\newcommand{\support}[1]{The author was supported by {#1}.}
\newcommand{\NSFThree}{NSF Grant DMS-1509652}
\newcommand{\Boxover}[1]{\underset{#1}{\Box}}
\newcommand{\Ind}{\big\uparrow} %Matching Mackey notation
\DeclareMathOperator{\Hom}{Hom}
\DeclareMathOperator{\Der}{Der}
\newcommand{\res}{res}
\newcommand{\tr}{tr}
\newcommand{\Tr}{tr}
\newcommand{\m}[1]{{\protect\underline{#1}}}
\newcommand{\mM}{\m{M}}
\newcommand{\mTr}{\m{\Tr}}
\newcommand{\cc}[1]{\mathcal #1}
\newcommand{\cC}{\cc{C}}
\newcommand{\cP}{\cc{P}}
\newcommand{\TAQ}{\textnormal{TAQ}}
\newcommand{\CoInd}{\textnormal{CoInd}}
\newcommand{\Set}{\mathcal Set}
\newcommand{\Ab}{\mathcal Ab}
\newcommand{\Tamb}{\mathcal Tamb}
\newcommand{\Mackey}{\mathcal Mackey}
\newcommand{\Mod}{\mathcal Mod}
\newcommand{\Mods}[1]{#1\mhyphen\Mod}
\newcommand{\STamb}{\m{S}\mhyphen\Tamb}
\newcommand{\SAb}{\m{S}\mhyphen\Ab}
\newcommand{\OmegaOneG}{\Omega^{1,G}_{\m{R}/\m{S}}}
\newcommand{\semidirect}{\ltimes}
\newcommand{\mC}{\m{\cC}}
\mathchardef\mhyphen=45
\numberwithin{equation}{section}
\newtheorem{theorem}{Theorem}[section]
\newtheorem{lemma}[theorem]{Lemma}
\newtheorem{corollary}[theorem]{Corollary}
\newtheorem{proposition}[theorem]{Proposition}
\newtheorem*{theorem*}{Theorem}
\theoremstyle{remark}
\newtheorem{remark}[theorem]{Remark}
\theoremstyle{definition}
\newtheorem{definition}[theorem]{Definition}
\newcommand{\defemph}[1]{\textbf{#1}}
\title[Equivariant Derivations]{On the Andr\'e-Quillen homology of Tambara functors}
\author{Michael A.~Hill}
\thanks{\support{\NSFThree}}
\begin{document}

\begin{abstract}
We lift to equivariant algebra three closely related classical algebraic concepts: abelian group objects in augmented commutative algebras, derivations, and K\"ahler differentials. We define Mackey functor objects in the category of Tambara functors augmented to a fixed Tambara functor $\underline{R}$, and we show that the usual square-zero extension gives an equivalence of categories between these Mackey functor objects and ordinary modules over $\underline{R}$. We then describe the natural generalization to Tambara functors of a derivation, building on the intuition that a Tambara functor has products twisted by arbitrary finite $G$-sets, and we connect this to square-zero extensions in the expected way. Finally, we show that there is an appropriate form of K\"ahler differentials which satisfy the classical relation that derivations out of $\underline{R}$ are the same as maps out of the K\"ahler differentials.
\end{abstract}

\maketitle

\section{Introduction}
Foundational work of Andr\'e and Quillen defined notions of homology and cohomology for commutative rings \cite{Andre67}, \cite{Quillen70}. This provided a natural way to understand the deformations of a commutative ring, connecting them to derivations, providing a condition for \'etale-ness, and building a natural long-exact sequence analogous to those from topology for a triple. Unpublished work of Kriz lifted this to structured ring spectra, showing that certain Postnikov invariants can be recast as Andr\'e-Quillen cohomology groups \cite{KrizTAQ}. Basterra extended this, producing the theory of topological Andr\'e-Quillen homology of a commutative ring spectrum \cite{Basterra99}. This work was then extended by Basterra-Mandell, who showed that $\TAQ$ with coefficients is essentially the only homology theory on commutative ring spectra and who explored the basics of spectrum objects in commutative ring spectra \cite{BasterraMandell}.

In the $G$-equivariant context for a finite group $G$, the role of abelian groups in non-equivariant algebra is played by Mackey functors. The category of Mackey functors is a closed symmetric monoidal category with symmetric monoidal product, the box product. In addition to the expected generalization of commutative rings to simply commutative monoids for the box product, there is a poset of generalizations of the notion of commutative rings to the $G$-equivariant context: the incomplete Tambara functors \cite{BHIncomplete}. These interpolate between Green functors, the ordinary commutative monoids for the box product, and Tambara functors \cite{Tambara}. The distinguishing feature for [incomplete] Tambara functors is the presence of certain multiplicative transfer maps, called norm maps. For a Green functor, we have no norm maps; for a Tambara functor, we have norm maps for any pair of subgroups $H\subset K$ of $G$.

This paper explores three closely related themes from classical commutative algebra in the setting of Tambara functors: square-zero extensions, derivations, and K\"ahler differentials.  Strickland initiated this study, showing that in stark contrast to the classical case, Quillen's abelian group objects in Tambara functors over a fixed Tambara functor $\m{R}$ properly contains the category of $\m{R}$-modules. In particular, the Andr\'e-Quillen homology groups are in general more complicated than simply the derived functors of derivations into an $\m{R}$-module. In this paper, we explain how to rectify this situation, showing that the correct analogue of the abelian group objects is the Mackey functor objects:

\begin{theorem*}
The square-zero extension gives an equivalence of categories between the category of $\m{R}$-modules and the category of Mackey functor objects in the category of $\m{S}$-Tambara functors augmented to $\m{R}$.
\end{theorem*}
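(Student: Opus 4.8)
The plan is to exhibit mutually inverse functors: the square-zero extension $\m{M} \mapsto \m{R}\oplus\m{M}$ from $\m{R}$-modules to Mackey functor objects, and the augmentation ideal functor $\m{A}\mapsto \m{I}_{\m{A}} = \ker(\m{A}\to\m{R})$ in the reverse direction. The additive (Green functor) structure on $\m{R}\oplus\m{M}$ is the direct sum of Mackey functors, with multiplication $(r,m)\cdot(r',m') = (rr', rm'+r'm)$ so that $\m{M}$ becomes a square-zero ideal. The content that is invisible classically but essential here is the specification of the norm maps $\norm_H^K$ on $\m{R}\oplus\m{M}$.

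First I would define these norms by the equivariant analogue of the formula $\norm(r+m)=\norm(r) + D\norm(r)(m)$, where the linear ``derivative'' term $D\norm_H^K(r)(m)$ is built from the dependent product structure of the Tambara functor: it is a transfer of a product in which one factor has been replaced by the module element $m$ and the remaining factors are built from $r$ by restriction. This is precisely the term for which the module transfers of $\m{M}$ are needed. I would then verify that $\m{R}\oplus\m{M}$ satisfies the Tambara functor axioms --- multiplicativity and the exponential/Frobenius reciprocity relations --- with the linear-in-$m$ truncation of each Tambara reciprocity identity reducing to an identity that holds because $\m{M}$ is an $\m{R}$-module. To see that $\m{R}\oplus\m{M}$ is a \emph{Mackey functor object} rather than merely an abelian group object, I would promote the zero section $\m{R}\to\m{R}\oplus\m{M}$ and the fold map to a product-preserving functor out of the Burnside category valued in augmented Tambara functors, using the restrictions and transfers of $\m{M}$ to supply the Mackey structure levelwise; product-preservation is immediate since $\m{R}\oplus(\m{M}\oplus\m{M}) \cong (\m{R}\oplus\m{M})\times_{\m{R}}(\m{R}\oplus\m{M})$.

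For the inverse, given a Mackey functor object $\m{A}\to\m{R}$ I would show its augmentation ideal $\m{I}_{\m{A}}$ is an $\m{R}$-module. The abelian group object structure forces $\m{I}_{\m{A}}^2=0$ (the multiplication on the ideal factors through the additive structure), so $\m{A}\cong \m{R}\oplus\m{I}_{\m{A}}$ as augmented Green functors, and the algebra structure endows $\m{I}_{\m{A}}$ with an $\m{R}$-action. The decisive point --- and the reason Mackey functor objects, not Quillen's abelian group objects, are the correct notion --- is that the internal Mackey structure equips $\m{I}_{\m{A}}$ with transfer and restriction maps natural for the module structure, and conversely that requiring this internal Mackey structure eliminates the additional abelian group objects Strickland exhibited. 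Concretely, the extra data an abelian group object may carry lives in the norm maps, and the Mackey object constraints pin these norms down to exactly the derivative formula above, so no freedom remains.

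The main obstacle I anticipate is twofold. Verifying that the derivative formula genuinely defines a Tambara functor requires checking the reciprocity relations, whose linearizations are combinatorially involved because the norm is a dependent rather than an ordinary product; organizing these as the linear part of the universal Tambara reciprocity identity is the technical heart. The conceptually harder step is proving that the two functors compose to the identity --- that the Mackey functor object structure records exactly the information of an $\m{R}$-module --- which amounts to showing that the internal Mackey structure on $\m{A}$ is faithfully captured by the restriction and transfer maps on $\m{I}_{\m{A}}$, with the norm contributing nothing beyond what these already determine.
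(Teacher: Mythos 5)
Your architecture coincides with the paper's: the square-zero extension (whose norms are exactly your ``derivative'' formula, equivalently trivial norms on the module summand) one way, the augmentation ideal the other way, with the forward direction handled essentially as the paper does by identifying $F\big(G/H,\m{R}\semidirect\mM\big)\cong\m{R}\semidirect\CoInd_H^Gi_H^\ast\mM$ levelwise. But there is a genuine gap at exactly the step you flag as ``the conceptually harder step.'' You assert that the Mackey functor object constraints ``pin these norms down to exactly the derivative formula, so no freedom remains,'' and later that this ``amounts to showing'' the norm contributes nothing beyond the restrictions and transfers---but this is a restatement of the goal, not an argument. It is also the entire mathematical content of the theorem: Strickland's extra abelian group objects already satisfy everything else in your outline (square-zero ideal, $\m{R}$-module structure on $\m{I}_{\m{A}}$, splitting $\m{A}\cong\m{R}\semidirect\m{I}_{\m{A}}$ as augmented Green functors). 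The abelian group object structure controls only products, hence only the \emph{restrictions} of norms, and nothing in your proposal explains what additional leverage the $G$-commutative monoid structure supplies to control the norms themselves.

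The paper closes this gap in two steps that your proposal would need or replace. First, because induction and coinduction of Mackey functors agree, a Mackey functor admits a \emph{unique} $G$-commutative monoid structure: the external transfer $\mTr_H^G\colon F(G/H,\m{B})\to\m{B}$ is determined by its adjoint, which is forced to be the identity. Consequently, for a Mackey functor object $\m{B}$, the external transfers coincide with the Mackey-theoretic transfers of the underlying Mackey functor \emph{and} are maps of Tambara functors. Second, given $a\in I(\m{B})(G/H)$, one applies this to $\mTr_H^G\colon F(G/H,\m{B})\to\m{B}$: this map is surjective at level $G/H$ (it splits the unit of the restriction--coinduction adjunction), while in $F(G/H,\m{B})$ the norm $N_H^G$ vanishes identically on the ideal, since $i_H^\ast G/H\cong \ast\amalg T$ decomposes $N_{G/H\to\ast}$ as $\mu\circ\big(Id\times N_{T\to\ast}\big)$ with $\mu$ the (trivial) multiplication on the ideal. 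Since the transfer is a Tambara map, $N_H^G$ of everything in its image---that is, of every $a$---is zero. Without this argument (or a substitute for it), the composite ``augmentation ideal, then square-zero extension'' is the identity only on those Mackey functor objects whose ideal happens to have trivial norms, and essential surjectivity of $\m{R}\semidirect(-)$ onto $\Mackey_{/\m{R}}$ is not established.
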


Classically, maps into a square-zero extension are classified by derivations, and with the appropriate notion, such a thing is true here. Classically, a derivation turns sums to products. We define below (Definition~\ref{def:GenuineDerivation}) a ``genuine derivation'' which plays the equivariant role, converting  twisted products (the norms) into twisted sums (the transfers).

\begin{theorem*}
The set of maps from an $\m{S}$-Tambara functor $\m{C}$ augmented to $\m{R}$ to a square-zero extension $\m{R}\semidirect\mM$ is naturally isomorphic to the set of genuine $\m{S}$-derivations of $\m{C}$ into $\mM$.
\end{theorem*}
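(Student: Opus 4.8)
The plan is to mimic the classical argument identifying augmented algebra maps into a square-zero extension with derivations, reading off all the structure orbit by orbit and then checking that each layer of the Tambara structure (addition, restriction, transfer, multiplication, and norm) contributes exactly one clause of the definition of a genuine $\m{S}$-derivation. The novelty is entirely in the last of these.

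First I would unpack a morphism $\phi \colon \m{C} \to \m{R} \semidirect \mM$ in the category of $\m{S}$-Tambara functors over $\m{R}$. Since $\m{R} \semidirect \mM$ sits over $\m{R}$ via the projection, the requirement that $\phi$ be a map over $\m{R}$ forces the first coordinate of $\phi$ to be the augmentation $\varepsilon \colon \m{C} \to \m{R}$. Hence, evaluating on each orbit $G/H$, the map $\phi$ is determined by its second coordinate, a collection of maps $D_{G/H} \colon \m{C}(G/H) \to \mM(G/H)$, and I would set $D := \mathrm{pr}_{\mM} \circ \phi$. The content of the theorem is then that $\phi$ is a map of $\m{S}$-Tambara functors precisely when $D$ is a genuine $\m{S}$-derivation, so that $\phi \mapsto D$ and $D \mapsto (\varepsilon, D)$ are mutually inverse and natural in $\m{C}$ and in $\mM$.

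Second, I would dispatch the additive and multiplicative clauses, which are the orbit-wise shadow of the classical computation. In $\m{R} \semidirect \mM$ addition is the direct sum and the transfers $\tr_H^K$ and restrictions $\res_H^K$ act diagonally (additively), because the underlying Mackey functor is $\m{R} \oplus \mM$; so $\phi$ commutes with addition, restriction, and transfer if and only if $D$ is additive and commutes with restriction and transfer, i.e.\ $D$ is a map of the underlying Mackey functors. Multiplication in $\m{R} \semidirect \mM$ is the square-zero product $(r,m)(r',m') = (rr', r m' + r' m)$, so comparing the $\mM$-coordinates of $\phi(cc')$ and $\phi(c)\phi(c')$ yields exactly the Leibniz rule $D(cc') = \varepsilon(c)\, D(c') + \varepsilon(c')\, D(c)$ at each level. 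Compatibility with the unit and with the structure maps out of $\m{S}$ is precisely the requirement that $D$ be an $\m{S}$-derivation rather than merely a Green-functor derivation.

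The main obstacle, and the genuinely equivariant step, is the clause coming from the norm maps, and everything hinges on first establishing the formula for $N_H^K$ on a square-zero extension. Thinking of $(r,m)$ as $r + m$ with $m$ nilpotent of square zero, the norm $N_H^K(r+m)$ is a twisted product over the $K/H$-indexing set, and its first-order (linear in $m$) part is the twisted sum of the partial norms with a single factor replaced by $m$; this first-order term is naturally a transfer, so that $N_H^K(r,m) = \bigl(N_H^K(r),\, \Tr(\text{derivative term in } m)\bigr)$. I expect the careful part to be writing this derivative term correctly as a transfer from the appropriate subgroup, using the exponential-diagram description of the norm, and checking it is well defined. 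Granting this, comparing the $\mM$-coordinates of $\phi(N_H^K c)$ and $N_H^K \phi(c)$ produces exactly the identity converting norms (twisted products) into transfers (twisted sums) that defines a genuine derivation in Definition~\ref{def:GenuineDerivation}. Once the norm formula is in hand, the equivalence of the two sets of axioms is a matter of matching terms, and naturality in $\m{C}$ and $\mM$ is immediate from the coordinatewise description.
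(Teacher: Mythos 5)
Your overall strategy coincides with the paper's: split a map $\phi\colon\m{C}\to\m{R}\semidirect\mM$ over $\m{R}$ into $(\epsilon,D)$, observe that the additive and Green-functor clauses reduce to the classical square-zero argument, and isolate the norm compatibility as the one genuinely equivariant condition. The gap is that the step you defer (``granting this\ldots'') is the entire mathematical content of the theorem, and the heuristic you offer in its place is precisely the reasoning that fails for Tambara functors. You argue that $N_H^K(r+m)$ keeps only its part linear in $m$ ``because $m$ is nilpotent of square zero.'' But Strickland's observation, which motivates this whole paper, is that in the equivariant setting vanishing products do \emph{not} control norms: $\mM\cdot\mM=0$ forces the \emph{restrictions} of a norm $N_H^K(m)$ to vanish (these are products of conjugates), not the norm itself, which could for instance be supported on transfers. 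This is exactly why abelian group objects in $\STamb_{/\m{R}}$ are strictly larger than $\m{R}$-modules. So a first-order Taylor expansion justified by nilpotence alone proves nothing; the higher-degree terms must be killed by a different mechanism.

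What the paper does to close this gap has three parts, none of which appears in your proposal. First, it refines the norm-of-a-sum exponential diagram into homogeneous pieces $T_k\subset F(G/H,\{0,1\})$ graded by degree (Proposition~\ref{prop:NormofSumII}), so that $N_H^G\big(a+d(a)\big)$ becomes a finite sum of terms $T_{f_k}N_{g_k}R_{h_k}\big(a,d(a)\big)$. Second, it shows that for $k\geq 2$ the map $g_k$ restricted to the part carrying $d(a)$ is a $k$-fold covering (Proposition~\ref{prop:CoveringSpaces}), hence a surjection that is not an isomorphism, and then invokes the fact that the square-zero extension is \emph{defined} to have trivial norms on the module summand along such maps; it is this convention built into $\m{R}\semidirect\mM$, not nilpotence of $\mM$, that annihilates the terms with $k\geq 2$. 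Third, the surviving $k=1$ term must be computed explicitly: $T_1\cong G/H$, the restriction becomes $R_{d_1}$ along the complement of the diagonal in $G/H\times G/H$, the norm becomes $N_{d_2}$, and the transfer becomes $\tr_H^G$, yielding
\[
d\big(N_H^G(a)\big)=\tr_H^G\big(N_{d_2}R_{d_1}(a)\cdot d(a)\big),
\]
which is clause (2) of Definition~\ref{def:GenuineDerivation}. Without these steps your proposal is a correct plan, with the right reduction and the right intuition that norms should turn into transfers, but it is not yet a proof, and the one justification it does offer for the crucial vanishing would not survive scrutiny.
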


Finally, there is an $\m{R}$-module of genuine K\"ahler differentials (Definition~\ref{def:GenuineKahler}) which receives the universal genuin $S$-derivation from $\m{R}$.

\begin{theorem*}
There is an $\m{R}$-module $\OmegaOneG$ and a universal genuine $\m{S}$-derivation $d\colon\m{R}\to\OmegaOneG$. This has the property that genuine $S$-derivations from $\m{R}$ to an $\m{R}$-module $\mM$ are in natural bijective correspondence with $\m{S}$-module maps $\OmegaOneG\to\mM$.
\end{theorem*}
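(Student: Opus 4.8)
The plan is to prove this as a representability statement, exactly as in the classical theory: the functor $\mM\mapsto\Der_{\m{S}}(\m{R},\mM)$ on $\m{R}$-modules is represented by an object built by generators and relations, and the universal property is verified by constructing the comparison maps by hand. By the preceding theorem, a genuine derivation $\m{R}\to\mM$ is the same datum as an augmented Tambara map $\m{R}\to\m{R}\semidirect\mM$ over $\m{R}$, so the content here is precisely that this derivation functor is corepresentable on the category of $\m{R}$-modules. I would first recall the construction of $\OmegaOneG$ from Definition~\ref{def:GenuineKahler}: it is the $\m{R}$-module presented by symbols $dr$, one for each element of the underlying Mackey functor of $\m{R}$, modulo relations encoding each clause in the definition of a genuine derivation—additivity, the Leibniz rule for the box-product multiplication, compatibility with restrictions and transfers, and, crucially, the norm-to-transfer Leibniz relation $d(N_H^K x)=\tr_H^K\bigl(w_{H,K}(x)\cdot dx\bigr)$ for the appropriate weight $w_{H,K}(x)\in\m{R}(H)$. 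The first task is to exhibit the tautological $d\colon\m{R}\to\OmegaOneG$, $r\mapsto dr$, and to check it is itself a genuine $\m{S}$-derivation; since the presentation imposes each derivation axiom as a defining relation, this is essentially formal once the presentation is set up correctly.

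For the bijection I would build the two maps explicitly. In one direction an $\m{R}$-module map $\phi\colon\OmegaOneG\to\mM$ is sent to $\phi\circ d$; this is a genuine derivation because each defining identity (Leibniz, transfer, restriction, and the norm relation) is preserved under post-composition with a map that is additive, commutes with transfers and restrictions, and respects the $\m{R}$-action, and $d$ already satisfies all of them. In the other direction, given a genuine derivation $D\colon\m{R}\to\mM$, I would define $\widehat{D}\colon\OmegaOneG\to\mM$ on generators by $\widehat{D}(dr)=D(r)$ and extend by $\m{R}$-linearity. The substance is well-definedness: one must check $\widehat D$ annihilates every relator in the presentation of $\OmegaOneG$. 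Each relator corresponds to one axiom of a genuine derivation, so the verification reduces to reading off that $D$, being genuine, satisfies exactly the identities required; most are immediate, and the norm relator is precisely the norm-to-transfer clause of Definition~\ref{def:GenuineDerivation}.

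It then remains to see that these assignments are mutually inverse and natural in $\mM$. On generators $\widehat D\circ d$ sends $r\mapsto D(r)$, so $\widehat D\circ d=D$; conversely $\widehat{\phi\circ d}$ agrees with $\phi$ on the generators $dr$, and since these generate $\OmegaOneG$ as an $\m{R}$-module, $\widehat{\phi\circ d}=\phi$. Naturality in $\mM$ follows directly from the formulas $\phi\mapsto\phi\circ d$ and $D\mapsto\widehat D$, which are manifestly compatible with $\m{R}$-module maps $\mM\to\mM'$.

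I expect the main obstacle to be the norm-to-transfer relation, in two places. First, verifying that the universal $d$ genuinely satisfies it requires showing that the symbol of a norm is forced, through the Tambara structure, to be a transfer of a weighted differential; pinning down the weight $w_{H,K}$ so that it is compatible with the double-coset decomposition of $\res_H^K N_H^K$ and with Frobenius reciprocity for the $\m{R}$-action is the delicate point. Second, the well-definedness of $\widehat D$ on this relation must be reconciled with $\m{R}$-linearity: one has to confirm that the projection (Frobenius) formulas hold in $\mM$ so that extending $dr\mapsto D(r)$ $\m{R}$-linearly does not collide with the transfer relations. Once these compatibilities are secured, the remaining bookkeeping for additivity, restriction, and transfer follows the classical pattern with only notational overhead.
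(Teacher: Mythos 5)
Your argument is essentially sound, but it is not the paper's proof, and it rests on a misremembered definition: Definition~\ref{def:GenuineKahler} does \emph{not} present $\OmegaOneG$ by generators $dr$ and relations. The paper takes $\m{I}$ to be the kernel of the multiplication map $\m{R}\Box_{\m{S}}\m{R}\to\m{R}$ and defines $\OmegaOneG:=\m{I}/\m{I}^{>1}$, where $\m{I}^{>1}$ is the submodule of genuine equivariant decomposables (images of the norms $N^T$ with $|T|>1$, as well as products), with $d$ the difference of the two unit maps $\m{R}\to\m{R}\Box_{\m{S}}\m{R}$. The universal property is then \emph{proved} rather than built in: the splitting of $0\to\m{I}\to\m{R}\Box_{\m{S}}\m{R}\to\m{R}\to 0$ identifies $(\m{R}\Box_{\m{S}}\m{R})/\m{I}^{>1}$ with $\m{R}\semidirect\OmegaOneG$ as Tambara functors augmented over $\m{R}$, so $d$ is a genuine $\m{S}$-derivation because both units are Tambara maps; conversely, a genuine derivation $D\colon\m{R}\to\mM$ gives the augmented Tambara map $Id\semidirect D\colon\m{R}\to\m{R}\semidirect\mM$, which after extension of scalars to $\m{R}\Box_{\m{S}}\m{R}$ carries $\m{I}$ into $\mM$ and kills $\m{I}^{>1}$ since $\mM$ is equivariantly square-zero, hence descends to the desired $\m{R}$-module map $\OmegaOneG\to\mM$; uniqueness holds because $\OmegaOneG$ is generated by the image of $d$.

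Your presentation-based route is a legitimate alternative: free $\m{R}$-modules on Mackey functors exist, quotients by the sub-$\m{R}$-module generated by the relators exist, and the adjunction $\Hom_{\m{R}}(\m{R}\Box\m{X},\mM)\cong\Hom(\m{X},\mM)$ makes the bijection formal, exactly as you say. (Your flagged ``main obstacle'' of pinning down the weight $w_{H,K}$ is not an obstacle at all: Definition~\ref{def:GenuineDerivation} fixes it as $N_{d_2}R_{d_1}(a)$, and the relator simply copies that formula; post-composition and $\m{R}$-linear extension then kill it tautologically.) What your route buys is brevity and formality of the universal property; what it loses is any structural handle on $\OmegaOneG$. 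The conormal model $\m{I}/\m{I}^{>1}$ is what the rest of the paper actually uses — for instance, localizations $\m{R}\to\m{R}[S^{-1}]$ are shown to be formally \'etale by proving $\m{I}=0$ outright, and the generation-by-$d$ statement is proved from that model. Moreover, since the theorem as stated refers to the module of Definition~\ref{def:GenuineKahler}, a complete write-up along your lines would still owe the comparison isomorphism between your presented module and $\m{I}/\m{I}^{>1}$; this is immediate from uniqueness of representing objects, but only after the paper's universal property for $\m{I}/\m{I}^{>1}$ — the very thing being proved — is established.
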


\subsection*{Notational Conventions}
In this paper, $G$ will always denote a finite group. We will usually reserve the letters $H$ and $K$ for subgroups of $G$. Additionally, we will denote coefficient systems, Mackey functors, Tambara functors, and related constructions with underlined capital Roman letters to distinguish them from the non-equivariant objects.

\subsection*{Acknowledgements}
We thank Andrew Blumberg and Tyler Lawson for several helpful conversations and for their careful reading of earlier drafts.

\section{Brief review of Tambara functors}
\subsection{Ordinary Tambara functors}

\begin{definition}
Let $\cP^G$ denote the category of polynomials in $G$-sets. The objects are finite $G$-sets, and the morphisms are isomorphism classes of diagrams
\[
S\xleftarrow{f} U\xrightarrow{g} V\xrightarrow{h} T,
\]
where two such diagrams are isomorphic if we have a commutative diagram
\[
\xymatrix@R=.1\baselineskip{
{} & {U'}\ar[r]^{g'}\ar[dd]_{\cong}\ar[ld]_{f'} & {V'}\ar[dd]^{\cong}\ar[rd]^{h'} & {} \\
{S} & {} & {} & {T.} \\
{} & {U}\ar[r]_{g}\ar[lu]^{f} & {V}\ar[ru]_{h} & {}
}
\]
\end{definition}
Composition in this category is a bit trickier to describe, so it is convenient to name a generating collection of morphisms and then describe their commutation relations.
\begin{definition}
Let $f\colon S\to T$ be a map of finite $G$-sets. Then let
\begin{align*}
R_f&:=[T\xleftarrow{f} S\xrightarrow{=} S\xrightarrow{=}S] \\
N_f&:=[S\xleftarrow{=} S\xrightarrow{f} T\xrightarrow{=}T] \\
T_f&:=[S\xleftarrow{=} S\xrightarrow{=} S\xrightarrow{f} T]
\end{align*}
\end{definition}

Then any polynomial can be written as a composite of these:
\[
T_h\circ N_g\circ R_f=[S\xleftarrow{f} U\xrightarrow{g} V\xrightarrow{h} T].
\]
These have the following relations.
\begin{proposition}
$R$ gives a contravariant functor from $\Set^G$ into $\cP^G$. $N$ and $T$ give covariant ones.
\end{proposition}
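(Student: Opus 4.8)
The plan is to verify the two functor axioms---preservation of identities and of composition---for each of the three assignments $R$, $N$, and $T$, reducing each composition law to a standard fact about pullbacks, dependent sums, and dependent products of finite $G$-sets. Preservation of identities is immediate: for any finite $G$-set $S$ the three diagrams $R_{\id_S}$, $N_{\id_S}$, $T_{\id_S}$ all coincide with $[S\xleftarrow{=}S\xrightarrow{=}S\xrightarrow{=}S]$, which is the identity morphism of $S$ in $\cP^G$.

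For composition, fix composable $G$-maps $f\colon S\to T$ and $g\colon T\to W$. The three laws to check are $R_f\circ R_g=R_{g\circ f}$ (contravariance of $R$), together with $N_g\circ N_f=N_{g\circ f}$ and $T_g\circ T_f=T_{g\circ f}$ (covariance of $N$ and $T$); note that the order-reversal in the first law is exactly the contravariance. The reason each holds is transparent under the dictionary sending a polynomial $S\xleftarrow{s}U\xrightarrow{p}V\xrightarrow{t}T$ to the functor $t_!\circ\prod_p\circ s^*\colon\Set^G/S\to\Set^G/T$: because two of the three legs are identities in each case, $R_f$ corresponds to the pullback $f^*$, $N_f$ to the dependent product $\prod_f$, and $T_f$ to the dependent sum $f_!$. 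The three laws then become $f^*\circ g^*=(g\circ f)^*$ (the pasting law for pullbacks), $\prod_g\circ\prod_f=\prod_{g\circ f}$ (functoriality of dependent products), and $g_!\circ f_!=(g\circ f)_!$ (composition of dependent sums). Each right-hand side is again a polynomial of the special form with two identity legs, namely $R_{g\circ f}$, $N_{g\circ f}$, and $T_{g\circ f}$.

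The main obstacle is that composition in $\cP^G$ is defined not as composition of functors but combinatorially, through pullbacks together with the exponential (distributivity) diagrams, so the genuine content is to show that this recipe degenerates to the plain pullback, product, or sum above when two legs of each factor are identities. I would handle this by writing out the exponential diagram defining each composite $R_f\circ R_g$, $N_g\circ N_f$, and $T_g\circ T_f$ and observing that the squares governing the ``cross terms'' become trivial---their structure maps are identities or canonical isomorphisms---precisely because the middle and outer legs of the factors are identities. What then survives is exactly the single pullback square, the composite of sum legs, or the composite of product legs, giving the claimed equalities of isomorphism classes of diagrams.
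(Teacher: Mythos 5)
Your proof is correct: preservation of identities is immediate, and the three composition laws reduce, as you argue, to the pasting law for pullbacks, composition of dependent sums, and composition of dependent products $\prod_g\circ\prod_f=\prod_{g\circ f}$, with the combinatorial composition in $\cP^G$ degenerating to these because two of the three legs of each generator are identities. Note that the paper itself states this proposition with no proof at all---it is part of the review of Tambara's foundational theory---so your argument simply supplies the standard verification that the paper implicitly cites, and your honest flagging of the one genuine subtlety (that composition in $\cP^G$ is defined combinatorially via exponential diagrams, not as composition of the associated functors) is exactly the right point to be careful about.
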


\begin{proposition}
If we have a pullback diagram of finite $G$-sets
\[
\xymatrix{
{S'}\ar[r]^{f'}\ar[d]_{g'} & {T'}\ar[d]^{g} \\
{S}\ar[r]_{f} & {T,}
}
\]
then we have
\[
R_g\circ N_f=N_{f'}\circ R_{g'}\text{ and }R_g\circ T_f=T_{f'}\circ R_{g'}.
\]
\end{proposition}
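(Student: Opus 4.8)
The plan is to prove both identities directly from the composition law in $\cP^G$, exploiting the fact that post-composition with a pure restriction morphism $R_g$ is computed by pulling back the other polynomial along $g$. First I would record the two target morphisms in the canonical $T\circ N\circ R$ form. Since $R_{g'}=[S\xleftarrow{g'}S'\xrightarrow{=}S'\xrightarrow{=}S']$, $N_{f'}=[S'\xleftarrow{=}S'\xrightarrow{f'}T'\xrightarrow{=}T']$, and $T_{f'}=[S'\xleftarrow{=}S'\xrightarrow{=}S'\xrightarrow{f'}T']$, the right-hand composites are already in canonical form, so by the identity $T_h\circ N_g\circ R_f=[S\xleftarrow{f}U\xrightarrow{g}V\xrightarrow{h}T]$ recorded above,
\[
N_{f'}\circ R_{g'}=[S\xleftarrow{g'}S'\xrightarrow{f'}T'\xrightarrow{=}T'],\qquad T_{f'}\circ R_{g'}=[S\xleftarrow{g'}S'\xrightarrow{=}S'\xrightarrow{f'}T'].
\]

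Next I would compute the left-hand sides from the composition law. Both $N_f=[S\xleftarrow{=}S\xrightarrow{f}T\xrightarrow{=}T]$ and $T_f=[S\xleftarrow{=}S\xrightarrow{=}S\xrightarrow{f}T]$ reach $T$ through the single nonidentity leg $f$, while $R_g$ restricts along $g\colon T'\to T$. For this special shape, unwinding the definition of composition shows that post-composing with $R_g$ is iterated base change: one pulls back each leg along $g$, the identity legs pull back to identities, and the only nontrivial square is the pullback of $f$ against $g$. By hypothesis this is the given pullback square, producing $S'$ together with $f'\colon S'\to T'$ and $g'\colon S'\to S$. Reading off the resulting bispans then yields exactly the two canonical forms displayed above, so $R_g\circ N_f=N_{f'}\circ R_{g'}$ and $R_g\circ T_f=T_{f'}\circ R_{g'}$.

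Conceptually, I would package this via the faithful semantics sending a finite $G$-set $A$ to the slice $\Set^G_{/A}$ and a polynomial $[A\xleftarrow{a}U\xrightarrow{b}V\xrightarrow{c}B]$ to the polynomial functor $\Sigma_c\circ\Pi_b\circ\Delta_a$, where $\Delta$, $\Pi$, $\Sigma$ denote pullback and its right and left adjoints. Under this assignment $R_g\mapsto\Delta_g$, $N_f\mapsto\Pi_f$, and $T_f\mapsto\Sigma_f$, so the two identities become the Beck--Chevalley isomorphisms $\Delta_g\Pi_f\cong\Pi_{f'}\Delta_{g'}$ and $\Delta_g\Sigma_f\cong\Sigma_{f'}\Delta_{g'}$ attached to the pullback square. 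Since the category of finite $G$-sets sits in the presheaf topos of all $G$-sets and is closed under the relevant dependent products and sums (all fibers being finite), it is locally cartesian closed, so both transformations are isomorphisms; faithfulness of the semantics transports this back to the desired equalities in $\cP^G$.

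The hard part will not be the pullback geometry, which is forced by the universal property, but rather pinning down the composition law itself: the text defers the description of composition in $\cP^G$ as the ``trickier'' ingredient, and the crux is to justify carefully that composing on the left with a pure $R_g$ really is computed by base change---equivalently, that the identity legs of $N_f$ and $T_f$ pull back to identities so that only the $f$-leg contributes a nontrivial square. Once that reduction is secured, the remaining verification is a routine matching of isomorphism classes of diagrams, and the Beck--Chevalley reformulation serves as an independent confirmation that the two sides agree.
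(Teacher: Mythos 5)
Your proof is correct in substance, but note that the paper itself offers no proof of this proposition: it appears in the review section as a recorded fact about Tambara's category $\cP^G$, with the composition law deliberately left unspecified, so there is no internal argument to compare against. Your first computation --- putting both right-hand composites in the canonical $T\circ N\circ R$ form and identifying left-composition with $R_g$ as base change of the entire bispan --- is exactly how this relation is established in the sources the paper is summarizing, and your Beck--Chevalley reformulation via the semantics $[A\leftarrow U\to V\to B]\mapsto \Sigma\circ\Pi\circ\Delta$ on slice categories is the standard conceptual explanation of why the relation must hold. Two caveats, both of which you flag yourself: (i) the claim that composing on the left with a pure $R_g$ is iterated base change is not a tautology but must be read off from the combinatorial definition of composition; it works because the norm and transfer legs of $R_g$ are identities, so the exponential-diagram (distributor) step in the composition recipe degenerates, leaving only pullbacks, and the identity legs of $N_f$ and $T_f$ indeed pull back to identities; (ii) in the semantic argument, transporting the isomorphisms $\Delta_g\Pi_f\cong\Pi_{f'}\Delta_{g'}$ and $\Delta_g\Sigma_f\cong\Sigma_{f'}\Delta_{g'}$ back to equalities in $\cP^G$ requires that a bispan be determined up to isomorphism by its polynomial functor; this faithfulness is true but is itself a theorem about polynomial functors rather than a formality, so the semantic route is an independent confirmation only once that representation result is invoked. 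With either of those inputs secured, your argument is complete and matches the standard treatment.
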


The interchange of $N$ and $T$ is trickier. Recall that if $f\colon S\to T$ is a map of finite $G$-sets, then the pullback functor
\[
f^\ast\colon\Set^G_{\downarrow T}\to\Set^G_{\downarrow S}
\]
has a right adjoint: the dependent product $\prod_{f}$. 
\begin{definition}
An exponential diagram in $\Set^G$ is a diagram (isomorphic to one) of the form
\[
\xymatrix{
{S}\ar[d]_{h} & {A}\ar[l]_g & {S\times_{T}\prod_h A}\ar[l]_-{f'}\ar[d]^{g'} \\
{T} & & {\prod_h A.}\ar[ll]^{h'}
}
\]
\end{definition}

\begin{proposition}
If we have an exponential diagram
\[
\xymatrix{
{S}\ar[d]_{g} & {A}\ar[l]_h & {S\times_{T}\prod_g A}\ar[l]_-{f'}\ar[d]^{g'} \\
{T} & & {\prod_g A,}\ar[ll]^{h'}
}
\]
then 
\[
N_g\circ T_h=T_{h'}\circ N_{g'}\circ R_{f'}.
\]
\end{proposition}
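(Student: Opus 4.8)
The plan is to pass to the standard representation of $\cP^G$ by polynomial functors on slice categories, under which the asserted relation becomes the distributivity isomorphism expressing a dependent product of a dependent sum as a dependent sum of a dependent product. For a finite $G$-set $X$ write $\Set^G_{\downarrow X}$ for the slice category; a map $f\colon X\to Y$ induces the pullback $f^\ast$, its left adjoint $f_!$ given by postcomposition with $f$, and its right adjoint $\prod_f$. Sending $X$ to $\Set^G_{\downarrow X}$ and a polynomial $[X\xleftarrow{a}U\xrightarrow{b}V\xrightarrow{c}Y]$ to $c_!\circ\prod_b\circ a^\ast$ is the faithful representation through which composition in $\cP^G$ is encoded; in particular $R_f\mapsto f^\ast$, $N_f\mapsto\prod_f$, and $T_f\mapsto f_!$. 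Under this dictionary the identity $N_g\circ T_h=T_{h'}\circ N_{g'}\circ R_{f'}$ is equivalent to a natural isomorphism of functors $\Set^G_{\downarrow A}\to\Set^G_{\downarrow T}$,
\[
\prod_g\circ h_!\;\cong\;h'_!\circ\prod_{g'}\circ(f')^\ast,
\]
and it suffices to construct this.

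To do so I would evaluate both functors on an arbitrary $p\colon X\to A$ and compare the resulting $G$-sets over $T$. Setting $q=h\circ p\colon X\to S$, the total space of $\prod_g h_!(X)$ is
\[
\{(t,\tau)\mid t\in T,\ \tau\colon g^{-1}(t)\to X,\ q\circ\tau=\iota_{g^{-1}(t)}\},
\]
with $G$ acting diagonally, where $\iota$ denotes the inclusion $g^{-1}(t)\hookrightarrow S$. For the other side, unwinding the exponential diagram gives the evaluation formula $f'(s,\sigma)=\sigma(s)$ and the identification $(g')^{-1}(\sigma)\cong g^{-1}(h'(\sigma))$; hence the fiber of $\prod_{g'}(f')^\ast(X)$ over a section $\sigma\in\prod_g A$ lying over $t$ is the set of $\tau\colon g^{-1}(t)\to X$ with $p\circ\tau=\sigma$, and $h'_!$ reindexes these over $(h')^{-1}(t)$, the set of sections of $h$ above $g^{-1}(t)$.

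The crucial point is that on the right-hand side the section $\sigma$ is \emph{redundant}: a pair $(\sigma,\tau)$ with $h\circ\sigma=\iota$ and $p\circ\tau=\sigma$ is determined by $\tau$ alone, since then $\sigma=p\circ\tau$ and the section condition $h\circ\sigma=\iota$ becomes exactly $q\circ\tau=\iota$. Eliminating $\sigma$ therefore identifies the two total spaces over $T$ by a bijection that is manifestly natural in $X$ and commutes with the $G$-action, which is the desired natural isomorphism.

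The main obstacle I anticipate is purely in the bookkeeping of the exponential diagram: correctly reading off $f'$ as the evaluation map and computing the fibers of $g'$, and then confirming that the fiberwise bijection is $G$-equivariant rather than merely a bijection of underlying sets. Phrasing the comparison in terms of the total spaces over $T$, as above, makes the $G$-action transparent, and once $f'$ has been pinned down the collapse of the redundant data $\sigma$ is exactly what forces the two sides to agree; the remaining naturality verifications are routine.
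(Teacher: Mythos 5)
Your proof is correct: the identification of both sides as functors $\Set^G_{\downarrow A}\to\Set^G_{\downarrow T}$, the fiberwise computation of the dependent products, and the key observation that the section $\sigma$ is redundant (so that $(\sigma,\tau)\mapsto\tau$ gives a $G$-equivariant bijection over $T$, natural in $X$) are all accurate. Note that the paper itself states this proposition without proof, as part of its review of Tambara's category $\cP^G$; your argument is essentially the standard one from the source literature, where composition of polynomials is defined precisely through the slice-category representation $[X\leftarrow U\to V\to Y]\mapsto c_!\circ\prod_b\circ a^\ast$ you invoke, so the one background fact you lean on (faithfulness of that representation, so that a natural isomorphism of functors yields the identity of morphisms in $\cP^G$) is exactly what that definition of composition supplies.
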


With these morphisms, the disjoint union of finite $G$-sets becomes the product in the category $\cP^G$.

\begin{definition}
A semi-Tambara functor is a product preserving functor $\cP^G\to\Set$. A Tambara functor is a semi-Tambara functor $\m{R}$ for which $\m{R}(T)$ is group-complete for all $T\in\Set^G$.
\end{definition}
Tambara showed that the group-completion functor can be applied to any semi-Tambara functor, giving a Tambara functor.

There are several related categories of polynomials which give other flavors of Tambara functors. Recall that a subgraph of a category $\cC$ is ``wide'' if it contains all of the objects.
\begin{definition}
Inside the category $\cP^G$ are three important wide sub-graphs:
\begin{enumerate}
\item $\cP^{G}_{Iso}$ where the map $g$ in a polynomial is an isomorphism,
\item $\cP^{G}_{Epi}$ where the map $g$ in a polynomial is an epimorphism, and
\item $\cP^{G}_{gr}$ where the map $g$ in a polynomial preserves isotropy in the sense that for all $u\in U$, the stabilizer of $g(u)$ is that of $u$.
\end{enumerate}
\end{definition}

\begin{proposition}[{\cite[Prop. 2.12]{BHIncomplete}}]
The subgraphs $\cP^{G}_{Iso}$, $\cP^{G}_{Epi}$, and $\cP^{G}_{gr}$ are subcategories of $\cP^{G}$ in which the disjoint union of finite $G$-sets is the product.
\end{proposition}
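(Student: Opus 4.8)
The plan is to reduce both assertions to three closure properties of the relevant classes of ``middle maps'' $g$, since whether a polynomial $[S \xleftarrow{f} U \xrightarrow{g} V \xrightarrow{h} T]$ lies in one of the three subgraphs depends only on $g$. Write $\mathcal{G}$ for one of the three classes of maps of finite $G$-sets under consideration: the isomorphisms, the epimorphisms, or the isotropy-preserving maps. I will show that each such $\mathcal{G}$ contains all identities and is closed under (i) composition, (ii) base change (pullback along an arbitrary map), and (iii) finite disjoint union; from these facts everything follows formally.

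First, the subgraph property. Each subgraph contains every identity, since $\mathrm{id}_S$ has middle map the identity, which lies in every $\mathcal{G}$. For closure under composition I would take two polynomials in normal form $T_{h_2}N_{g_2}R_{f_2}$ and $T_{h_1}N_{g_1}R_{f_1}$ and push their composite back to normal form using the stated relations: the pullback relations $R_g\circ N_f = N_{f'}\circ R_{g'}$ and $R_g\circ T_f = T_{f'}\circ R_{g'}$ relocate each $R$ past the $N$'s and $T$'s, the exponential relation $N_g\circ T_h = T_{h'}\circ N_{g'}\circ R_{f'}$ moves $N_{g_2}$ past $T_{h_1}$, and functoriality of $R$, $N$, $T$ collapses the repeated symbols. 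Tracking the middle map through this computation, the crucial observation is that in every relation the new $N$-symbol carries a base change of the old middle map: in the exponential relation the new $g'$ is precisely the projection in the pullback square $S\times_T \prod_g A\to \prod_g A$, and in $R_g\circ N_f = N_{f'}\circ R_{g'}$ the map $f'$ is a base change of $f$. After collapsing $N_{g_2'}\circ N_{g_1'} = N_{g_2' g_1'}$, the composite has middle map $g_2' g_1'$, a composite of base changes of $g_2$ and $g_1$. Hence closure of $\mathcal{G}$ under composition and base change gives that the composite again lies in the subgraph. I expect this bookkeeping to be the main obstacle, precisely because one must invoke the exponential relation and confirm that the newly produced middle maps are genuinely base changes rather than arbitrary new maps.

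Next I would verify (i)--(iii) for each class. For isomorphisms and epimorphisms these are standard: both are closed under composition, the pullback of a surjection (resp.\ isomorphism) along any map is again a surjection (resp.\ isomorphism), and a disjoint union of surjections (resp.\ isomorphisms) is one. The isotropy-preserving class needs a short argument only for base change: if $g\colon S\to T$ preserves isotropy and $\beta\colon T'\to T$ is arbitrary, then for $(s,t')$ in the pullback $S\times_T T'$ we have $g(s)=\beta(t')$, so $\Stab(t')\subseteq \Stab(\beta(t')) = \Stab(g(s)) = \Stab(s)$; therefore $\Stab(s,t') = \Stab(s)\cap\Stab(t') = \Stab(t')$, which is exactly the isotropy of its image under the projection $S\times_T T'\to T'$. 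Closure under composition and disjoint union for this class is immediate from the componentwise computation of stabilizers.

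Finally, for the product claim I would use that disjoint union is already the product in $\cP^G$ and check that the relevant morphisms stay inside the subgraph. The projection $S\sqcup S'\to S$ is $R_{\iota_S}$ for the inclusion $\iota_S\colon S\hookrightarrow S\sqcup S'$, whose middle map is an identity and hence lies in $\mathcal{G}$. Given a cone consisting of $a=[W\xleftarrow{f_a}U_a\xrightarrow{g_a}V_a\xrightarrow{h_a}S]$ and $b=[W\xleftarrow{f_b}U_b\xrightarrow{g_b}V_b\xrightarrow{h_b}S']$ in the subgraph, the mediating morphism in $\cP^G$ is the ``disjoint union'' polynomial whose middle map is $g_a\sqcup g_b$; one checks via the pullback relations that restricting along $\iota_S$ and $\iota_{S'}$ recovers $a$ and $b$. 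Since $\mathcal{G}$ is closed under disjoint union, this mediating morphism lies in the subgraph, and its uniqueness is inherited from $\cP^G$ because the subgraph inclusion is faithful. The empty $G$-set remains terminal by the same reasoning, so all finite products are computed by disjoint unions within each subcategory.
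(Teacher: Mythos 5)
This proposition is quoted from Blumberg--Hill (\cite[Prop.\ 2.12]{BHIncomplete}); the paper itself supplies no proof, so there is nothing internal to compare against. Your argument is correct and is essentially the argument of the cited source: membership of a polynomial in each subgraph depends only on the middle map $g$, and each of the three classes (isomorphisms, epimorphisms, isotropy-preserving maps) is wide, closed under composition, stable under pullback, and closed under finite disjoint unions --- which is exactly what is needed, since the rewriting of a composite into normal form $T\circ N\circ R$ only ever replaces middle maps by base changes and then composes them, and the mediating morphism for the product is the disjoint union of bispans. The one point requiring genuine care --- that the exponential relation produces as its new norm the base change of the old one along $\prod_g A \to T$, rather than some unrelated map --- you identify and verify correctly, as is your stabilizer computation $\Stab(s,t')=\Stab(t')$ establishing pullback-stability of the isotropy-preserving class.
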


\begin{proposition}[{\cite[Prop. 4.3]{BHIncomplete}}]
A product preserving functor $\cP^{G}_{Iso}\to\Set$ is a semi-Mackey functor. 
\end{proposition}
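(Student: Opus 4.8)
The plan is to identify $\cP^{G}_{Iso}$ with the category of spans of finite $G$-sets and then to recognize product-preserving functors out of the latter as semi-Mackey functors in the standard way.

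First I would show that every morphism of $\cP^{G}_{Iso}$ is, up to the equivalence of diagrams, of the form $T_h\circ R_f$. A morphism is represented by a polynomial $S\xleftarrow{f}U\xrightarrow{g}V\xrightarrow{h}T$ in which $g$ is an isomorphism; using $g\colon U\to V$ to relabel the middle object, the diagram is isomorphic to $S\xleftarrow{f\circ g^{-1}}V\xrightarrow{=}V\xrightarrow{h}T$, so the norm part $N_g$ is absorbed and the morphism is just the span $S\xleftarrow{f g^{-1}}V\xrightarrow{h}T=T_h\circ R_{f g^{-1}}$. This makes the assignment sending a span $S\xleftarrow{a}U\xrightarrow{b}T$ to its isomorphism class of polynomials a bijection on morphism sets.

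Next I would verify that composition in $\cP^{G}_{Iso}$ agrees with the pullback composition of spans. Given composable spans $T_{h_1}\circ R_{f_1}\colon S\to T$ and $T_{h_2}\circ R_{f_2}\colon T\to Q$, the composite is $T_{h_2}\circ R_{f_2}\circ T_{h_1}\circ R_{f_1}$, and the only nontrivial step is to commute $R_{f_2}$ past $T_{h_1}$. This is governed exactly by the pullback relation $R_g\circ T_f=T_{f'}\circ R_{g'}$: forming the pullback $P=U_1\times_T U_2$ of $h_1$ and $f_2$ with projections $\pi_1,\pi_2$ and applying the relation gives $R_{f_2}\circ T_{h_1}=T_{\pi_2}\circ R_{\pi_1}$, so the composite collapses to the single span $S\xleftarrow{f_1\pi_1}P\xrightarrow{h_2\pi_2}Q$, which is precisely the pullback composite. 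No exponential/norm-interchange relation is needed here, since every norm in sight is an isomorphism. Together with the first step, and with \cite[Prop. 2.12]{BHIncomplete} guaranteeing that $\cP^{G}_{Iso}$ is closed under composition with disjoint union as product, this exhibits an isomorphism of categories between $\cP^{G}_{Iso}$ and the span category of finite $G$-sets that carries products to products.

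Finally I would unwind a product-preserving functor $\m{M}\colon\cP^{G}_{Iso}\to\Set$ as a semi-Mackey functor. Product preservation forces $\m{M}(S\sqcup T)\cong\m{M}(S)\times\m{M}(T)$; since disjoint union is a biproduct in the span category, applying $\m{M}$ to the fold map (as a transfer) endows each $\m{M}(T)$ with a commutative monoid structure, with unit the image of $\emptyset\to T$, and with associativity, commutativity, and unitality all following formally from the corresponding identities among spans. The restriction maps are the images of the $R_f$ and the transfers are the images of the $T_f$; their separate functorialities are the two functoriality propositions, and the double-coset (Mackey) formula is exactly the pullback relation translated through $\m{M}$. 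This is precisely the data of a semi-Mackey functor. I expect the main obstacle to be the careful bookkeeping in the second step: one must confirm that commuting $R$ past $T$ reproduces span composition on the nose, including that the resulting middle map remains an isomorphism so the composite lands back in $\cP^{G}_{Iso}$, and that identities and the product structure are respected, rather than anything conceptually deep.
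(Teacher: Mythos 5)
Your proposal is correct and takes essentially the same route as the proof in the literature: the paper itself gives no argument but defers to \cite[Prop.~4.3]{BHIncomplete}, and that proof is precisely your observation that an invertible norm can be absorbed into the restriction, identifying $\cP^{G}_{Iso}$ with the span category of finite $G$-sets (with composition reducing to the pullback relation $R_g\circ T_f=T_{f'}\circ R_{g'}$, no exponential diagrams needed), followed by Lindner's characterization of semi-Mackey functors as product-preserving functors on spans.
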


\begin{proposition}[{\cite[Prop. 12.11]{Strickland}}]
A product preserving functor $\cP^{G}_{gr}\to\Set$ is a semi-Green functor.
\end{proposition}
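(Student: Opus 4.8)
The plan is to extract the semi-Green functor structure directly from the generating morphisms of $\cP^{G}_{gr}$ and to verify the axioms using the three families of relations recorded above. First I would observe that $\cP^{G}_{Iso}\subseteq\cP^{G}_{gr}$, since an isomorphism automatically preserves isotropy; restricting a product-preserving functor $\m{R}\colon\cP^{G}_{gr}\to\Set$ along this inclusion and invoking the proposition identifying product-preserving functors on $\cP^{G}_{Iso}$ with semi-Mackey functors produces the underlying semi-Mackey functor. This supplies the restriction maps $R$, the transfer maps $T$, and the additive commutative-monoid structure on each $\m{R}(T)$ coming from the transfer along the fold $\nabla\colon T\sqcup T\to T$. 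Throughout I would use the canonical identifications $\m{R}(S\sqcup T)\cong\m{R}(S)\times\m{R}(T)$ and $\m{R}(\emptyset)\cong\ast$ furnished by product preservation.

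Next I would produce the multiplication. A fold map $\nabla\colon T\sqcup T\to T$ is a disjoint union of identity maps, so it preserves isotropy and $N_\nabla$ is a morphism of $\cP^{G}_{gr}$; composing with the product identification gives a map $\m{R}(T)\times\m{R}(T)\to\m{R}(T)$, which I take as the product. Commutativity follows from $\nabla\circ\tau=\nabla$ for the swap $\tau$ together with the functoriality of $N$; associativity follows from comparing the two factorizations of the threefold fold; and the unit $1_T\in\m{R}(T)$ is the image of $N$ along the isotropy-preserving map $\emptyset\to T$, with the unit laws coming from $\nabla\circ(\id\sqcup r_\emptyset)=\id$ and functoriality. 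Multiplicativity of restriction, $R_\phi(xy)=R_\phi(x)R_\phi(y)$, is the pullback relation $R_\phi\circ N_\nabla=N_{\nabla'}\circ R_{\phi\sqcup\phi}$ read through the product identification, the pullback of a fold along any $\phi$ being again a fold.

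The one substantive point is Frobenius reciprocity, $T_f(x\cdot R_f y)=T_f(x)\cdot y$ for $f\colon S\to T$, and this is where I would spend the most care. Rewritten through the product identifications it asserts the equality of two morphisms $\m{R}(S\sqcup T)\to\m{R}(T)$, namely $T_f\circ N_{\nabla_S}\circ R_{\id_S\sqcup f}$ and $N_{\nabla_T}\circ T_{f\sqcup\id_T}$. The plan is to realize this as a single instance of the interchange relation $N_g\circ T_h=T_{h'}\circ N_{g'}\circ R_{f'}$. Taking $g=\nabla_T\colon T\sqcup T\to T$ and $h=f\sqcup\id_T\colon S\sqcup T\to T\sqcup T$, I would compute the associated exponential diagram: the dependent product $\prod_{\nabla_T}(f\sqcup\id_T)$ has fiber $f^{-1}(t)\times\{t\}$ over $t\in T$, hence is $S$ with structure map $h'=f$; the pullback $g'$ of $\nabla_T$ along $f$ is $\nabla_S$; and the remaining leg works out to $f'=\id_S\sqcup f$. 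Substituting these into the interchange relation yields exactly the desired identity.

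The main obstacle is precisely this last computation: correctly identifying the exponential diagram attached to the pair $(\nabla_T,\,f\sqcup\id_T)$ and confirming that every norm appearing in it, here $N_{\nabla_T}$ and $N_{\nabla_S}$, is along a fold map and so lives in $\cP^{G}_{gr}$, so that the relation, which a priori holds in the full category $\cP^{G}$, may legitimately be applied after restriction. Once these identifications are in hand, all the semi-Green functor axioms are accounted for. I would close by remarking that the construction is in fact an equivalence: every isotropy-preserving $g$ is, orbit-by-orbit over its target, a fold map, so $N_g$ is determined by iterated products together with the Mackey structure, showing that a product-preserving functor on $\cP^{G}_{gr}$ carries neither more nor less data than a semi-Green functor.
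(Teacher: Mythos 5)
This proposition is quoted verbatim from Strickland, and the paper supplies no proof of its own, so there is no internal argument to compare against; judged on its own terms, your proof is correct, and it is essentially the standard argument for this fact. The two computations carrying all the weight both check out: the pullback of the fold $\nabla_T$ along $\phi\colon S\to T$ is $\nabla_S$, which gives multiplicativity of restrictions, and the exponential diagram generated by $g=\nabla_T$ and $h=f\sqcup\id_T$ does have $\prod_{\nabla_T}(f\sqcup\id_T)\cong S$ with $h'=f$, $g'=\nabla_S$, and $f'=\id_S\sqcup f$, so the interchange relation reads $N_{\nabla_T}\circ T_{f\sqcup\id_T}=T_f\circ N_{\nabla_S}\circ R_{\id_S\sqcup f}$, which is precisely Frobenius reciprocity once read through the product identifications. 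You also handle the one genuine subtlety correctly: every norm you invoke is along a fold map and hence lies in $\cP^{G}_{gr}$, and since $\cP^{G}_{gr}$ is a subcategory of $\cP^{G}$ (the paper's quotation of Prop.~2.12 of Blumberg--Hill), an equality in $\cP^{G}$ between composites of morphisms of the subcategory is an equality in the subcategory, to which the product-preserving functor may then be applied. Your closing remark---that an isotropy-preserving map is, orbit by orbit over its target, a projection $Gv\times F\to Gv$ with $F$ a trivial finite $G$-set, so that its norm is forced to be an iterated multiplication---is exactly the observation that upgrades the one-directional statement asserted here to the equivalence of categories that Strickland's Proposition 12.11 actually establishes.
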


The category of Mackey functors is a closed symmetric monoidal category. The symmetric monoidal product is called the box product and is the Day convolution product of the tensor product of abelian groups with the Cartesian product of finite $G$-sets. Classically, a commutative Green functor is a commutative monoid under the box product. In particular, there is an obvious notion of the category of modules over a Green functor, and this is a symmetric monoidal category if the Green functor is commutative.

Expanding out what it means to be a commutative monoid under the box product, we see that a [commutative] Green functor is a Mackey functor $\m{R}$ such that for all finite $G$-sets $T$, $\m{R}(T)$ is commutative ring, such that all restriction maps are maps of commutative rings, and such that if $f\colon T\to T'$ is a map of finite $G$-sets, then we have the Frobenius reciprocity relation
\[
a\cdot T_f(b)=T_f(R_f(a)\cdot b)
\]
for all $a\in \m{R}(T')$ and $b\in\m{R}(T)$. 

There is a similar description for Tambara functors. 

\begin{proposition}[{\cite{MazurArxiv}}]
A Tambara functor is a commutative Green functor $\m{R}$ together with norm maps
\[
N_H^K\colon \m{R}(G/H)\to\m{R}(G/K)
\]
for all $H\subset K\subset G$.
These are maps of multiplicative monoids and they satisfy certain universal formulae expressing the norm of a sum and the norm of a transfer.
\end{proposition}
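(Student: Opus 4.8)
The plan is to recognize this as a \emph{presentation} result: both sides describe the same product-preserving functor $\m{R}\colon\cP^G\to\Set$, and the work is to translate between the abstract generators-and-relations description of $\cP^G$ recorded above and the concrete package of a commutative Green functor equipped with norm maps $N_H^K$. Since every polynomial factors as $T_h\circ N_g\circ R_f$, a product-preserving $\m{R}$ is determined by its values $\m{R}(T)$ together with the induced restriction maps $\m{R}(R_f)$, transfer maps $\m{R}(T_f)$, and norm maps $\m{R}(N_g)$, subject exactly to the functoriality of $R,N,T$, the two pullback relations, and the exponential (interchange) relation. The content is that this data reorganizes into the stated form.

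For the forward direction, I would first restrict $\m{R}$ along the inclusion $\cP^G_{gr}\hookrightarrow\cP^G$; this produces a product-preserving functor on $\cP^G_{gr}$, hence a semi-Green functor, which is group-complete by hypothesis and so is a commutative Green functor. This records the additive Mackey structure, the ring structure on each $\m{R}(T)$, and Frobenius reciprocity. I would then define $N_H^K:=\m{R}(N_\pi)$ for the projection $\pi\colon G/H\to G/K$. Multiplicativity of $N_H^K$ is formal: writing the ring multiplication on $\m{R}(X)$ as $\m{R}(N_{\nabla_X})$ for the fold map $\nabla_X\colon X\sqcup X\to X$, and using that $\m{R}$ is product-preserving while $N_{\pi\sqcup\pi}$ decomposes as $N_\pi\times N_\pi$, the identity $\pi\circ\nabla_{G/H}=\nabla_{G/K}\circ(\pi\sqcup\pi)$ yields $N_H^K(ab)=N_H^K(a)N_H^K(b)$, with $N_H^K(1)=1$ coming from the terminal object. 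The ``norm of a transfer'' and ``norm of a sum'' formulas are then precisely the instances of the exponential relation $N_g\circ T_h=T_{h'}\circ N_{g'}\circ R_{f'}$ and of the interaction of norms with coproduct decompositions of the source, specialized to orbit maps.

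Conversely, given a commutative Green functor equipped with norm maps $N_H^K$, I would reconstruct $\m{R}(N_g)$ for an arbitrary $g\colon S\to T$ by decomposing it into orbit pieces: split $T$ into orbits to reduce to a map $g\colon S\to G/K$, use the equivalence between $G$-maps into $G/K$ and $K$-sets over a point to pass to the restricted $K$-structure, and then split the resulting $K$-set into orbits $K/H_i$, so that $\m{R}(N_g)$ is assembled from the maps $N_{H_i}^K$ together with the Green-functor restrictions and transfers. One then checks that the two assignments are mutually inverse.

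The main obstacle is verifying that this reconstruction is well-defined and satisfies all relations of $\cP^G$ — in particular the exponential relation in full generality, not merely for orbit maps. I expect this to be the heart of the argument: one must show that an arbitrary exponential diagram can be analyzed orbit-by-orbit and that the orbit-level universal formulas, together with Frobenius reciprocity and the two pullback relations, propagate to the general identity. This bookkeeping is exactly what is carried out in \cite{MazurArxiv}, and once it is in place the claimed equivalence of descriptions follows.
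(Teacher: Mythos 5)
This proposition is stated in the paper as a citation to \cite{MazurArxiv}; the paper itself gives no proof (it explicitly black-boxes the formulas, saying only that ``it suffices that such a formula exists''), so there is no internal argument to compare against. Your outline is the standard one and matches the strategy of the cited reference: extract the Green functor by restricting along $\cP^{G}_{gr}\hookrightarrow\cP^{G}$, define $N_H^K:=\m{R}(N_\pi)$ on orbit projections, get multiplicativity from functoriality of $N$ plus product-preservation applied to fold maps, reconstruct $\m{R}(N_g)$ for general $g$ by orbit decomposition of source and target, and then verify the relations of $\cP^{G}$ --- with the exponential relation in full generality being the genuine work. Two caveats: first, your multiplicative unit comes from the norm along the map $\emptyset\to G/H$ (the empty product), not from the terminal object; second, and more substantively, your final paragraph defers exactly the hard step (well-definedness of the reconstruction and propagation of the orbit-level formulas to arbitrary exponential diagrams) back to \cite{MazurArxiv}, so what you have is a correct proof architecture rather than a complete proof. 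Given that the paper treats the statement the same way, this is acceptable here, but be aware that the deferred bookkeeping is where essentially all of the content lives.
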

The exact formulae for the norms of a transfer will not matter for us here; it suffices that such a formula exists. For a sum, we need slightly more information.

\begin{proposition}[{\cite[Thm. 2.3]{MazurArxiv}}]\label{prop:NormofSum}
Consider the maps $\nabla\colon G/H\amalg G/H\to G/H$ and $\pi\colon G/H\to\ast$. Then we have an isomorphism of $G$-sets over $\ast$
\[
\prod_{\pi} \nabla\cong \Big(F(G/H,\{0,1\})\to\ast\Big),
\]
where $\{0,1\}=(G/H\amalg G/H)/G$ has a trivial action. 

The diagram
\[
\xymatrix{
{G/H}\ar[d]_{\pi} & {G/H\amalg G/H}\ar[l]_-{\nabla} & {G/H\times F(G/H,\{0,1\})}\ar[l]_-{\epsilon}\ar[d]^{g} \\
{\ast} & & {F(G/H,\{0,1\})}\ar[ll]^{h}
}
\]
is an exponential diagram, where 
\[
\epsilon(gH,f):=(gH,f(gH))\in G/H\times\{0,1\}\cong G/H\amalg G/H.
\]
\end{proposition}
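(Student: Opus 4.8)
The plan is to compute the dependent product directly from its explicit pointwise model and then read off the exponential diagram. Recall that for the terminal map $\pi\colon G/H\to\ast$ the right adjoint $\prod_\pi$ to $\pi^\ast$ has a concrete description: for an object $g\colon A\to G/H$ of $\Set^G_{\downarrow G/H}$, the $G$-set $\prod_\pi A$ is the set of all set-theoretic sections of $g$, i.e.\ functions $\sigma\colon G/H\to A$ with $g\circ\sigma=\id$, equipped with the conjugation action $(\gamma\cdot\sigma)(s):=\gamma\cdot\sigma(\gamma^{-1}s)$. (Since the base $T$ is $\ast$, there is a single fiber and $\prod_\pi A$ is literally the set of global sections.) First I would recall or quickly verify this model, either from the pointwise formula for dependent products or by checking it against the adjunction $\pi^\ast\dashv\prod_\pi$.

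Next I would specialize to $A=G/H\amalg G/H$ with $g=\nabla$. Because $\nabla$ collapses the two copies, a section $\sigma$ is exactly a choice, for each $s\in G/H$, of one of its two preimages; writing $G/H\amalg G/H\cong G/H\times\{0,1\}$ this is the same datum as a function $f\colon G/H\to\{0,1\}$ via $\sigma(s)=(s,f(s))$. This gives a bijection $\prod_\pi\nabla\cong F(G/H,\{0,1\})$ of underlying sets, and the key check is that it is $G$-equivariant. Here I would use that the two $G$-orbits of $G/H\amalg G/H$ are the two copies, so that $\{0,1\}=(G/H\amalg G/H)/G$ carries the trivial action and the ``copy label'' of a point is $G$-invariant; tracing the conjugation action through the bijection then yields precisely $(\gamma\cdot f)(s)=f(\gamma^{-1}s)$, the standard action on $F(G/H,\{0,1\})$. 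This establishes the claimed isomorphism of $G$-sets over $\ast$.

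Finally I would identify the exponential diagram. Since $T=\ast$, the pullback $S\times_T\prod_\pi\nabla$ is simply $G/H\times F(G/H,\{0,1\})$, the vertical map is the projection onto $F(G/H,\{0,1\})$, and the bottom map is the terminal map; the only remaining point is to recognize $\epsilon$ as the counit of $\pi^\ast\dashv\prod_\pi$ evaluated at $\nabla$. By construction this counit is the evaluation-of-section map $(s,\sigma)\mapsto\sigma(s)$, which lands over $s$ because $\sigma$ is a section, so it is a map over $G/H$; under the identification $\sigma\leftrightarrow f$ it becomes $(gH,f)\mapsto(gH,f(gH))$, matching the stated formula. I would then record that $\nabla\circ\epsilon$ is the projection to $G/H$, so the square is the defining exponential square.

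I expect the only real friction to be the equivariance bookkeeping in the middle step: sections of $\nabla$ are \emph{not} assumed $G$-equivariant, and one must carefully match the conjugation action on sections with the internal-hom action on $F(G/H,\{0,1\})$, where triviality of the action on $\{0,1\}$ is exactly what makes the two agree. A cleaner but equivalent route, which I would fall back on if the direct action computation got unwieldy, is to verify the universal property via Yoneda: cartesian closedness of $\Set^G$ gives natural bijections $\Hom_G\big(Y,F(G/H,\{0,1\})\big)\cong\Hom_G\big(Y\times G/H,\{0,1\}\big)\cong\Hom_{\Set^G_{\downarrow G/H}}(\pi^\ast Y,\nabla)\cong\Hom_{\Set^G_{\downarrow\ast}}\big(Y,\prod_\pi\nabla\big)$, whence the isomorphism follows, with $\epsilon$ recovered by evaluating at the identity. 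There is no deep obstacle here; the content is entirely in making the explicit description and the counit formula precise.
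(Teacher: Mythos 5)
The paper itself gives no proof of this proposition: it is quoted from Mazur's paper (the cited Theorem 2.3), so there is no internal argument to compare yours against. Your proof is correct and self-contained, and it fills in exactly what the citation leaves out. The pointwise model of $\prod_\pi$ for the terminal map --- set-theoretic, not equivariant, sections of $\nabla$ with the conjugation action $(\gamma\cdot\sigma)(s)=\gamma\cdot\sigma(\gamma^{-1}s)$ --- is the standard description of the dependent product, and your identification of sections of the fold map with functions $f\colon G/H\to\{0,1\}$ is where the real content lies. You correctly isolate the one delicate point: the label in $\{0,1\}=(G/H\amalg G/H)/G$ is $G$-invariant precisely because each copy of $G/H$ is a single orbit, and this is what makes the conjugation action on sections match the internal-hom action $(\gamma\cdot f)(s)=f(\gamma^{-1}s)$. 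The identification of $\epsilon$ with the counit of $\pi^\ast\dashv\prod_\pi$ (evaluation of a section at a point, which lands over that point since $\sigma$ is a section of $\nabla$) then exhibits the square as the defining exponential diagram, with the pullback collapsing to a product because the base is $\ast$. Your fallback Yoneda argument via cartesian closedness of $\Set^G$ is also valid and is arguably the cleaner route, since it sidesteps the action bookkeeping entirely; either version would serve as a complete proof.
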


Proposition~\ref{prop:NormofSum} gives the formula for the norm of a sum of elements:
\[
N_H^G(a+b)=T_h\circ N_g\circ R_f(a,b).
\]

When discussing differentials and the universal differential, we will need to work with non-unital Tambara functors. These can be defined simply from $\cP^{G}_{Epi}$.

\begin{definition}
A {\defemph{non-unital semi-Tambara functor}} is a product preserving functor $\cP^{G}_{Epi}\to\Set$. It is a {\defemph{non-unital Tambara functor}} if it is group complete.
\end{definition}

Just as with ordinary Tambara functors, we can view a non-unital Tambara functor as a non-unital Green functor together with norm maps that satisfy the same universal formulae.

\subsection{Relative Tambara functors}
If $\m{S}$ is a Tambara functor, then we can talk about Tambara functors and non-unital Tambara functors in the category of $\m{S}$-modules.

\begin{definition}
If $\m{S}$ is a Tambara functor, then an {\defemph{$\m{S}$-Tambara functor}} is a Tambara functor $\m{R}$ together with a map $\m{S}\to\m{R}$ of Tambara functors.

Let $\STamb$ denote the corresponding comma category of Tambara functors equipped with a map from $\m{S}$.
\end{definition}

\begin{definition}\label{def:NonUnitalRAlgebra}
A {\defemph{non-unital $\m{S}$-Tambara functor}} is an $\m{S}$-module $\m{R}$ equipped with norm maps for any surjection $f\colon T\to T'$ that satisfies
\[
N_f(r\cdot s)=N_f(r)\cdot N_f(s)
\]
for all $s\in\m{S}(T)$ and $r\in\m{R}(T)$.
\end{definition}

Both of these have a more diagrammatic approach.

\begin{proposition}
Let $\m{S}$ be a Tambara functor and let $\m{R}$ be a [non-unital] Tambara functor. Assume that $\m{R}$ is a module over $\m{S}$, and let 
\[
\mu\colon\m{S}\Box\m{R}\to\m{R}
\] 
be the action of $\m{S}$ on $\m{R}$. Then $\m{R}$ is a [non-unital] $\m{S}$-Tambara functor if and only if $\mu$ is a map of [non-unital] Tambara functors.
\end{proposition}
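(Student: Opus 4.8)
The plan is to isolate the norm maps as the only substantive content. Recall that a [non-unital] Tambara functor is a product-preserving functor on $\cP^G$ (resp.\ $\cP^{G}_{Epi}$), and a map of such is a natural transformation, that is, a map commuting with the generating morphisms $R_f$, $N_g$, and $T_h$. The box product $\m{S}\Box\m{R}$ is the Day convolution, so it carries a [non-unital] Tambara structure whose restrictions and transfers are the evident ones and whose norm maps are determined on external products by $N_f(s\boxtimes r)=N_f(s)\boxtimes N_f(r)$, the two norms on the right being those of $\m{S}$ and $\m{R}$; these extend to arbitrary elements by the universal norm-of-a-sum formula of Proposition~\ref{prop:NormofSum} and the analogous norm-of-a-transfer formula. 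Since $\mu$ is the structure map of an $\m{S}$-module, it is in particular a morphism of Mackey functors and therefore commutes with every $R_f$ and $T_h$. Consequently $\mu$ is a map of [non-unital] Tambara functors if and only if it commutes with every norm map.

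First I would carry out the computation on external products, which already exhibits the relevant relation. For a surjection $f\colon T\to T'$ and elements $s\in\m{S}(T)$, $r\in\m{R}(T)$, the action gives $\mu(s\boxtimes r)=r\cdot s$, so
\[
\mu\big(N_f(s\boxtimes r)\big)=\mu\big(N_f(s)\boxtimes N_f(r)\big)=N_f(r)\cdot N_f(s),
\]
whereas
\[
N_f\big(\mu(s\boxtimes r)\big)=N_f(r\cdot s).
\]
Thus $\mu$ commutes with $N_f$ on external products precisely when $N_f(r\cdot s)=N_f(r)\cdot N_f(s)$, which is exactly the defining relation of Definition~\ref{def:NonUnitalRAlgebra}. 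Reading this equivalence in both directions yields both implications of the proposition, modulo the passage from external products to arbitrary elements.

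The hard part will be that passage. A general element of $(\m{S}\Box\m{R})(T)$ is a sum of transfers of external products, and the norm of such an element is not the sum of the norms but is expanded, via the universal formulae built on exponential diagrams such as the one in Proposition~\ref{prop:NormofSum}, into a combination of restrictions, transfers, norms along smaller $G$-sets, and products. Because these are exactly the operations $\mu$ is already known to respect, and because the expansion uses the same exponential diagrams in $\m{S}\Box\m{R}$ as in $\m{R}$, the identity $\mu\circ N_f=N_f\circ\mu$ on external products propagates to all elements by induction on the number of summands. I expect the genuine obstacle to be not this induction but the input to it: verifying that the norm maps on the coend $\m{S}\Box\m{R}$ are well defined and are genuinely computed by $N_f(s\boxtimes r)=N_f(s)\boxtimes N_f(r)$ together with the universal formulae. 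Once this description of the Tambara structure on the box product is secured, the equivalence follows by the unwinding above.
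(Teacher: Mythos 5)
The first thing to note is that the paper contains no proof of this proposition at all: it is stated as an immediate diagrammatic reformulation of Definition~\ref{def:NonUnitalRAlgebra}, so your attempt can only be measured against the definitions. Your skeleton is the right one. Since $\mu$ is a map of Mackey functors it commutes with every $R_f$ and $T_h$ (and the products are themselves norms along fold maps $\nabla\colon T\amalg T\to T$), so being a map of [non-unital] Tambara functors reduces to commuting with norms; and your computation on external products, comparing $\mu\big(N_f(s\boxtimes r)\big)=N_f(s)\cdot N_f(r)$ with $N_f\big(\mu(s\boxtimes r)\big)=N_f(r\cdot s)$, is exactly the relation of Definition~\ref{def:NonUnitalRAlgebra}. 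You are also right that the Tambara structure on $\m{S}\Box\m{R}$ is a prerequisite for the statement even to parse, so deferring its construction is legitimate rather than a defect of your argument.

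There are, however, two genuine gaps. The substantive one is the propagation step: your ``induction on the number of summands'' does not terminate as stated. Writing $x=y+z$ with $y$ a sum of $n-1$ transfers of external products and $z$ one more, the norm of the sum is $N_g(x)=N_gT_\nabla(y,z)=T_{h'}N_{g'}R_{f'}(y,z)$ via the exponential diagram, and $R_{f'}(y,z)$ is again a sum of $n$ transfers of external products (restriction along the pullback formula does not reduce the count), so the inductive hypothesis never applies to a smaller case. The repair makes induction unnecessary: a tuple of external products is an external product over the disjoint union, so every element of $(\m{S}\Box\m{R})(T)$ is a \emph{single} transfer $T_{\amalg f_i}\big((s_i)\boxtimes(r_i)\big)$ of a \emph{single} external product; since restrictions of external products are external products, one application of $N_gT_f=T_{h'}N_{g'}R_{f'}$ (valid in both $\m{S}\Box\m{R}$ and $\m{R}$, and staying inside $\cP^G_{Epi}$ because pullbacks of epimorphisms are epimorphisms) reduces the general case to the external-product case. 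The second, smaller gap is the unital bracket: there ``$\m{R}$ is an $\m{S}$-Tambara functor'' means by the paper's definition that a map of Tambara functors $\eta\colon\m{S}\to\m{R}$ induces $\mu$, so you must still identify your norm condition with the existence of such an $\eta$. This is short --- set $\eta(s)=\mu(s\boxtimes 1)$ and check $N_f\eta(s)=N_f(1\cdot s)=1\cdot N_f(s)=\eta(N_f s)$ using your condition, and conversely $N_f(r\cdot\eta(s))=N_f(r)\cdot N_f(\eta(s))=N_f(r)\cdot N_f(s)$ when $\eta$ is a Tambara map --- but it is absent from your write-up, which only treats the condition of Definition~\ref{def:NonUnitalRAlgebra}.
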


\begin{remark}
The category of modules over a Tambara functor $\m{S}$ inherits a $G$-symmetric monoidal structure from the category of Mackey functors. The $G$-commutative monoids here are exactly the $\m{S}$-Tambara functors, and the non-unitial $G$-commutative monoids are exactly the non-unital $\m{S}$-Tambara functors.
\end{remark}

\section{Abelian group and Mackey functor objects}

We recall work of Strickland (building on work of Quillen) on the homology of a Tambara functor.

\begin{definition}
Let $\m{R}$ be an $\m{S}$-Tambara functor.

Let $\STamb_{/\m{R}}$ be the comma category of $\m{S}$-Tambara functors with a map to $\m{R}$. 

Let $\SAb_{/\m{R}}$ denote the category of abelian group objects in $\STamb_{/\m{R}}$. 

Let $\Mods{\m{R}}$ denote the category of modules over the underlying Green functor for $\m{R}$ in the category of Mackey functors. 
\end{definition}

There is an obvious ``augmentation ideal'' functor
\[
I\colon\SAb_{/\m{R}}\to\Mods{\m{R}}
\]
which assigns to an abelian group object $\m{B}$ the kernel of $\m{B}\to\m{R}$. In commutative rings, this functor is half of an equivalence of categories, with quasi-inverse given by the square-zero extension. Strickland shows that square-zero extensions make perfect sense here, but that these are not inverse equivalences.

\begin{proposition}[{\cite[Prop. 14.7]{Strickland}}]
There is a ``square-zero extension functor''
\[
\m{R}\semidirect(-)\colon \Mods{\m{R}}\to \SAb_{/\m{R}}
\]
which sends an $\m{R}$-module to the square-zero extension in Green functors and which endows the module summand with trivial norms.

These are not inverse equivalences: the map $\m{R}\semidirect(-)$ is not essentially surjective.
\end{proposition}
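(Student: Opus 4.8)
The plan is to prove the two assertions separately: first build the functor and check it lands in abelian group objects, then isolate an invariant it cannot see and use it to produce an object outside the essential image.

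\emph{Construction.} Given an $\mR$-module $\mM$, I set $\mR\semidirect\mM:=\mR\oplus\mM$ as a Mackey functor, with the square-zero multiplication $(r,m)\cdot(r',m')=(rr',\,rm'+r'm)$ using the $\mR$-action on $\mM$; this is the square-zero extension of Green functors, with unit $(1,0)$, augmentation the projection $\mR\semidirect\mM\to\mR$, and zero-section the inclusion $\mR\to\mR\semidirect\mM$. The only real work is the Tambara structure. For a surjection $f\colon T\to T'$ I would define $N_f$ by decomposing $(r,m)=(r,0)+(0,m)$ and feeding this into the norm-of-a-sum formula underlying Proposition~\ref{prop:NormofSum}: since $N_f$ is forced to be multiplicative and $(0,m)\cdot(0,m')=0$, every summand involving two or more factors from $\mM$ vanishes, so $N_f(r,m)$ equals $N_f(r)$ in the $\mR$-coordinate plus a single term that is linear in $m$ and built from restrictions of $r$ together with a transfer. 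For $G=C_2$ and $f\colon C_2\to\ast$ this reads $N_e^{C_2}(r,m)=\bigl(N_e^{C_2}(r),\,\tr_e^{C_2}(r\cdot\gamma m)\bigr)$, with $\gamma$ the Weyl conjugation. Declaring the $\mM$-summand to carry no further contribution (``trivial norms'') pins the definition down, and I would then check that this gives a product-preserving functor on $\cP^G$ by verifying the generating relations of the excerpt, each of which reduces, after discarding the square-zero terms, to an identity already holding in $\mR$. Functoriality in $\mM$ is immediate, since a module map $\mM\to\mN$ respects the transfer-and-action formula for the norm.

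\emph{Abelian group object.} I would equip $\mR\semidirect\mM\to\mR$ with fibrewise addition $(r,m)+_{\mathrm{grp}}(r,m')=(r,m+m')$, zero the zero-section, and inverse $(r,m)\mapsto(r,-m)$, and show these are maps of $\mS$-Tambara functors over $\mR$. Additivity of restrictions and transfers is automatic, and compatibility with the norm follows because the $\mM$-coordinate of $N_f$ is additive in $m$, which is precisely the computation above specialised along the diagonal $\mR$-component.

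\emph{Failure of essential surjectivity.} Here I would isolate the datum that $\mR\semidirect(-)$ discards. For a general abelian group object $\mB\to\mR$, the classical argument that the augmentation ideal of such an object squares to zero shows $\mM:=\ker(\mB\to\mR)$ is a square-zero ideal, so $\mB\cong\mR\oplus\mM$ as Green functors and the entire extra structure is the norm. Repeating the $C_2$ computation with an unknown norm gives $N_e^{C_2}(r,m)=\bigl(N_e^{C_2}(r),\,\tr_e^{C_2}(r\gamma m)+\nu(m)\bigr)$ for a map $\nu\colon\mM(C_2)\to\mM(\ast)$; the norm-of-sum relation forces $\nu$ additive and multiplicativity of $N$ forces the semilinearity $\nu(r\cdot m)=N_e^{C_2}(r)\cdot\nu(m)$, but nothing forces $\nu=0$. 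Since the splitting $\mB=\mR\oplus\mM$ is canonical (kernel plus group-object zero-section) and norms are preserved by isomorphisms of Tambara functors over $\mR$, the map $\nu$ is an isomorphism invariant, and the trivial-norm extension $\mR\semidirect\mM$ is exactly the case $\nu=0$. It therefore suffices to produce one $\mR$ and one module $\mM$ carrying a nonzero norm-semilinear $\nu$; the resulting $\mB$ then lies outside the essential image. The main obstacle is this verification at both ends: confirming that a chosen nonzero $\nu$ is consistent with \emph{all} the Tambara axioms—the exponential relations and the norm-of-transfer formula, not merely norm-of-sum and multiplicativity—so that $\mB$ is genuinely an abelian group object, and dually confirming in the construction that the square-zero norms satisfy those same relations for every $G$ and every surjection $f$, rather than only for the single norm $N_e^{C_2}$ computed above.
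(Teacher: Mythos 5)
The paper does not actually prove this proposition: it is quoted from Strickland, and the only material to compare against is the paper's surrounding discussion, where the stated reason for the failure of essential surjectivity is that abelian-group-object structure kills products (hence restrictions of norms) but not the norms on the augmentation ideal themselves. Your diagnosis matches this exactly: your $\nu$ is precisely the residual norm $N_e^{C_2}$ restricted to $\mM$, and your construction of the functor (square-zero Green functor, with norms defined so that the module summand contributes only the single transfer term linear in $m$) is the standard one. So conceptually you are aligned with the source.

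The genuine gap is in the non-surjectivity half, and you flag it yourself without closing it: the claim is an existence statement, and you never exhibit an abelian group object with $\nu\neq 0$. Moreover, your constraint analysis is incomplete, so the assertion that ``nothing forces $\nu=0$'' is unjustified as written. Besides additivity (from norm-of-sum) and semilinearity (from multiplicativity), the pullback relation between restrictions and norms gives $\res_e^{C_2}N_e^{C_2}(0,m)=(0,m)\cdot\gamma(0,m)=0$, i.e.\ $\res_{\mM}\circ\nu=0$, and invariance of the norm under the Weyl action gives $\nu\circ\gamma=\nu$. The first of these is a real obstruction: it forces $\nu=0$ whenever $\mM$ has injective restriction (for instance any fixed-point Mackey functor), so the example cannot be chosen arbitrarily. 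Once this is seen, the gap is easy to fill: take $G=C_2$, $\mR=\m{\F_2}$ the constant Tambara functor (restriction the identity, transfer zero, norm the identity), and $\mM$ the $\mR$-module with $\mM(\ast)=\mM(C_2)=\F_2$, zero restriction and transfer, and trivial Weyl action; then $\nu=\id$ satisfies all four constraints, and the Tambara structure on $\mB=\mR\oplus\mM$ with $N_e^{C_2}(r,m)=\bigl(r,\nu(m)\bigr)$ verifies multiplicativity, the exponential (norm-of-sum) relations, and the pullback relation directly --- for $C_2$ the exponential relations reduce to norms of finite sums, and all transfer terms vanish since both transfers are zero --- and the fibrewise addition is a map of Tambara functors, so $\mB$ is an abelian group object over $\mR$. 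Since any isomorphism in $\STamb_{/\mR}$ commutes with augmentations and norms, $\mB$ cannot be isomorphic to any $\mR\semidirect\mM'$, which is what completes the proof; without this (or some other) explicit example, your argument establishes only the easy half of the proposition.
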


In the square-zero extension, the $\m{S}$-Tambara functor structure is induced by the natural maps of Tambara functors
\[
\m{S}\xrightarrow{\eta}\m{R}\xrightarrow{Id\semidirect 0}\m{R}\semidirect\mM.
\]

The issue here is with norms in the augmentation ideal. The only condition we deduce from this being an abelian group object is that all products vanish. However, this only tells us about the restrictions of norms to various subgroups, not to the norms themselves. To better explain the failure of this equivalence and to prove the more accurate statement, we being with a simple observation.

\begin{proposition}
If $\m{R}$ and $\m{B}$ are Tambara functors, then the set of Tambara functor maps between them has a natural extension to a coefficient system of sets:
\[
\m{\Tamb}(\m{R},\m{B})(G/H)=\Tamb^H(i_H^\ast\m{R},i_H^\ast\m{B})\subset\Mackey^H(i_H^\ast\m{R},i_H^\ast\m{B}).
\]
The restriction maps on Mackey functors give rise to the restriction maps in $\m{\Tamb}$.  This provides an enrichment in coefficient systems for the category $\Tamb$, where composition and the units are level-wise.
%{\bf AJB: Say something about composition and unit maps?}

The categories $\STamb$ and $\STamb_{/\m{R}}$ are also enriched in coefficient systems and form a sub-coefficient system of $\Tamb$.
\end{proposition}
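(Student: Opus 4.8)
The plan is to realize every level of $\m{\Tamb}(\m{R},\m{B})$ through the restriction functors on Tambara functors, to read off the coefficient system structure from their functoriality, and then to check that composition and units are inherited level-wise. The only genuinely non-formal ingredient is the behavior of these restriction functors on norms, which I isolate at the end.

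First I would recall that for each subgroup $H\subset G$ the restriction functor $i_H^\ast\colon\Tamb^G\to\Tamb^H$ is precomposition with induction $G\times_H(-)\colon\cP^H\to\cP^G$. Under the equivalence $\Set^H\simeq\Set^G_{\downarrow G/H}$ induction is the forgetful functor to $\Set^G$, and as such it preserves pullbacks, disjoint unions, and --- the key point --- exponential diagrams, so it carries the generators $R_f,N_f,T_f$ and the relations of Section~2 into $\cP^G$. Consequently $i_H^\ast\m{R}=\m{R}\circ\big(G\times_H(-)\big)$ is again product preserving, hence an $H$-Tambara functor, and a natural transformation of such functors --- that is, a map of Tambara functors --- restricts to a map of $H$-Tambara functors. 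This defines the level map $\Tamb^G(\m{R},\m{B})\to\Tamb^H(i_H^\ast\m{R},i_H^\ast\m{B})$. Restricting instead along $\cP^H_{Iso}\hookrightarrow\cP^H$ gives the corresponding statement for Mackey functors, and since $G\times_H(-)$ carries $\cP^H_{Iso}$ into $\cP^G_{Iso}$, the functor $i_H^\ast$ commutes with forgetting norms; thus $\Tamb^H(i_H^\ast\m{R},i_H^\ast\m{B})\subset\Mackey^H(i_H^\ast\m{R},i_H^\ast\m{B})$ at each level.

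Next I would promote these level maps to a coefficient system. A morphism $G/K\to G/H$ in $\cOrb_G$ is, after conjugating, a subgroup inclusion $K\subset H$, and for such the inductions compose up to canonical isomorphism, $G\times_K(-)\cong\big(G\times_H(-)\big)\circ\big(H\times_K(-)\big)$, with the conjugation isomorphisms $c_g$ acting compatibly; precomposition being contravariant, the assignment $G/H\mapsto\Tamb^H(i_H^\ast\m{R},i_H^\ast\m{B})$ with the maps induced by these inductions is a contravariant functor on $\cOrb_G$, i.e.\ a coefficient system of sets. By the previous paragraph it is a sub-coefficient system of $\m{\Mackey}(\m{R},\m{B})$, and its restriction maps are by construction those of the Mackey functor enrichment. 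For the enrichment itself, note that at level $G/H$ composition is the ordinary composition of $H$-Tambara maps and the unit is the identity of $i_H^\ast\m{R}$; because each $i_H^\ast$ is a functor it preserves composites and identities, so these pairings commute with all restriction maps and hence assemble into morphisms of coefficient systems $\m{\Tamb}(\m{B},\m{C})\times\m{\Tamb}(\m{R},\m{B})\to\m{\Tamb}(\m{R},\m{C})$ and $\ast\to\m{\Tamb}(\m{R},\m{R})$ for the level-wise product. Associativity and unitality hold because they hold level-wise in each $\Tamb^H$, giving the claimed enrichment.

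Finally, the relative versions require only that the structure maps be tracked. For $\STamb$ and $\STamb_{/\m{R}}$ the set at level $G/H$ is the subset of those $H$-Tambara maps commuting with the restricted structure maps from $i_H^\ast\m{S}$ (and, for $\STamb_{/\m{R}}$, with the restricted augmentations to $i_H^\ast\m{R}$). Since $i_H^\ast$ sends the structure map $\m{S}\to\m{R}$ to $i_H^\ast\m{S}\to i_H^\ast\m{R}$, these conditions are stable under restriction and composition, so the corresponding sets form sub-coefficient systems closed under the enrichment. I expect the main obstacle to be the claim, used above, that $G\times_H(-)$ preserves exponential diagrams --- equivalently that restriction of Tambara functors is computed by precomposition with induction. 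This is exactly what forces a $G$-Tambara map to restrict to an $H$-Tambara map rather than merely to a map of Mackey functors, and it is the one point at which the multiplicative norm structure, rather than mere formal functoriality, must be verified.
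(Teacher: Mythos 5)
Your proof is correct, and it is essentially the argument the paper intends (the paper states this proposition without proof, treating it as standard): the levels are given by restriction, which is precomposition with induction $\Set^H\simeq\Set^G_{\downarrow G/H}\to\Set^G$, the coefficient-system structure is inherited from the Mackey functor enrichment, and composition and units are level-wise. The one non-formal point you isolate---that induction preserves exponential diagrams, so that a map of $G$-Tambara functors restricts to a map of $H$-Tambara functors rather than merely of Mackey functors---is indeed the crux, and your justification is right: dependent products (hence exponential diagrams) in a slice category are computed in the underlying category, so the forgetful functor $\Set^G_{\downarrow G/H}\to\Set^G$ preserves them.
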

The following is an immediate application of the Yoneda Lemma.

\begin{proposition}
An abelian group structure on $\m{B}\to\m{R}$ is the same as a natural lift of $\m{S}\mhyphen\m{\Tamb}_{/\m{R}}(-,\m{B})$ to a coefficient system of abelian groups.
\end{proposition}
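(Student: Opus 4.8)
The plan is to run the standard Yoneda-lemma classification of abelian group objects, now enriched over coefficient systems. Recall the prototype in any category $\cC$ with finite products: an abelian group object structure on an object $B$ is the same datum as a lift of the representable presheaf $\cC(-,B)\colon\cC^{\mathrm{op}}\to\Set$ along the forgetful functor $\Ab\to\Set$. Indeed, the structure maps $B\times B\to B$, $\ast\to B$, and $B\to B$ correspond under the Yoneda embedding to the addition, zero, and negation natural transformations of $\cC(-,B)$, and the group axioms are equalities of maps that hold if and only if they hold after applying the (fully faithful, product-preserving) Yoneda embedding. The whole proof is to transport this reasoning into the coefficient-system-enriched category $\STamb_{/\m{R}}$ supplied by the preceding proposition.

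First I would record the finite products in $\STamb_{/\m{R}}$: the terminal object is $\m{R}$ with its identity augmentation, and the product of two augmented objects is the fiber product over $\m{R}$. The crucial compatibility is that the coefficient-system-valued functor $\m{S}\mhyphen\m{\Tamb}_{/\m{R}}(-,\m{B})$ carries finite products in the represented variable to products of coefficient systems. This reduces to two observations: the restriction functors $i_H^\ast$ preserve limits, so $i_H^\ast(\m{B}\times_{\m{R}}\m{B})\cong i_H^\ast\m{B}\times_{i_H^\ast\m{R}}i_H^\ast\m{B}$; and products of coefficient systems are computed level-wise, so the natural isomorphism $\m{S}\mhyphen\m{\Tamb}_{/\m{R}}(-,\m{B})\times\m{S}\mhyphen\m{\Tamb}_{/\m{R}}(-,\m{B})\cong\m{S}\mhyphen\m{\Tamb}_{/\m{R}}(-,\m{B}\times_{\m{R}}\m{B})$ may be verified one level at a time, where it is the universal property of the $H$-equivariant product applied to $\Tamb^H(i_H^\ast(-),i_H^\ast\m{B})$.

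Next I would unwind both directions. Given an abelian group object structure on $\m{B}\to\m{R}$, applying the representable functor and the product isomorphism above equips each $\m{S}\mhyphen\m{\Tamb}_{/\m{R}}(\m{C},\m{B})$ with an abelian group structure; since the structure maps are honest maps of $G$-Tambara functors, their restrictions furnish the group operations at every level $G/H$, and the restriction maps of the Hom coefficient system are group homomorphisms by functoriality of $i_H^\ast$, so the lift lands in coefficient systems of abelian groups and is natural in $\m{C}$. Conversely, a natural lift gives at each level $G/H$ an abelian group structure on $\Tamb^H(i_H^\ast(-),i_H^\ast\m{B})$ natural in the first variable; level-wise Yoneda together with the product isomorphism recovers the structure maps (e.g.\ the addition natural transformation corresponds to a map $\m{B}\times_{\m{R}}\m{B}\to\m{B}$), and the coefficient-system compatibility forces these to assemble into the $G$-equivariant structure maps whose restrictions reproduce every level. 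The axioms match because the Yoneda embedding is fully faithful.

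The only content beyond formalities is the product-preservation step: one must confirm that forming the fiber product over $\m{R}$ and then restricting to $H$ agrees with restricting and then forming the fiber product, and hence that the Hom coefficient system of a product is the product of Hom coefficient systems. This is exactly where the equivariant structure could, in principle, misbehave, so it is the step I would check most carefully; once it is in hand, the equivalence of data and the matching of the group axioms follow immediately from the fully faithful, product-preserving Yoneda embedding, precisely as in the non-equivariant case.
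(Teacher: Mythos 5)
Your proposal is correct and follows essentially the paper's own route: the paper gives no written argument at all, stating only that the proposition is ``an immediate application of the Yoneda Lemma,'' and your write-up is exactly that Yoneda argument spelled out level-wise, with the compatibility of $i_H^\ast$ with fiber products over $\m{R}$ supplying the enrichment step. The only caveat is that your claim that the level-$G/H$ structures are ``forced'' by coefficient-system compatibility is asserted rather than proved — it genuinely holds here because units at lower levels are pinned down by naturality (test against $\m{C}=\m{R}$) and because maps in $\STamb_{/\m{R}}$ are in particular maps of Mackey functors, an additive category, so each addition map is determined by its unit section — but this is the same point the paper itself elides.
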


The Yoneda Lemma also better explains the coefficient system structure here. The restriction functor $i_H^\ast$ from $G$-Tambara functors augmented over $\m{R}$ to $H$-Tambara functors augmented over $i_H^\ast{\m{R}}$ has a right adjoint: coinduction \cite[Prop. 18.3]{Strickland}. This has a very simple formulation: for any $T\in\Set^G$, 
\[
\CoInd_H^G(\m{R})(T):=\m{R}(i_H^\ast T).
\] 
Similarly, if $f\colon T\to T'$, then 
\begin{align*}
T_f&:= T_{i_H^\ast f}\\
N_f&:= N_{i_H^\ast f} \\
R_f&:= R_{i_H^\ast f}.
\end{align*}

Since $\CoInd_H^G$ is the right adjoint to $i_H^\ast$, we have a natural map of Tambara functors
\[
\eta_{\m{R}}\colon \m{R}\to\CoInd_H^G i_H^\ast\m{R}. 
\]

This gives us the right adjoint to $i_H^\ast$ in the category $\STamb$: if $\m{R}$ is an $i_H^\ast\m{S}$-Tambara functor, then $\CoInd_H^G\m{R}$ is an $\m{S}$-Tambara functor via the composite
\[
\m{S}\xrightarrow{\eta_{\m{S}}} \CoInd_H^Gi_H^\ast\m{S}\xrightarrow{\CoInd_H^G\eta}\CoInd_H^G\m{R}.
\]

We can also define a relative version of coinduction.
\begin{definition}
If $\m{B}\xrightarrow{f} i_H^\ast\m{R}$ is a Tambara functor over $i_H^\ast\m{R}$, then let $F_H(G,\m{B})$ be the pullback
\[
\xymatrix{
{F_H(G,\m{B})}\ar[r]\ar[d]_{F_H(G,f)} & {\CoInd_H^G\m{B}}\ar[d]^{\CoInd_H^G f}  \\
{\m{R}}\ar[r]_-{\eta_{\m{R}}} & {\CoInd_H^Gi_H^\ast\m{R}.}
}
\]
\end{definition}

\begin{proposition}
If $\m{B}$ is an $i_H^\ast\m{S}$-Tambara functor and $\m{R}$ is an $\m{S}$-Tambara functor, then the pullback of the structure maps gives $F_H(G,\m{B})$ the structure of an $\m{S}$-Tambara functor.
\end{proposition}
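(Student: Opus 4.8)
The plan is to exhibit $F_H(G,\m{B})$ as a pullback taken internally to the category $\STamb$ of $\m{S}$-Tambara functors, using that the forgetful functor $\STamb\to\Tamb$ (remembering only the underlying $G$-Tambara functor) creates limits. Since $\STamb$ is the coslice category of Tambara functors under $\m{S}$, a limit there is computed by taking the limit of underlying Tambara functors and equipping it with the map out of $\m{S}$ furnished by the universal property; thus it suffices to promote the entire defining square to a diagram of $\m{S}$-Tambara functors and $\m{S}$-Tambara functor maps, and the structure on $F_H(G,\m{B})$ then comes for free.

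First I would place compatible $\m{S}$-Tambara structures on the other three corners. The functor $\m{R}$ is an $\m{S}$-Tambara functor by hypothesis; write $s_{\m{R}}\colon\m{S}\to\m{R}$ for its structure map. The object $\CoInd_H^G\m{B}$ is an $\m{S}$-Tambara functor via the composite $\m{S}\xrightarrow{\eta_{\m{S}}}\CoInd_H^G i_H^\ast\m{S}\xrightarrow{\CoInd_H^G s_{\m{B}}}\CoInd_H^G\m{B}$, where $s_{\m{B}}\colon i_H^\ast\m{S}\to\m{B}$ is the $i_H^\ast\m{S}$-structure, exactly as recalled in the excerpt. The remaining corner $\CoInd_H^G i_H^\ast\m{R}$ is likewise an $\m{S}$-Tambara functor, being $\CoInd_H^G$ of the $i_H^\ast\m{S}$-Tambara functor $i_H^\ast\m{R}$, whose structure map is $i_H^\ast s_{\m{R}}$.

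The heart of the argument is to check that the two legs of the square, namely $\eta_{\m{R}}\colon\m{R}\to\CoInd_H^G i_H^\ast\m{R}$ and $\CoInd_H^G f\colon\CoInd_H^G\m{B}\to\CoInd_H^G i_H^\ast\m{R}$, are maps of $\m{S}$-Tambara functors for these structures. For $\eta_{\m{R}}$ this is precisely the naturality of the unit $\eta\colon\id\To\CoInd_H^G i_H^\ast$ applied to $s_{\m{R}}$, which gives
\[
\eta_{\m{R}}\circ s_{\m{R}}=\CoInd_H^G(i_H^\ast s_{\m{R}})\circ\eta_{\m{S}};
\]
the right-hand side is exactly the chosen $\m{S}$-structure on $\CoInd_H^G i_H^\ast\m{R}$. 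For $\CoInd_H^G f$ one uses that $f$ is a map of augmented $i_H^\ast\m{S}$-Tambara functors, so that $f\circ s_{\m{B}}=i_H^\ast s_{\m{R}}$; applying $\CoInd_H^G$ and precomposing with $\eta_{\m{S}}$ then identifies the two $\m{S}$-structures, whence $\CoInd_H^G f$ is $\m{S}$-linear. This is the main point to get right, and its only real subtlety is the observation that the $\m{S}$-structure on $\CoInd_H^G i_H^\ast\m{R}$ obtained by pushing $s_{\m{R}}$ forward along $\eta_{\m{R}}$ agrees with the one obtained by coinducing $i_H^\ast s_{\m{R}}$ — again just naturality of $\eta$ — so that the square is unambiguously a diagram in $\STamb$.

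With the defining square now a diagram in $\STamb$, its pullback in $\Tamb$ carries a canonical $\m{S}$-Tambara structure making every projection $\m{S}$-linear; this is the asserted structure on $F_H(G,\m{B})$. Finally, composing the induced map $\m{S}\to F_H(G,\m{B})$ with the projection $F_H(G,f)\colon F_H(G,\m{B})\to\m{R}$ recovers $s_{\m{R}}$, so $F_H(G,f)$ is a map of $\m{S}$-Tambara functors and $F_H(G,\m{B})$ is naturally an object of $\STamb_{/\m{R}}$, as desired.
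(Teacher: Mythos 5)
Your proof is correct and is essentially the paper's argument: the paper draws a single diagram whose square commutes by naturality of the unit $\eta$ applied to the structure map $\m{S}\to\m{R}$, and whose triangle commutes because $\m{B}$ is an $i_H^\ast\m{S}$-Tambara functor augmented to $i_H^\ast\m{R}$ --- precisely your two compatibility checks for the legs $\eta_{\m{R}}$ and $\CoInd_H^G f$. Your explicit appeal to the forgetful functor $\STamb\to\Tamb$ creating limits is just a more formal packaging of the paper's implicit use of the pullback's universal property to produce the map $\m{S}\to F_H(G,\m{B})$.
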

\begin{proof}
Consider the diagram
\[
\xymatrix{
{\m{S}}\ar[r]^-{\eta_{\m{S}}}\ar[d]_{\eta_{\m{R}}} & {\CoInd_H^Gi_H^\ast\m{S}}\ar[r]^{\CoInd_H^G \eta_{\m{B}}} \ar[d]_{\CoInd_H^G \eta_{i_H^\ast \m{R}}} & {\CoInd_H^G\m{B}}\ar[dl]^{\CoInd_H^G\epsilon} \\
{\m{R}}\ar[r]_-{\eta_{\m{R}}} & {\CoInd_H^G i_H^\ast\m{R}.}
}
\]
The square commutes since $\eta$ is a natural tranformation. The triangle commutes since $\m{B}$ is an $i_H^\ast\m{S}$-Tambara functor augmented to $i_H^\ast\m{R}$.
\end{proof}

\begin{proposition}
The functor $F_H(G,-)$ is the right-adjoint to the restriction functor $i_H^\ast$ in the category of Tambara functors augmented over $\m{R}$.
\end{proposition}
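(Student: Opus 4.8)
The plan is to deduce this from the absolute adjunction $i_H^\ast\leftadjoint\CoInd_H^G$ recorded above (\cite[Prop. 18.3]{Strickland}), using the general principle that a right adjoint on comma categories is obtained by pulling back the ambient right adjoint along the unit. Concretely, I want to produce, for every $G$-Tambara functor $\m{C}$ augmented to $\m{R}$ by some $a\colon\m{C}\to\m{R}$ and every $H$-Tambara functor $\m{B}$ augmented to $i_H^\ast\m{R}$ by $f$, a natural bijection between maps $\m{C}\to F_H(G,\m{B})$ over $\m{R}$ and maps $i_H^\ast\m{C}\to\m{B}$ over $i_H^\ast\m{R}$.

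First I would unwind the universal property of the pullback defining $F_H(G,\m{B})$. A map $\m{C}\to F_H(G,\m{B})$ amounts to a pair consisting of a map $\psi\colon\m{C}\to\CoInd_H^G\m{B}$ and a map $\m{C}\to\m{R}$ whose two composites into $\CoInd_H^G i_H^\ast\m{R}$ agree. Requiring the map to lie over $\m{R}$ pins the second component down to be the given augmentation $a$, so that maps over $\m{R}$ are exactly those $\psi$ satisfying $\CoInd_H^G f\circ\psi=\eta_{\m{R}}\circ a$. This step uses that the relevant pullbacks of Tambara functors exist and are computed level-wise, since the forgetful functor to coefficient systems creates limits.

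Next I would apply the absolute adjunction to rewrite $\psi$ as its adjunct $\overline{\psi}\colon i_H^\ast\m{C}\to\m{B}$, writing $\psi=\CoInd_H^G(\overline{\psi})\circ\eta_{\m{C}}$, and then translate the side condition. On the one hand, $\CoInd_H^G f\circ\psi=\CoInd_H^G(f\circ\overline{\psi})\circ\eta_{\m{C}}$ is the adjunct of $f\circ\overline{\psi}$; on the other hand, naturality of the unit $\eta$ applied to $a$ identifies $\eta_{\m{R}}\circ a=\CoInd_H^G(i_H^\ast a)\circ\eta_{\m{C}}$ as the adjunct of $i_H^\ast a$. Since the adjunction correspondence is a bijection, the condition $\CoInd_H^G f\circ\psi=\eta_{\m{R}}\circ a$ holds if and only if $f\circ\overline{\psi}=i_H^\ast a$, that is, precisely when $\overline{\psi}$ is a map over $i_H^\ast\m{R}$. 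This establishes the bijection on hom-sets, and naturality in both $\m{C}$ and $\m{B}$ is inherited from naturality of the absolute adjunction and of $\eta$.

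I expect the only real subtlety to be the bookkeeping around the unit: one must check that the naturality square for $\eta$ at the augmentation $a$ is exactly what converts the pullback side-condition into the augmentation-compatibility condition for the adjunct. Everything else is formal, given that the ambient limits exist in Tambara functors. Finally, the same argument applies verbatim in $\STamb_{/\m{R}}$ once $F_H(G,\m{B})$ carries the $\m{S}$-structure produced in the preceding proposition, since that structure is defined by the universal property of the very same pullback.
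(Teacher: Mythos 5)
Your proof is correct, and it is exactly the formal argument the paper leaves implicit: the paper states this proposition without proof, treating it as a standard consequence of the absolute adjunction $i_H^\ast \dashv \CoInd_H^G$ together with the universal property of the defining pullback. In particular, the unit you construct (the map $\m{C}\to F_H(G,i_H^\ast\m{C})$ induced by the naturality square for $\eta$ against the augmentation) is precisely the unit the paper records in the displayed commutative square immediately following the proposition, so your bookkeeping with the side condition matches the paper's intended reasoning.
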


The unit of the restriction-coinduction adjunction is induced by the natural commutative square
\[
\xymatrix{
{\m{B}}\ar[d]_f \ar[r]^{\eta_{\m{B}}} & {\CoInd_H^G i_H^\ast\m{B}}\ar[d]^{\CoInd_H^G i_H^\ast f} \\
{\m{R}}\ar[r]_{\eta_{\m{R}}} & {\CoInd_H^G i_H^\ast\m{R}.}
}
\]

The Yoneda Lemma now also describes the restriction maps in the coefficient system $\m{S}\mhyphen\m{\Tamb}$.

\begin{proposition}
The restriction maps in
\[
\m{S}\mhyphen\m{\Tamb}_{/\m{R}}(\m{C},\m{B})
\]
are induced by the natural maps $\eta_{\m{B}}\colon\m{B}\to F(G/H,\m{B})$.
\end{proposition}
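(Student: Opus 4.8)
The plan is to recognize each level of the hom coefficient system as a corepresentable functor of $\m{C}$ and then to read off the restriction maps from the units of the restriction–coinduction adjunction via the Yoneda lemma.

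First I would record the relevant identifications. By the enrichment proposition, the value of $\m{S}\mhyphen\m{\Tamb}_{/\m{R}}(\m{C},\m{B})$ at $G/H$ is the set of $i_H^\ast\m{S}$-Tambara maps $i_H^\ast\m{C}\to i_H^\ast\m{B}$ over $i_H^\ast\m{R}$, and for $H\subset K$ its restriction map is ``restrict a $K$-equivariant map to an $H$-equivariant one,'' i.e.\ apply $i_H^\ast$. The previous proposition gives the adjunction $i_H^\ast\dashv F_H(G,-)$ in Tambara functors over $\m{R}$, so with second argument $i_H^\ast\m{B}$ we obtain a natural bijection
\[
\big(\m{S}\mhyphen\m{\Tamb}_{/\m{R}}(\m{C},\m{B})\big)(G/H)\cong\STamb_{/\m{R}}\big(\m{C},F(G/H,\m{B})\big),\qquad F(G/H,\m{B}):=F_H(G,i_H^\ast\m{B}).
\]
Thus every level is corepresented by $\m{C}$, with representing object $F(G/H,\m{B})$; note $F(G/G,\m{B})=\m{B}$ since $\CoInd_G^G=\id$.

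Next I would identify the restriction maps. Each restriction is natural in $\m{C}$, so by the Yoneda lemma it is induced by a unique map of the representing objects $F(G/K,\m{B})\to F(G/H,\m{B})$ in $\STamb_{/\m{R}}$. The core computation is the case $K=G$: the $G/G$-level class of $g\colon\m{C}\to\m{B}$ is sent to the adjoint transpose of $i_H^\ast g$, which by the standard unit formula for $i_H^\ast\dashv F_H(G,-)$ is $F_H(G,i_H^\ast g)\circ\eta_{\m{C}}$. Naturality of the unit $\eta$ at $g$ then gives
\[
F_H(G,i_H^\ast g)\circ\eta_{\m{C}}=\eta_{\m{B}}\circ g,
\]
so the restriction $G/G\to G/H$ is exactly post-composition with $\eta_{\m{B}}\colon\m{B}\to F(G/H,\m{B})$, as asserted. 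The general restriction $G/K\to G/H$ is obtained by running the identical argument $K$-equivariantly, using the adjunction $i_H^\ast\dashv F_H(K,-)$ over $i_K^\ast\m{R}$ and the naturality of its unit.

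The step I expect to be the main obstacle is the coherence in this last reduction: one must check that the subgroup-level constructions $F_H(K,-)$ and their units assemble compatibly with the global $F_H(G,-)$, so that the resulting family of post-composition maps genuinely realizes the coefficient-system structure over the whole orbit category rather than merely matching it on each arrow in isolation. This coherence follows from the transitivity $i_H^\ast=i_H^\ast\circ i_K^\ast$ together with the factorization $\CoInd_H^G\cong\CoInd_K^G\circ\CoInd_H^K$ forced by the level-wise formula $\CoInd_H^G(\m{R})(T)=\m{R}(i_H^\ast T)$; everything else is a formal consequence of Yoneda and the naturality of the adjunction unit.
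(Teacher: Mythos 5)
Your proposal is correct and follows exactly the route the paper intends: the paper states this proposition as an immediate consequence of the Yoneda Lemma together with the unit of the restriction--coinduction adjunction $i_H^\ast\dashv F_H(G,-)$ discussed just before, which is precisely your identification of each level as corepresented by $\m{C}$ and your computation that the adjoint transpose of $i_H^\ast g$ equals $\eta_{\m{B}}\circ g$ by naturality of the unit. Your added care about compatibility across subgroups (via transitivity of restriction and the level-wise formula for coinduction) is a detail the paper leaves implicit, but it is the same argument.
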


To fully understand the structure, we extend this coefficient system in the obvious way to a product preserving functor
\[
\m{S}\mhyphen\m{\Tamb}_{/\m{R}}(\m{C},\m{B})\colon\big(\Set^{G,\amalg}\big)^{op}\to\Set.
\]
This part is also representable. 

\begin{proposition}[{\cite[Cor. 6.7]{BHIncomplete}}]
If $\m{B}$ is a Tambara functor and $T$ is a finite $G$-set, then the Mackey functor
\[
\m{B}_T:=\m{B}(T\times -)
\]
has a canonical Tambara functor structure. 

When $T=G/H$, we have a natural isomorphism
\[
\m{B}_{G/H}\cong \CoInd_H^G i_H^\ast\m{B}
\]
\end{proposition}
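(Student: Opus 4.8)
The plan is to exhibit $\m{B}_T$ as $\m{B}$ precomposed with an endofunctor of the polynomial category, so that all of its Tambara structure (restrictions, transfers, and norms) is transported from $\m{B}$. First I would check that $T\times-\colon\Set^G\to\Set^G$ extends to a product-preserving functor $\Phi_T\colon\cP^G\to\cP^G$, sending the generating morphisms $R_f$, $N_f$, $T_f$ to $R_{T\times f}$, $N_{T\times f}$, $T_{T\times f}$. By the generators-and-relations description of $\cP^G$, functoriality on $\cP^G$ reduces to checking that $T\times-$ preserves the three structures governing the relations among generators: finite coproducts (so disjoint unions stay products, which also makes $\Phi_T$ product-preserving), pullbacks (for the $R$--$N$ and $R$--$T$ interchange), and exponential diagrams (for the $N$--$T$ interchange). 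The first two I expect to be routine: product with a fixed $G$-set distributes over disjoint unions, and since $(T\times S)\times_{T\times Y}(T\times S')\cong T\times(S\times_Y S')$ it preserves pullbacks. Granting the third, $\m{B}_T=\m{B}\circ\Phi_T$ is a composite of product-preserving functors, hence a semi-Tambara functor, and it is group-complete at each level because $\m{B}_T(S)=\m{B}(T\times S)$ is a value of the Tambara functor $\m{B}$.

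The main obstacle will be preservation of exponential diagrams, i.e.\ showing that $T\times-$ commutes with the dependent products $\prod_h$. I would settle this by a fiberwise computation: for $h\colon S\to Y$ and $A\to S$, the fiber of $\prod_h A$ over $y\in Y$ is $\prod_{s\in h^{-1}(y)}A_s$, while the fiber of $\prod_{T\times h}(T\times A)$ over $(t,y)$ is the product over $\{t\}\times h^{-1}(y)$ of the corresponding fibers of $T\times A$, which is again $\prod_{s\in h^{-1}(y)}A_s$. These identifications are $G$-equivariant and natural, producing a canonical isomorphism $\prod_{T\times h}(T\times A)\cong T\times\prod_h A$ under which $\Phi_T$ carries an exponential diagram to an exponential diagram. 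Conceptually this is just the stability of the distributivity diagrams under the base change $T\times-$, but the bookkeeping with the maps $f'$, $g'$, $h'$ is where the real care is needed.

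For the final identification I would argue entirely at the level of polynomial functors. The definition of coinduction recalled above says precisely that $\CoInd_H^G\m{R}=\m{R}\circ\big(i_H^\ast\colon\cP^G\to\cP^H\big)$, while the standard restriction formula $(i_H^\ast\m{B})(U)=\m{B}(G\times_H U)$ reads $i_H^\ast\m{B}=\m{B}\circ\big(G\times_H-\colon\cP^H\to\cP^G\big)$; composing these gives $\CoInd_H^G i_H^\ast\m{B}=\m{B}\circ\Psi$ with $\Psi=G\times_H i_H^\ast(-)$. On the other hand, the construction above is $\m{B}_{G/H}=\m{B}\circ\Phi_{G/H}$ with $\Phi_{G/H}=G/H\times-$. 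The standard isomorphism of $G$-sets $G/H\times S\cong G\times_H i_H^\ast S$ is natural in $S\in\Set^G$, and since both $\Phi_{G/H}$ and $\Psi$ preserve coproducts, pullbacks, and exponential diagrams, it extends uniquely to a natural isomorphism $\Phi_{G/H}\cong\Psi$ of endofunctors of $\cP^G$. Precomposing with $\m{B}$ then yields the asserted natural isomorphism $\m{B}_{G/H}\cong\CoInd_H^G i_H^\ast\m{B}$ of Tambara functors, so that the only genuinely new input beyond Part~1 is this single $G$-set isomorphism.
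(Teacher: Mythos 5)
Your proposal is correct and takes essentially the same route as the source of this result: the paper itself states this proposition without proof, citing \cite[Cor.~6.7]{BHIncomplete}, and the argument there is exactly yours --- show that $T\times(-)$ preserves coproducts, pullbacks, and exponential diagrams (the last being base-change stability of dependent products in the topos $\Set^G$, which your fiberwise computation verifies), so that it induces a product-preserving endofunctor of $\cP^G$ through which $\m{B}$ can be precomposed. The identification $\m{B}_{G/H}\cong\CoInd_H^G i_H^\ast\m{B}$ via the natural $G$-set isomorphism $G/H\times S\cong G\times_H i_H^\ast S$, transported through the polynomial category, is likewise the standard argument.
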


Since the Cartesian product distributes over disjoint union, the following is immediate.
\begin{proposition}
If $\m{B}$ is a Tambara functor and $T_1$ and $T_2$ are finite $G$-sets, then we have a natural isomorphism of Tambara functors
\[
\m{B}_{T_1\amalg T_2}\cong \m{B}_{T_1}\times\m{B}_{T_2}.
\]
\end{proposition}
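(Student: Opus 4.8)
The plan is to reduce the claim to the defining formula $\m{B}_T(S)=\m{B}(T\times S)$ together with two basic facts: that the Cartesian product distributes over disjoint union in $\Set^G$, and that a Tambara functor, being product-preserving on $\cP^G$ where disjoint union \emph{is} the categorical product, sends disjoint unions of finite $G$-sets to products of sets.

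First I would fix a finite $G$-set $S$ and use the canonical isomorphism of $G$-sets
\[
(T_1\amalg T_2)\times S\cong (T_1\times S)\amalg (T_2\times S).
\]
Applying $\m{B}$ and invoking product-preservation yields a bijection
\[
\m{B}_{T_1\amalg T_2}(S)=\m{B}\big((T_1\amalg T_2)\times S\big)\cong\m{B}(T_1\times S)\times\m{B}(T_2\times S)=\big(\m{B}_{T_1}\times\m{B}_{T_2}\big)(S),
\]
where the final equality is the levelwise computation of the product of Tambara functors. This produces the candidate isomorphism.

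Second, I would check that these levelwise bijections commute with all structure maps, i.e.\ that they assemble into an isomorphism of Tambara functors. The Tambara structure on each $\m{B}_T$ is obtained by applying $\m{B}$ to the image under $T\times(-)$ of the generating polynomials $R_f$, $N_f$, and $T_f$. Since the distributivity isomorphism is natural in the $G$-set argument, the polynomial $(T_1\amalg T_2)\times\phi$ decomposes, up to that isomorphism, as the disjoint union of $T_1\times\phi$ and $T_2\times\phi$; and a disjoint union of polynomials is precisely the product morphism in $\cP^G$. Applying the product-preserving functor $\m{B}$ therefore identifies each structure map of $\m{B}_{T_1\amalg T_2}$ with the product of the corresponding maps for $\m{B}_{T_1}$ and $\m{B}_{T_2}$, which is exactly the structure map on $\m{B}_{T_1}\times\m{B}_{T_2}$.

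The only step deserving a second glance — and the main, quite mild, obstacle — is compatibility with the norm maps, since these are governed by exponential diagrams rather than by ordinary functoriality. Here it suffices that $T\times(-)$ carries an exponential diagram to one isomorphic to the disjoint union of the exponential diagrams associated to $T_1$ and $T_2$, which again follows from distributivity; product-preservation of $\m{B}$ then delivers the norm compatibility just as for restrictions and transfers. Naturality in $S$ is immediate from naturality of the distributivity isomorphism, so the levelwise bijections assemble into the desired isomorphism of Tambara functors.
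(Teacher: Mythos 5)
Your proof is correct and follows the same route as the paper, which simply observes that ``since the Cartesian product distributes over disjoint union, the following is immediate'' from the defining formula $\m{B}_T=\m{B}(T\times-)$ and product-preservation of $\m{B}$ (disjoint union being the product in $\cP^G$). Your elaboration---including the check that the structure maps, and in particular the norms via exponential diagrams, respect the decomposition---is exactly the detail the paper leaves implicit.
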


Combining this with the units of the restriction-coinduction adjunction then gives the following.
\begin{proposition}
If $\m{B}$ is a Tambara functor, then for any finite $G$-set $T$, there is a natural map of Tambara functors
\[
\m{B}\to\m{B}_T.
\]
In particular, if $\m{B}$ is an $\m{S}$-Tambara functor, then $\m{B}_T$ is canonically so for any $T$.
\end{proposition}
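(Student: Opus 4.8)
The plan is to produce the map uniformly as restriction along the projections, and then to recover the orbit-wise description from the preceding results as a consistency check. For each finite $G$-set $S$, the projection $\pi_S\colon T\times S\to S$ is a map of finite $G$-sets, so restriction along it gives a map $R_{\pi_S}\colon\m{B}(S)\to\m{B}(T\times S)=\m{B}_T(S)$. As $S$ varies, the $\pi_S$ assemble into a natural transformation $T\times(-)\To\mathrm{id}$ of endofunctors of $\Set^G$, and I claim the collection $\{R_{\pi_S}\}_S$ is a map of Tambara functors $\m{B}\to\m{B}_T$.

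Since the morphisms $R_f$, $N_f$, and $T_f$ generate $\cP^G$, it suffices to check that $\{R_{\pi_S}\}$ is natural with respect to each of these three families, where I recall that the canonical Tambara structure on $\m{B}_T$ has its restrictions, transfers, and norms induced by $\mathrm{id}_T\times(-)$. Compatibility with restrictions is immediate from the contravariant functoriality of $R$, using $f\circ\pi_S=\pi_{S'}\circ(\mathrm{id}_T\times f)$ for $f\colon S\to S'$. For transfers and norms, the key point is that for every $f\colon S\to S'$ the square with vertical maps $\pi_S,\pi_{S'}$ and horizontal maps $f,\mathrm{id}_T\times f$ is a pullback of finite $G$-sets; the base-change relations $R_g\circ N_f=N_{f'}\circ R_{g'}$ and $R_g\circ T_f=T_{f'}\circ R_{g'}$ recorded earlier then give exactly $R_{\pi_{S'}}\circ N_f=N_{\mathrm{id}_T\times f}\circ R_{\pi_S}$ and the analogous identity for $T_f$. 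I expect this verification — specifically the norm compatibility — to be the main (and essentially the only) obstacle, since it is the one place where the genuinely multiplicative structure of a Tambara functor enters; everything else is formal, and no separate check of the $N$–$T$ interchange is needed because naturality on generators already forces it.

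Naturality in $T$ is then transparent: for $\phi\colon T\to T'$ the induced restriction $\m{B}_{T'}\to\m{B}_T$ is $R_{\phi\times\mathrm{id}}$, and $\pi^{T}_S=\pi^{T'}_S\circ(\phi\times\mathrm{id})$ forces the triangle $\m{B}\to\m{B}_{T'}\to\m{B}_T$ to commute. To reconcile this with the orbit picture set up above, I would decompose $T\cong\coprod_i G/H_i$; the product formula $\m{B}_T\cong\prod_i\m{B}_{G/H_i}$ together with $\m{B}_{G/H_i}\cong\CoInd_{H_i}^G i_{H_i}^\ast\m{B}$ identifies the $i$-th component of $\m{B}\to\m{B}_T$ with the unit $\eta_{\m{B}}$ of the restriction–coinduction adjunction, which is a Tambara map. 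This both reproves the claim via the universal property of the product and shows independence of the chosen decomposition.

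Finally, for the ``in particular'' statement, suppose $\m{B}$ is an $\m{S}$-Tambara functor with structure map $\m{S}\to\m{B}$. Composing with the natural map just constructed yields a Tambara map $\m{S}\to\m{B}\to\m{B}_T$, which equips $\m{B}_T$ with a canonical $\m{S}$-Tambara structure; naturality of $\m{B}\to\m{B}_T$ makes this assignment canonical and functorial in $\m{B}$.
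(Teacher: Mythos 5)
Your proof is correct, and it takes a genuinely different route from the paper's. The paper offers no computation at all here: the proposition is presented as an immediate consequence of the two results preceding it, namely decompose $T\cong\coprod_i G/H_i$, identify $\m{B}_T\cong\prod_i\m{B}_{G/H_i}\cong\prod_i\CoInd_{H_i}^G i_{H_i}^\ast\m{B}$, and take the product over $i$ of the units $\eta_{\m{B}}\colon\m{B}\to\CoInd_{H_i}^G i_{H_i}^\ast\m{B}$ of the restriction--coinduction adjunction --- which is exactly the argument you relegate to your ``consistency check'' paragraph. Your primary construction is instead uniform and elementary: the map is $R_{\pi_S}$ levelwise, and Tambara-naturality is checked only on the generators $R_f$, $N_f$, $T_f$, with the transfer and norm cases following from the base-change relations applied to the pullback square formed by $f$, $\id_T\times f$, and the two projections (your identification of that square as a pullback is correct, and your remark that generators suffice is also correct, since naturality squares paste under composition and every polynomial factors as $T_h\circ N_g\circ R_f$). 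What your route buys: no choice of orbit decomposition, no appeal to the adjunction, and naturality in both $\m{B}$ and $T$ is visible directly from contravariant functoriality of $R$. The one input you use implicitly is that the canonical structure on $\m{B}_T$ from the cited result of Blumberg--Hill is the one induced by applying $\id_T\times(-)$ to polynomials, i.e.\ that $T\times(-)$ preserves pullbacks and exponential diagrams; that is precisely the content of the citation, so you are entitled to it. What the paper's route buys: it reuses the coinduction machinery that the surrounding section develops anyway (the same units describe the restriction maps of the mapping coefficient systems), so in context the proposition is ``formal,'' whereas your argument would let the proposition stand on its own earlier in the paper, before coinduction is discussed.
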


Using all of this we can define a version of this in the category of $\m{S}$-Tambara functors augmented to $\m{R}$.

\begin{definition}
If $\m{B}\to\m{R}$ is an $\m{S}$-Tambara functor augmented to $\m{R}$ and if $T$ is a finite $G$-set, then let $F(T,\m{B})$ be the pullback
\[
\xymatrix{
{F(T,\m{B})}\ar[r]\ar[d] &  {\m{B}_{T}} \ar[d] \\
{\m{R}}\ar[r] & {\m{R}_{T}.}
}
\]
\end{definition}

\begin{proposition}\label{prop:Semiring}
If $\m{B}$ is an $\m{S}$-Tambara functor augmented to $\m{R}$ and if $T_1$ and $T_2$ are finite $G$-sets, then we have a natural isomorphism
\[
F(T_1\times T_2,\m{B})\cong F\big(T_2,F(T_1,\m{B})\big).
\]
\end{proposition}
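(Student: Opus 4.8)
The plan is to realize both sides as iterated pullbacks and then invoke the pasting lemma for pullback squares, after first observing that everything in sight is computed levelwise. Limits in $\STamb_{/\m{R}}$ are created by the forgetful functor to coefficient systems of sets, so a pullback of augmented $\m{S}$-Tambara functors is computed at each finite $G$-set $X$ as an honest pullback of sets; in particular $F(T,\m{B})(X)=\m{R}(X)\times_{\m{R}(T\times X)}\m{B}(T\times X)$. Moreover, the shift functor $(-)_{T_2}=(-)(T_2\times-)$ is defined levelwise by precomposition with $T_2\times-$, so it commutes with these levelwise pullbacks. Combining this with the evident identifications $\m{B}_{T_1\times T_2}\cong(\m{B}_{T_1})_{T_2}$ and $\m{R}_{T_1\times T_2}\cong(\m{R}_{T_1})_{T_2}$, both induced by the associativity isomorphism $T_1\times(T_2\times X)\cong(T_1\times T_2)\times X$, I would obtain a natural isomorphism
\[
\big(F(T_1,\m{B})\big)_{T_2}\cong \m{R}_{T_2}\times_{\m{R}_{T_1\times T_2}}\m{B}_{T_1\times T_2}.
\]

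With this in hand I would assemble the large diagram
\[
\xymatrix{
{F(T_2,F(T_1,\m{B}))}\ar[r]\ar[d] & {\big(F(T_1,\m{B})\big)_{T_2}}\ar[r]\ar[d] & {\m{B}_{T_1\times T_2}}\ar[d] \\
{\m{R}}\ar[r] & {\m{R}_{T_2}}\ar[r] & {\m{R}_{T_1\times T_2},}
}
\]
in which the left square is a pullback by the definition of $F(T_2,-)$ and the right square is the pullback just identified. By the pasting lemma the outer rectangle is then a pullback, so $F(T_2,F(T_1,\m{B}))\cong \m{R}\times_{\m{R}_{T_1\times T_2}}\m{B}_{T_1\times T_2}$, where the bottom map is the composite $\m{R}\to\m{R}_{T_2}\to\m{R}_{T_1\times T_2}$. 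Naturality in $\m{B}$ will be automatic, since every functor used is functorial in $\m{B}$.

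It therefore remains to identify this composite with the single natural map $\m{R}\to\m{R}_{T_1\times T_2}$ appearing in the definition of $F(T_1\times T_2,\m{B})$, and this coherence is the one genuine point to check. The natural map $\m{C}\to\m{C}_T$ is, on underlying Mackey functors, the restriction map $R_{\pi_T\times\mathrm{id}}$ associated to the projection $\pi_T\colon T\to\ast$; since $R$ is a contravariant functor on $\Set^G$, the composite $\m{R}\to\m{R}_{T_2}\to(\m{R}_{T_2})_{T_1}$ is the restriction along the projection $T_1\times T_2\times X\to T_2\times X\to X$, which by functoriality of $R$ is exactly the restriction along $\pi_{T_1\times T_2}\times\mathrm{id}$ computing the direct map $\m{R}\to\m{R}_{T_1\times T_2}$. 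Hence the two bottom maps agree and the outer pullback is precisely $F(T_1\times T_2,\m{B})$, giving the desired natural isomorphism. The hard part is this last coherence: one must confirm that the abstractly defined unit maps of the restriction-coinduction adjunction really are the expected projection restriction maps, and that they compose correctly across the associativity isomorphism for the product of $G$-sets; everything else reduces to the levelwise formula for pullbacks and the pasting lemma.
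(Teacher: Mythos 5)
Your proof is correct and follows essentially the same route as the paper: both use associativity of the Cartesian product to identify $(\m{B}_{T_1})_{T_2}\cong\m{B}_{T_1\times T_2}$ and then recognize both sides as the pullback of the single cospan $\m{R}\to\m{R}_{T_1\times T_2}\leftarrow\m{B}_{T_1\times T_2}$. The only difference is that you spell out, via levelwise pullbacks, the pasting lemma, and the identification of the unit maps as restrictions along projections, the verification that the paper compresses into the one-line observation that both Tambara functors are pullbacks of the same diagram.
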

\begin{proof}
Since the Cartesian product of finite $G$-sets is associative up to natural isomorphism, we have a natural isomorphism
\[
(\m{B}_{T_1})_{T_2}\cong \m{B}_{T_1\times T_2}.
\]
The result then follows from observing that both Tambara functors are the pullback of the diagram 
\[
\xymatrix{
{} & {\m{B}_{T_1\times T_2}}\ar[d] \\
{\m{R}}\ar[r] & {\m{R}_{T_1\times T_2}.} 
}
\] 
\end{proof}

Having symmetric monoidal functors which act as symmetric monoidal powers indexed by a $G$-set is exactly one of the ways to parse the notion of a $G$-symmetric monoidal category \cite[Def. 3.3]{HHLocalization}, so we conclude the following \cite{HHLocalization}.

\begin{theorem}
With coinduction as categorical transfer maps, the category of Tambara functors augmented over $\m{R}$ becomes a $G$-symmetric monoidal category.  The internal tensoring with a finite $G$-set $T$ is given by the functors $F(T,-)$.
%Moreover, the forgetful functor 
%\[
%\m{\Tamb}_{/\m{R}}\to\m{\Mackey}_{\downarrow \m{R}}
%\]
%from Tambara functors augmented over $\m{R}$ to the comma category of Mackey functors over the underlying Mackey functor for $\m{R}$ is a strong $G$-symmetric monoidal functor.
\end{theorem}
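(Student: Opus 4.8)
The plan is to read the $G$-symmetric monoidal structure directly off the family $\{F(T,-)\}_{T\in\Set^G}$ and to check that, together with coinduction, this data satisfies the axioms of \cite[Def. 3.3]{HHLocalization}. The two coherence-producing isomorphisms are already in hand. Proposition~\ref{prop:Semiring} supplies the ``exponential'' law $F(T_1\times T_2,\m{B})\cong F(T_2,F(T_1,\m{B}))$, which says that iterating the indexed powers along $T_1$ and then $T_2$ agrees with the single power indexed by $T_1\times T_2$; and the disjoint-union splitting $\m{B}_{T_1\amalg T_2}\cong\m{B}_{T_1}\times\m{B}_{T_2}$, passed through the pullback defining $F$, yields $F(T_1\amalg T_2,\m{B})\cong F(T_1,\m{B})\times_{\m{R}}F(T_2,\m{B})$, i.e. additivity of the powers over disjoint unions into the product of $\STamb_{/\m{R}}$. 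The natural maps $\m{B}\to F(T,\m{B})$ furnish the structure maps attached to maps of $G$-sets, the relative coinduction $F_H(G,-)$ supplies the categorical transfers, and the identification $F(G/H,\m{B})\cong\CoInd_H^G i_H^\ast\m{B}$ records how the whole structure restricts to the subgroups $H\subset G$.

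First I would record that every piece of this data is manufactured from the single assignment $\m{B}\mapsto\m{B}(T\times-)$ followed by the fibered product over $\m{R}$. Since $T\times-$ is a symmetric monoidal functor on finite $G$-sets and pullbacks are computed levelwise, naturality of all of the comparison maps follows immediately from the universal property of the pullback; concretely, the isomorphisms of Proposition~\ref{prop:Semiring} and of the disjoint-union splitting are nothing but the images under $F$ of the associativity and distributivity isomorphisms of $(\Set^G,\times,\amalg)$.

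The coherence verifications are where the real work lies, and I expect them to be the main obstacle. One must check that the exponential isomorphisms satisfy the pentagon forced by a threefold product $T_1\times T_2\times T_3$, that the additivity isomorphisms are coherent and symmetric, and---most delicately---that the multiplicative and additive structures interact correctly, i.e. that the exponential diagrams encoding the distributivity of $\times$ over $\amalg$ among finite $G$-sets are respected by the functors $F(T,-)$. Each such condition reduces to a corresponding coherence for the Cartesian product and disjoint union in $\Set^G$, which hold because $\Set^G$ is complete, cocomplete, and distributive; the genuine burden is to confirm that applying $\m{B}(-\times-)$ and then forming the fibered products over $\m{R}$ transports these coherences faithfully. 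As all of the constructions are limits and all comparison maps are induced by maps of the indexing $G$-sets, this transport is formal, and the theorem follows by matching the assembled data against \cite[Def. 3.3]{HHLocalization}.
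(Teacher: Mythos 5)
Your proposal is correct and takes essentially the same route as the paper: the paper likewise assembles the exponential law of Proposition~\ref{prop:Semiring}, the splitting $\m{B}_{T_1\amalg T_2}\cong\m{B}_{T_1}\times\m{B}_{T_2}$, the natural maps $\m{B}\to F(T,\m{B})$, and coinduction as the categorical transfer, and then simply observes that this data is exactly what \cite[Def. 3.3]{HHLocalization} asks of a $G$-symmetric monoidal category. The coherence verifications you identify as the main burden are precisely what the paper delegates to that citation, treating the transport from $(\Set^G,\times,\amalg)$ as formal just as you do.
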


This lets us reformulate Strickland's definition. In some sense, this proposition has no real content: it is an immediate reformulation of Strickland's result.

\begin{proposition}
The category $\SAb_{/\m{R}}$ is the category of group-like commutative monoids in $\m{S}\mhyphen\m{\Tamb}_{/\m{R}}$.
\end{proposition}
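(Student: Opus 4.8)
The plan is to run the Yoneda argument that the preceding propositions have been assembling, so that both sides become statements about the represented functor $\m{S}\mhyphen\m{\Tamb}_{/\m{R}}(-,\m{B})$. First I would unwind what a group-like commutative monoid with respect to the just-constructed $G$-symmetric monoidal structure is: a unit, a multiplication, an inverse, and $T$-indexed structure maps $F(T,\m{B})\to\m{B}$ for every finite $G$-set $T$, coherently compatible with the internal tensoring $F(T,-)$. Applying $\m{S}\mhyphen\m{\Tamb}_{/\m{R}}(\m{C},-)$ and using the representability $\m{S}\mhyphen\m{\Tamb}_{/\m{R}}(\m{C},F(T,\m{B}))\cong\m{S}\mhyphen\m{\Tamb}_{/\m{R}}(\m{C},\m{B})(T)$ together with the enrichment in coefficient systems, the norm structure map $F(G/H,\m{B})\to\m{B}$ induces by the Yoneda lemma a natural transfer $\m{S}\mhyphen\m{\Tamb}_{/\m{R}}(\m{C},\m{B})(G/H)\to\m{S}\mhyphen\m{\Tamb}_{/\m{R}}(\m{C},\m{B})(G/G)$, with the relative coinductions $F_H(K,-)$ and the disjoint-union folds supplying the transfers at all levels. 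Thus a commutative monoid structure on $\m{B}$ is exactly the data upgrading the product-preserving functor $\m{S}\mhyphen\m{\Tamb}_{/\m{R}}(-,\m{B})\colon(\Set^{G,\amalg})^{op}\to\Set$ to a product-preserving functor on $\cP^G_{Iso}$, naturally in $\m{C}$.

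Next I would invoke the identification recalled above of product-preserving functors $\cP^G_{Iso}\to\Set$ with semi-Mackey functors, and of the group-complete ones with Mackey functors. This turns the previous paragraph into the assertion that a group-like commutative monoid structure on $\m{B}$ is the same as a natural lift of $\m{S}\mhyphen\m{\Tamb}_{/\m{R}}(-,\m{B})$ to a Mackey functor. I would then compare this with the earlier Proposition, which says that an abelian group structure on $\m{B}\to\m{R}$ is a natural lift of the same represented coefficient system to a coefficient system of abelian groups.

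To close the gap between "coefficient system of abelian groups" and "Mackey functor," I would observe that the two packages of data agree for free. The underlying coefficient system of a Mackey-functor lift is visibly a coefficient system of abelian groups. Conversely, since the $G$-symmetric monoidal structure in question is the $G$-Cartesian one (the binary power $F(\{1,2\},\m{B})$ being the fibre product $\m{B}\times_{\m{R}}\m{B}$, the categorical product in the comma category), the $T$-indexed multiplications $F(T,\m{B})\to\m{B}$ are generated by the binary multiplication, so each norm structure map is already determined by the underlying abelian group object. Hence the transfers on the represented functor are forced, the Mackey relations (double coset and Frobenius reciprocity) hold automatically from commutativity, associativity, unitality, and naturality of the multiplication, and the two lifts carry identical information. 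Since all of these bijections are natural in $\m{C}$ and compatible with maps of $\m{B}$, they assemble into an isomorphism of categories between $\SAb_{/\m{R}}$ and the group-like commutative monoids in $\m{S}\mhyphen\m{\Tamb}_{/\m{R}}$.

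The step I expect to be the only real, though still purely formal, obstacle is the last one: verifying that passing to the $G$-symmetric monoidal structure adds no genuine data, i.e. that the norm structure maps are forced by the underlying abelian group object and that no Mackey relations are lost in the translation. Everything else is bookkeeping layered on top of Strickland's result and the representability already established, which is why the statement is, as advertised, a reformulation rather than a new theorem.
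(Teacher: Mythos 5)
Your argument goes wrong at the very first step, in how you read the statement, and this misreading is what produces the false claim you yourself flag as the ``only real obstacle.'' The proposition concerns ordinary commutative monoids in $\m{S}\mhyphen\m{\Tamb}_{/\m{R}}$, i.e.\ monoids for the binary symmetric monoidal product underlying the $G$-symmetric monoidal structure, which is the Cartesian one: $F(\{1,2\},\m{B})\cong\m{B}\times_{\m{R}}\m{B}$ is the categorical product in $\STamb_{/\m{R}}$. Read this way the statement is the standard tautology that abelian group objects in a category with finite products are exactly the group-like commutative monoids for the Cartesian monoidal structure; this is why the paper supplies no proof and says the proposition ``has no real content.'' The $G$-commutative monoids, carrying structure maps $F(T,\m{B})\to\m{B}$ for every finite $G$-set $T$, are introduced only in the following paragraph of the paper, precisely because they are a strictly richer notion; group-like $G$-commutative monoids are what the paper names Mackey functor objects, $\Mackey_{/\m{R}}$, and the entire point of the section is that $\Mackey_{/\m{R}}$ is \emph{not} all of $\SAb_{/\m{R}}$.

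Because you interpreted ``commutative monoid'' as ``$G$-commutative monoid,'' your third paragraph is forced to argue that the $T$-indexed multiplications are generated by the binary one, and that claim is false --- it is exactly the error the paper is written to correct. The twisted power $F(G/H,\m{B})$ is a relative coinduction built from $\CoInd_H^G i_H^\ast\m{B}$, not an iterated fibre product (only for $T$ with trivial action is $F(T,\m{B})$ a fibre power of $\m{B}$ over $\m{R}$), so the structure map $F(G/H,\m{B})\to\m{B}$ is genuinely extra data: the abelian group object structure determines only the restrictions of the norms to subgroups, never the norms themselves. Concretely, if your bridge were valid it would prove $\SAb_{/\m{R}}\simeq\Mackey_{/\m{R}}$, hence by the paper's closing corollary of that section $\SAb_{/\m{R}}\simeq\Mods{\m{R}}$, contradicting Strickland's Proposition 14.7 (quoted in the paper), which says that $\m{R}\semidirect(-)\colon\Mods{\m{R}}\to\SAb_{/\m{R}}$ is \emph{not} essentially surjective. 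The Yoneda bookkeeping in your first two paragraphs is fine in itself --- it is essentially the paper's subsequent proposition that a group-like $G$-commutative monoid represents a Mackey-functor-valued functor --- but for the proposition actually being asked, all that is needed is the binary, Cartesian case, where the identification with abelian group objects is definitional.
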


Since $\m{\Tamb}_{/\m{R}}$ is a $G$-symmetric monoidal category, we have a notion of $G$-commutative monoids \cite[Def. 3.8]{HHLocalization}. 

%\begin{definition}
%The $G$-Cartesian monoidal structure on the category of set-valued coefficient systems is the coinduction $G$-symmetric monoidal category structure. The internal exponentiation by a finite $G$-set $T$ is given by
%\[
%\m{C}\mapsto \m{C}_T:=(T'\mapsto\m{C}(T\times T').
%\]
%\end{definition}
%

\begin{proposition}
If $\m{B}\to\m{R}$ is a group-like $G$-commutative monoid in $\m{\Tamb}_{/\m{R}}$, then for all $\m{C}\to\m{R}$, the coefficient system
\[
\m{\Tamb}_{/\m{R}}(\m{C},\m{B})
\]
has natural extension to a Mackey functor.
\end{proposition}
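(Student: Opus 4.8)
The plan is to exhibit the required Mackey functor structure by transporting the $G$-commutative monoid structure of $\m{B}$ through a corepresentable functor. First I would record the representability already assembled above: the coefficient system $\m{M}:=\m{\Tamb}_{/\m{R}}(\m{C},\m{B})$ extends to a product-preserving functor on $\big(\Set^{G,\amalg}\big)^{op}$ with
\[
\m{M}(T)\cong \Hom_{\m{\Tamb}_{/\m{R}}}\big(\m{C},F(T,\m{B})\big),
\]
the restriction maps being induced by the contravariant functoriality of $F(-,\m{B})$ (the maps $\m{B}\to F(G/H,\m{B})$). Thus the contravariant (restriction) half of a Mackey functor is already in hand; what remains is to produce transfers and to verify the semi-Mackey relations, after which group-likeness upgrades the result to a genuine Mackey functor.

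The transfers come from the norm maps of the $G$-commutative monoid. Because $F(T,-)$ is the internal $T$-indexed product for the $G$-symmetric monoidal structure, a $G$-commutative monoid structure on $\m{B}$ is precisely a coherent, functorial family of norm maps $N_\pi\colon F(T,\m{B})\to F(T',\m{B})$ for every map $\pi\colon T\to T'$ of finite $G$-sets, compatible with the product decomposition $F(T_1\amalg T_2,\m{B})\cong F(T_1,\m{B})\times F(T_2,\m{B})$ and with base change. Applying the product-preserving corepresentable $\Hom_{\m{\Tamb}_{/\m{R}}}(\m{C},-)$ converts each $N_\pi$ into a covariant map $\tr_\pi\colon\m{M}(T)\to\m{M}(T')$; taking $\pi$ to be the projection $G/H\to G/K$ for $H\subset K$ gives the transfer $\tr_H^K$. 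Since $\Hom$ preserves products, the addition on each $\m{M}(T)$ is the transfer along the fold map $T\amalg T\to T$, which is exactly how a product-preserving functor on $\cP^G_{Iso}$ encodes its abelian structure; one checks this agrees with the additive structure supplied by the underlying group-like commutative monoid.

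With restrictions and transfers in hand, the content is to see that they organize into a product-preserving functor $\cP^G_{Iso}\to\Set$, i.e. a semi-Mackey functor. Functoriality of $\tr_\pi$ in $\pi$ follows from functoriality of the norm family, $N_{\pi'\pi}=N_{\pi'}\circ N_\pi$, and compatibility of restriction with transfer along a single map is immediate from naturality. Group-likeness of the monoid makes each $\m{M}(T)$ group-complete, so the resulting semi-Mackey functor is in fact a Mackey functor, whose restriction maps agree with the given coefficient-system structure on $\m{M}$.

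The main obstacle is the double coset formula, i.e. the base-change relation $\res^K_L\,\tr^K_H=\sum_{LgH}\tr^L_{L\cap{}^gH}\,c_g\,\res^{H}_{L^g\cap H}$. On the categorical side this is the compatibility of the norm maps with pullback: for the pullback square expressing $i_L^\ast(G/H)\cong\coprod_{LgH} L/(L\cap{}^gH)$ over $G/K$, one must match $\res\circ N_\pi$ with the norm of the base-changed map followed by the induced restriction. This is governed by the relation $R_g\circ N_f=N_{f'}\circ R_{g'}$ in $\cP^G$ together with the product formula $F(\amalg)\cong\prod F$, and verifying that the corepresentable faithfully transports this coherence into the Mackey double coset identity is the crux of the argument; everything else is formal bookkeeping inherited from the $G$-symmetric monoidal coherences of $\m{\Tamb}_{/\m{R}}$.
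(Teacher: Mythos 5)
You have the right transport mechanism---corepresentability of the coefficient system together with the identification $\m{M}(T)\cong\STamb_{/\m{R}}\big(\m{C},F(T,\m{B})\big)$---and your description of the restrictions and of where the transfers come from matches the paper's construction. But your proposal stops exactly where the real content begins: you name the double coset compatibility as ``the crux of the argument'' and then assert, without proof, that it is ``governed by'' the pullback relation and the product decomposition. That verification is not formal bookkeeping; it is precisely the statement that a coherent family of norms in the coinduction $G$-symmetric monoidal structure on coefficient systems assembles into a Mackey functor, and as written your argument has a hole at that point. A related soft spot: your unpacking of a $G$-commutative monoid as a family of maps $N_\pi\colon F(T,\m{B})\to F(T',\m{B})$ ``compatible with base change'' quietly builds into the hypotheses the very coherence you later say must be checked, so it is never clear in your write-up what is given data and what remains to be proven.

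The paper closes this gap with a packaging move that your proposal is missing. Proposition~\ref{prop:Semiring} gives a natural isomorphism $F(T,\m{B})_{\m{C}}\cong N^T(\m{B}_{\m{C}})$, where $(N^T\mM)(T'):=\mM(T\times T')$ is the coinduction $G$-symmetric monoidal structure on coefficient systems; hence the corepresentable functor $\STamb_{/\m{R}}(\m{C},-)$ carries the $G$-commutative monoid structure on $\m{B}$ to a $G$-commutative monoid structure on the coefficient system $\m{B}_{\m{C}}$ itself. The proof then invokes \cite[Thm. 5.6]{HHLocalization}: a group-like $G$-commutative monoid in coefficient systems with this structure \emph{is} exactly a Mackey functor. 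All of the coherence you were planning to verify by hand---functoriality of the transfers, their interaction with restriction, and in particular the double coset formula---is the content of that cited theorem, not something to be rederived inside this proof. So either insert that citation after your transport step (at which point your argument becomes the paper's), or accept that your plan obligates you to actually reprove that theorem; as it stands, the proposal does neither, and the crux is left as an acknowledged but unproven assertion.
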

\begin{proof}
Let $\m{C}\to\m{R}$ be a Tambara functor augmented to $\m{R}$, and let 
\[
\m{B}_{\m{C}}:=\m{S}\mhyphen\m{\Tamb}_{/\m{R}}(\m{C},\m{B})
\]
be the coefficient system in question. %Since $\m{B}$ is a group-like, this is naturally a coefficient system of abelian groups. 
By construction, the value of this at a finite $G$-set $T$ is given by
\[
\m{B}_{\m{C}}(T):=\STamb_{/\m{R}}\big(\m{C},F(T,\m{B})\big).
\]
In particular, Proposition~\ref{prop:Semiring} shows that we have a natural isomorphism of coefficient systems
\[
F(T,\m{B})_{\m{C}}\cong N^T(\m{B}_{\mC}),
\]
where $N^T$ is the endo-functor on coefficient systems of sets given by
\[
(N^T\mM)(T'):=\mM(T\times T').
\]

By naturality, the $G$-commutative monoid structure of $\m{B}$ makes $\m{B}_{\m{C}}$ a $G$-commutative monoid in the coinduction $G$-symmetric monoidal structure on coefficient systems. By \cite[Thm. 5.6]{HHLocalization}, this is exactly a Mackey functor structure on $\m{B}_{\m{C}}$.
\end{proof}

\begin{definition}
A Mackey functor object in $\Tamb_{/\m{R}}$ is a group-like $G$-commutative monoid in $\m{\Tamb}_{/\m{R}}$. The category of Mackey functor objects and maps is denoted $\Mackey_{/\m{R}}$.
\end{definition}

We can immediately produce a collection of such objects. Recall that a strong $G$-symmetric monoidal functor between $G$-symmetric monoidal categories is one for which we have natural isomorphism
\[
F\big(N^T(-)\big)\Rightarrow N^T\big(F(-)\big).
\]

\begin{proposition}
The functor
\[
\m{R}\semidirect(-)\colon\Mods{\m{R}}\to\STamb_{/\m{R}}
\]
is a strong $G$-symmetric monoidal functor.
\end{proposition}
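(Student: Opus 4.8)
\subsection*{Proof proposal}

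The plan is to produce, for every finite $G$-set $T$ and every $\m{R}$-module $\mM$, a natural isomorphism in $\STamb_{/\m{R}}$ of the form
\[
\Phi_T\colon \m{R}\semidirect N^T(\mM)\xrightarrow{\ \cong\ } F\big(T,\m{R}\semidirect\mM\big),
\]
and then to check that the family $\{\Phi_T\}$ is compatible with the coherence data of the two $G$-symmetric monoidal structures. Here the norm $N^T$ on $\Mods{\m{R}}$ is the functor lifting the coefficient-system norm $(N^T\mM)(T')=\mM(T\times T')$: it sends $\mM$ to the coefficient system $\mM(T\times-)$, which we write $\mM_T$, regarded as an $\m{R}$-module through the unit map $\m{R}\to\m{R}_T$; for $T=G/H$ this is $\CoInd_H^G i_H^\ast\mM$. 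On the target side, $N^T=F(T,-)$ is by definition the pullback of $\m{R}\to\m{R}_T\leftarrow(\m{R}\semidirect\mM)_T$.

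The construction of $\Phi_T$ rests on one key lemma: that the endofunctor $(-)_T=(-)(T\times-)$ carries a square-zero extension to a square-zero extension, i.e.
\[
(\m{R}\semidirect\mM)_T\cong \m{R}_T\semidirect\mM_T
\]
as Tambara functors, where $\mM_T$ carries trivial norms. Granting this, the defining pullback for $F(T,\m{R}\semidirect\mM)$ is the pullback of $\m{R}\to\m{R}_T\leftarrow\m{R}_T\semidirect\mM_T$ along the augmentation; level-wise this is $\m{R}(T')\oplus\mM_T(T')$ with the square-zero Green structure in which $\mM_T$ becomes an $\m{R}$-module via $\m{R}\to\m{R}_T$, which is precisely $\m{R}\semidirect N^T(\mM)$. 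This gives $\Phi_T$ on underlying Green functors and on modules, and one then reads off that the augmentation to $\m{R}$ and the $\m{S}$-structure agree on both sides.

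To prove the key lemma I would first reduce to orbits. The decomposition $\m{B}_{T_1\amalg T_2}\cong\m{B}_{T_1}\times\m{B}_{T_2}$, together with the fact that a finite product of square-zero extensions is again a square-zero extension, reduces the claim to $T=G/H$. For $T=G/H$ we use the identification $\m{B}_{G/H}\cong\CoInd_H^G i_H^\ast\m{B}$, so that
\[
(\m{R}\semidirect\mM)_{G/H}\cong\CoInd_H^G\big(i_H^\ast\m{R}\semidirect i_H^\ast\mM\big),
\]
and it remains to see that coinduction sends the $H$-equivariant square-zero extension to the $G$-equivariant one. On underlying Green functors this is immediate, since $\CoInd_H^G$ and restriction are computed level-wise. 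The main obstacle is the norm structure: the norms of $\m{B}_{G/H}=\CoInd_H^G i_H^\ast\m{B}$ are given by $N_f:=N_{i_H^\ast f}$, and I must verify that these, applied to the module summand $\CoInd_H^G i_H^\ast\mM$, remain trivial. This is exactly the assertion that the exponential-diagram formula defining the norms in $(-)_T$ introduces no genuine norm on a trivial-norm summand; since the norms on $\CoInd$ are literally the restricted norms of the square-zero extension $i_H^\ast\m{R}\semidirect i_H^\ast\mM$, whose module summand has trivial norms by definition, the coinduced norms on $\CoInd_H^G i_H^\ast\mM$ are trivial as well.

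Finally, to upgrade the isomorphisms $\Phi_T$ to the data of a strong $G$-symmetric monoidal functor I would verify compatibility with the unit and with composition of norms. For the unit $T=\ast$ both $N^\ast$ and $F(\ast,-)$ are the identity and $\Phi_\ast=\id$. For composition, the isomorphism $N^{T_1\times T_2}\cong N^{T_2}N^{T_1}$ on $\Mods{\m{R}}$ and the isomorphism $F(T_1\times T_2,-)\cong F\big(T_2,F(T_1,-)\big)$ of Proposition~\ref{prop:Semiring} both arise from the associativity isomorphism $\mM\big((T_1\times T_2)\times-\big)\cong\mM\big(T_1\times(T_2\times-)\big)$; checking that $\Phi_{T_1\times T_2}$ equals the evident composite of $\Phi_{T_2}$, $F(T_2,\Phi_{T_1})$, and the isomorphism of Proposition~\ref{prop:Semiring} then reduces to this single associativity constraint on Cartesian products of $G$-sets, which holds by naturality.
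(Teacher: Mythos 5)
Your proposal is correct and follows essentially the same route as the paper: identify $F(T,\m{R}\semidirect\mM)$ with the square-zero extension $\m{R}\semidirect\mM_T$ at the level of underlying Mackey functors, and then observe that the augmentation ideal has trivial norms and products on both sides, so the identification is one of Tambara functors. Your extra steps (reduction to orbits, the coinduction check on norms, and the unit/composition coherence) simply expand details that the paper's terse proof leaves implicit.
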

\begin{proof}
The underlying Mackey functors for $\CoInd_H^G$ and for $F(T,-)$ are determined by the corresponding functors on Mackey functors. In this case, we have natural isomorphisms of Mackey functors augmented to $\m{R}$:
\[
F\big(T,\m{R}\oplus \mM\big)\cong\m{R}\oplus \mM_T.
\]
In both cases, the augmentation ideal has trivial norms and products, meaning that this identification is also one of Tambara functors.
\end{proof}
\begin{corollary}
The functor 
\[
\m{R}\semidirect(-)\colon\Mods{\m{R}}\to\STamb_{/\m{R}}
\]
lifts to a functor to $\Mackey_{/\m{R}}$.
\end{corollary}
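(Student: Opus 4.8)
The plan is to deduce this formally from the preceding proposition. By definition $\Mackey_{/\m{R}}$ is the category of group-like $G$-commutative monoids in $\m{\Tamb}_{/\m{R}}$, and a lax (in particular a strong) $G$-symmetric monoidal functor carries $G$-commutative monoids to $G$-commutative monoids and group-like ones to group-like ones. Since the preceding proposition exhibits $\m{R}\semidirect(-)$ as a strong $G$-symmetric monoidal functor, all that remains is to equip each object of the source with a canonical group-like $G$-commutative monoid structure and to feed it through.

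First I would make explicit the $G$-symmetric monoidal structure on $\Mods{\m{R}}$: as in the proof of the preceding proposition, its internal norm $N^T$ sends $\mM$ to $\mM_T$, with $N^{G/H}\mM\cong\CoInd_H^G i_H^\ast\mM$ on underlying Mackey functors, so this is precisely the coinduction structure restricted from coefficient systems. In this structure every object is canonically a group-like $G$-commutative monoid: an object of $\Mods{\m{R}}$ is in particular a Mackey functor, and under the identification used earlier (via \cite[Thm. 5.6]{HHLocalization}) a Mackey functor is exactly a group-like $G$-commutative monoid in the coinduction structure, its norm-structure maps $N^{G/H}\mM=\CoInd_H^G i_H^\ast\mM\to\mM$ being the transfers and its group-likeness coming from the abelian-group levels. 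This assignment is visibly natural, so it defines a functor from $\Mods{\m{R}}$ to group-like $G$-commutative monoids refining the identity.

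Composing this canonical structure with the strong monoidal functor $\m{R}\semidirect(-)$ then produces on each square-zero extension a group-like $G$-commutative monoid structure in $\m{\Tamb}_{/\m{R}}$, i.e. an object of $\Mackey_{/\m{R}}$, and naturality of the coherence data makes the whole assignment a functor lifting $\m{R}\semidirect(-)$. Given the preceding proposition the argument is essentially formal, so the only place that calls for care --- and the step I would actually check --- is that the coherence isomorphisms $F(T,\m{R}\semidirect\mM)\cong\m{R}\semidirect\mM_T$ transport the norm-multiplication maps of $\mM$ into a \emph{coherent} $G$-commutative monoid rather than a merely level-wise one; this uses the strongness of the monoidal structure together with the compatibility with iterated norms furnished by Proposition~\ref{prop:Semiring}. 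Group-likeness is then immediate, since it is detected on the underlying additive structure, where $\m{R}\semidirect(-)$ is the identity on the $\mM$-summand.
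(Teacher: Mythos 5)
Your proposal is correct and takes essentially the same route as the paper: the paper's proof consists precisely of the two observations that any Mackey functor (hence any $\m{R}$-module) is canonically a group-like $G$-commutative monoid in the coinduction structure, and that a strong $G$-symmetric monoidal functor preserves group-like $G$-commutative monoids. Your write-up merely makes explicit the coinduction $G$-symmetric monoidal structure on $\Mods{\m{R}}$ and the coherence checks (via Proposition~\ref{prop:Semiring}) that the paper leaves implicit.
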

\begin{proof}
Any Mackey functor is a group-like $G$-commutative monoid. A strong $G$-symmetric monoidal functor preserves these. 
\end{proof}

We would like to better understand the category of Mackey functor objects augmented to $\m{R}$, and for this, we unpack some the externalized transfer maps. It is helpful to compare these with the transfer maps in the underlying Mackey functors.

\begin{lemma}\label{lem:MackeyUniqueness}
Any Mackey functor has a unique structure as a $G$-commutative monoid.
\end{lemma}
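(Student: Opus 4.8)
The plan is to reduce the statement to two rigidity facts: that an abelian group carries a unique group-like commutative monoid structure (Eckmann--Hilton), and that every morphism in the relevant span category is a composite of restrictions and transfers. Existence of a $G$-commutative monoid structure is not the issue: by \cite[Thm. 5.6]{HHLocalization} every Mackey functor is realized as a group-like $G$-commutative monoid in the coinductive structure on coefficient systems, and conversely such a monoid has an underlying Mackey functor. Thus the entire content is uniqueness, namely that the passage from a group-like $G$-commutative monoid to its underlying coefficient system, levelwise abelian group structure, and transfer maps retains all of the monoid structure.

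First I would re-express the $G$-commutative monoid structure concretely. By \cite[Thm. 5.6]{HHLocalization} it is the same as an extension of the underlying coefficient system $\m{M}$ to a product-preserving functor $\cP^{G}_{Iso}\to\Set$, which is group-complete, hence a Mackey functor, since $\m{M}$ is group-like. Every morphism of $\cP^{G}_{Iso}$ is represented by a polynomial $S\xleftarrow{f} U\xrightarrow{g} V\xrightarrow{h} T$ with $g$ an isomorphism, and so, up to the evident identifications, is a composite $T_h\circ R_f$ of a restriction followed by a transfer. Consequently every structure map of the monoid is generated under composition and products by the restriction maps $R_f$ and the transfer maps $T_h$.

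Next I would identify these generators with data already recorded by the Mackey functor. The restrictions $R_f$ are exactly the coefficient-system structure, which is fixed at the outset. The transfers split into two families. Along a fold $\nabla\colon T\amalg T\to T$, the map $T_\nabla$ composed with the product-preservation isomorphism $\m{M}(T\amalg T)\cong\m{M}(T)\times\m{M}(T)$ is precisely the levelwise addition; here the Eckmann--Hilton argument guarantees rigidity, since this operation is a unital morphism of coefficient systems of abelian groups and hence must coincide with the given addition, with unit the zero section. Along maps of orbits, the transfers $T_h$ are exactly the transfer maps of the Mackey functor. In every case the generator is determined by the underlying Mackey functor, and since all structure maps are composites of generators, the monoid structure is uniquely determined.

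The step requiring the most care is the bookkeeping that translates the abstract norm functors $N^T$ of the $G$-symmetric monoidal framework (\cite[Def. 3.8]{HHLocalization}) into the concrete restriction and transfer generators, and that checks the coherence isomorphisms introduce no additional freedom. Once the generating maps are pinned to restrictions and transfers, the argument is formal; the only genuinely non-formal input is the Eckmann--Hilton rigidity forcing the derived addition to be the ambient abelian group structure, compatibly with all restrictions. This is the equivariant shadow of the elementary fact that an abelian group object admits a unique commutative monoid structure, which is what makes the uniqueness work levelwise before it is propagated through the transfers.
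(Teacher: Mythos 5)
Your argument has a genuine gap at the step ``Along maps of orbits, the transfers $T_h$ are exactly the transfer maps of the Mackey functor.'' That assertion is the entire content of the lemma, and nothing in your reduction forces it. Once you invoke \cite[Thm. 5.6]{HHLocalization} to identify a $G$-commutative monoid structure with an extension of the underlying coefficient system to a product-preserving functor $\cP^{G}_{Iso}\to\Set$, what you obtain is \emph{some} system of transfers on the underlying coefficient system of $\mM$; you still have to show these coincide with the transfers $\mM$ already carries. This cannot be done at the level where you are working, because a coefficient system does not determine its Mackey functor structure. For $G=C_2$, take both levels to be $\Z/2$ with trivial Weyl action and zero restriction: then both $\tr=0$ and $\tr=\id$ define Mackey functor structures (the only double coset relation constrains $\res\circ\tr$, which vanishes either way), so this coefficient system carries two distinct $G$-commutative monoid structures in coefficient systems. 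Uniqueness is therefore false in the setting to which you have reduced the problem; your Eckmann--Hilton step rigidifies only the additive (fold) part of the structure, which was never in doubt.

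The input you are missing---and the one on which the paper's proof turns---is that the structure maps of a $G$-commutative monoid on the Mackey functor $\mM$ are morphisms of \emph{Mackey functors}, i.e., they commute with the transfers $\mM$ already possesses. The paper exploits this as follows: since induction and coinduction agree for Mackey functors, $\CoInd_H^G$ is a left adjoint to $i_H^\ast$ as well as a right adjoint, so a map of Mackey functors $\mTr_H^G\colon F(G/H,\mM)=\CoInd_H^G i_H^\ast\mM\to\mM$ is determined by its adjoint $i_H^\ast\mM\to i_H^\ast\mM$, and the unit axiom of the $G$-commutative monoid structure forces that adjoint to be the identity; hence $\mTr_H^G$ is the adjoint of the identity and is unique. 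In the $C_2$ example above, this is exactly what rules out the spurious structure: the transfer $\tr=\id$ does not arise from a Mackey functor map $\CoInd_e^{C_2}i_e^\ast\mM\to\mM$ satisfying the unit condition when $\mM$ is given the zero transfer. To salvage your approach you would need to record, in addition to the $\cP^{G}_{Iso}$-functoriality, the compatibility of the external structure maps with the original transfers of $\mM$, and at that point the adjunction argument is the efficient way to conclude.
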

\begin{proof}
In Mackey functors, coinduction and induction agree. In particular, $CoInd_H^G$ is the left-adjoint to the forgetful functor as well as the right, and hence a map
\[
F(G/H,\mM)=\CoInd_H^G i_H^\ast\mM\xrightarrow{\mTr_H^G}\mM
\]
is determined by its adjoint $i_H^\ast\mM\to i_H^\ast \mM$. The adjoint can be computed as
\[
i_H^\ast\mM\to i_H^\ast CoInd_H^G i_H^\ast\mM\cong i_H^\ast F(G/H,\mM)\cong F(i_H^\ast G/H,i_H^\ast\mM)\xrightarrow{i_H^\ast \mTr_H^G} i_H^\ast\mM,
\]
where the first map is the unit of the adjunction. This corresponds to the inclusion $H/H\hookrightarrow i_H^\ast G/H$, and the composite is then just the identity map. Thus $\mTr_H^G$ must be the adjoint to the identity map on $i_H^\ast\mM$, and hence is uniquely determined.
\end{proof}
%\begin{remark}
%This lemma is the reason for calling this $G$-symmetric monoidal structure $G$-Cartesian: it is the obvious equivariant analogue of a Cartesian monoidal structure and the commutative monoids therein. Work of Berman has explored this extensively \cite{Berman}.
%\end{remark}

\begin{corollary}\label{cor:TransfersAreMorphisms}
If $\m{B}\in\Mackey_{/\m{R}}$, then all external transfer maps in $\m{B}$ are maps of Tambara functors.
\end{corollary}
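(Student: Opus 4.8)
The plan is to realize each external transfer as a natural transformation between representable presheaves on $\STamb_{/\m{R}}$ and then appeal to the Yoneda lemma: the representing morphism lives in $\STamb_{/\m{R}}$ by construction, and morphisms of that category are by definition maps of (augmented $\m{S}$-)Tambara functors. Thus the whole difficulty is in producing the natural transformation; once it exists, the conclusion that the external transfer $\mTr_H^G\colon F(G/H,\m{B})\to\m{B}$ is a Tambara map is automatic.

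First I would recall, from the proposition immediately preceding, that for every $\m{C}\to\m{R}$ the coefficient system $\m{B}_{\m{C}}:=\m{S}\mhyphen\m{\Tamb}_{/\m{R}}(\m{C},\m{B})$ extends canonically to a Mackey functor, with
\[
\m{B}_{\m{C}}(G/H)=\STamb_{/\m{R}}\big(\m{C},F(G/H,\m{B})\big)\quad\text{and}\quad \m{B}_{\m{C}}(G/G)=\STamb_{/\m{R}}(\m{C},\m{B}),
\]
using that $F(G/G,\m{B})\cong\m{B}$. The external transfer of $\m{B}$ is exactly the Mackey-functor transfer of these $\m{B}_{\m{C}}$ as $\m{C}$ ranges over $\STamb_{/\m{R}}$.

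The key step, where Lemma~\ref{lem:MackeyUniqueness} does the real work, is to upgrade these level-wise transfers to something natural in $\m{C}$. Because each $\m{B}_{\m{C}}$ is a Mackey functor, its $G$-commutative monoid structure is \emph{unique}; consequently any map arising from functoriality in $\m{C}$ — in particular the restriction map $\m{B}_{\m{C}'}\to\m{B}_{\m{C}}$ induced by a morphism $\m{C}\to\m{C}'$ — is forced to be a map of $G$-commutative monoids and hence to commute with transfers. Therefore the transfers $\mTr_H^G\colon\m{B}_{\m{C}}(G/H)\to\m{B}_{\m{C}}(G/G)$ assemble into a natural transformation
\[
\STamb_{/\m{R}}\big(-,F(G/H,\m{B})\big)\Rightarrow\STamb_{/\m{R}}(-,\m{B})
\]
of presheaves on $\STamb_{/\m{R}}$. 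By the Yoneda lemma this transformation is represented by a morphism $\mTr_H^G\colon F(G/H,\m{B})\to\m{B}$ in $\STamb_{/\m{R}}$, and since morphisms there are precisely maps of Tambara functors compatible with the augmentation to $\m{R}$, the external transfer is a map of Tambara functors. I expect the main obstacle to be precisely this naturality: without the uniqueness of Lemma~\ref{lem:MackeyUniqueness} there is no reason the contravariant functoriality in $\m{C}$ should respect the transfer structure, and the representing morphism — and with it the entire argument — would fail to exist.
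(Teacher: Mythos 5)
Your overall plan (realize the transfer as a natural transformation of representables, then apply Yoneda) is reasonable, but the step you yourself flag as the crux is justified by a non sequitur. You claim that because each $\m{B}_{\m{C}}$ carries a \emph{unique} $G$-commutative monoid structure (Lemma~\ref{lem:MackeyUniqueness}), the map $\m{B}_{\m{C}'}\to\m{B}_{\m{C}}$ induced by a morphism $\m{C}\to\m{C}'$ is ``forced to be a map of $G$-commutative monoids.'' Uniqueness of a structure on objects never forces maps to preserve that structure. Concretely, take $G=C_2$ and let $\m{A}$ be the Burnside Mackey functor, which by the lemma has a unique $G$-commutative monoid structure. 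The additive map $f\colon\m{A}\to\m{A}$ of underlying coefficient systems that is the identity at level $C_2/e$ and sends $1\mapsto 1$, $[C_2]\mapsto 2$ at level $C_2/C_2$ commutes with all restrictions but not with the transfer, since $f\big(\tr_e^{C_2}(1)\big)=f([C_2])=2$ while $\tr_e^{C_2}\big(f(1)\big)=[C_2]$. So knowing that source and target are Mackey functors, each with its unique $G$-commutative monoid structure, does not make a coefficient-system map commute with transfers; your natural transformation is therefore not yet constructed, and the Yoneda step has nothing to apply to.

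The naturality you need is in fact automatic, but for a different reason, and this is where the lemma actually earns its keep. In the proposition preceding the corollary, the Mackey structure on $\m{B}_{\m{C}}$ is \emph{defined} by post-composition with the structure maps $F(G/H,\m{B})\to\m{B}$ of the $G$-commutative monoid $\m{B}$; these are morphisms of $\STamb_{/\m{R}}$ by the very definition of a $G$-commutative monoid in a $G$-symmetric monoidal category, and post-composition with a fixed morphism commutes with pre-composition, hence is natural in $\m{C}$ with no argument needed. With that definition in hand, your Yoneda step is circular: it merely returns the structure map you started from, and that this map lies in $\STamb_{/\m{R}}$ is definitional. The genuine content of the corollary --- and what Lemma~\ref{lem:MackeyUniqueness} supplies, which is why the paper can call the corollary an immediate consequence of it --- is the \emph{identification} of these structure maps with the ordinary Mackey transfers: since $\m{B}_{\m{C}}$ admits only one $G$-commutative monoid structure, the post-composition transfers must coincide with its ordinary transfers, so the external transfer $F(G/H,\m{B})\to\m{B}$ is the Mackey refinement of the ordinary transfer on $I(\m{B})$ and, as such, is a map of Tambara functors. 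That identification (used later, e.g., for surjectivity of $\mTr_H^G$ at level $G/H$ in the theorem on Strickland's discrepancy) is what the lemma proves; it cannot produce the naturality you ask of it.
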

\begin{proof}
This is an immediate consequence of Lemma~\ref{lem:MackeyUniqueness}.
\end{proof}

This reformulation allows us to be explain the discrepancy seen by Strickland for abelian group objects.

\begin{theorem}
If an $\m{S}$-Tambara functor $\m{B}\to\m{R}$ is a group-like $G$-commutative monoid in $\m{\Tamb}_{/\m{R}}$, then all norms and products in the non-unital Tambara functor $I(\m{B})$ are zero.
\end{theorem}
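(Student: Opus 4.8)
The plan is to prove the two assertions — that products vanish in $I(\m{B})$ and that all norms do — by quite different means. The product statement uses only the abelian group object structure and is checked one level at a time. Since the categorical product in $\STamb_{/\m{R}}$ is the levelwise fibre product over $\m{R}$, the structure map $\mu$ restricts, at each finite $G$-set $T$, to an abelian group object structure on the augmented commutative ring $\m{B}(T)\to\m{R}(T)$, with $\mu_T$ a ring homomorphism on $\m{B}(T)\times_{\m{R}(T)}\m{B}(T)$ and unit the zero section. Given $x,y\in I(\m{B})(T)$, the pairs $(x,0)$ and $(0,y)$ lie in this fibre product, the unit axiom (together with commutativity) gives $\mu_T(x,0)=x$ and $\mu_T(0,y)=y$, and multiplicativity of $\mu_T$ yields
\[
x\cdot y=\mu_T(x,0)\cdot\mu_T(0,y)=\mu_T\big((x,0)\cdot(0,y)\big)=\mu_T(0,0)=0 .
\]
Thus products vanish, using none of the extra Mackey-object data.

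The norms are the real content, and here I would exploit that $\m{B}$ is a genuine Mackey functor object: by Corollary~\ref{cor:TransfersAreMorphisms} every external transfer $\mTr_H^G\colon\CoInd_H^G i_H^\ast\m{B}\to\m{B}$ is a map of Tambara functors, hence commutes with the internal norm maps. It suffices to kill the ``top'' norms $N_H^G\colon\m{B}(G/H)\to\m{B}(G/G)$, the case of a general surjection reducing to these after restricting to the relevant subgroup; the case $H=G$ is vacuous, so assume $[G:H]\ge 2$.

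For such an $H$ I would write down the naturality square for $\mTr_H^G$ against the internal norm along the unique map $G/H\to G/G$. Using the identification $\CoInd_H^G i_H^\ast\m{B}\cong\m{B}_{G/H}=\m{B}(G/H\times-)$, this square becomes
\[
N_H^G\big(\mTr_{G/H}(w)\big)=\tr_H^G\big(N_{\mathrm{pr}}(w)\big),\qquad w\in\m{B}(G/H\times G/H),
\]
where $\mTr_{G/H}$ is the internal transfer and $N_{\mathrm{pr}}$ the internal norm along the projection $\mathrm{pr}\colon G/H\times G/H\to G/H$. I would then feed in $w=T_\iota(x)$, the extension by zero of a given $x\in I(\m{B})(G/H)$ along the inclusion $\iota$ of the diagonal orbit $\Delta\cong G/H$ as a summand of $G/H\times G/H$. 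Since $\mathrm{pr}\circ\iota=\mathrm{id}$ and transfers are covariant, $\mTr_{G/H}(w)=\tr_{\mathrm{pr}}\big(T_\iota(x)\big)=x$; on the other side, because $[G:H]\ge 2$ every fibre of $\mathrm{pr}$ meets the complement of $\Delta$, where $w$ is zero, so the fibrewise product defining $N_{\mathrm{pr}}(w)$ picks up a zero factor and vanishes. The square then reads $N_H^G(x)=\tr_H^G(0)=0$, as wanted.

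The step I expect to be the main obstacle is exactly this last vanishing, $N_{\mathrm{pr}}\big(T_\iota(x)\big)=0$. Making ``a product over the fibres with a zero factor is zero'' rigorous in the equivariant setting means running $\iota$ through the norm-of-a-transfer (exponential diagram) formula and verifying that, because $\Delta$ is a proper summand, the resulting composite of transfers and norms always contains a factor that is a norm or product of an extend-by-zero class, hence $0$. Everything else — the reduction to top norms, the identification of the coinduced structure maps with $\mathrm{pr}$-transfers and $\mathrm{pr}$-norms, and the manipulation of the naturality square — is formal, so the combinatorics of that single exponential diagram is where the care is needed.
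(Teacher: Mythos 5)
Your proposal is correct and is essentially the paper's own argument: both reduce to the top norms $N_H^G$, invoke Corollary~\ref{cor:TransfersAreMorphisms} to make the external transfer $F(G/H,\m{B})\to\m{B}$ a map of Tambara functors, and then kill the norm on the coinduced side — your element $T_\iota(x)$ is exactly the witness for the paper's claim that this transfer is surjective at level $G/H$. The step you flag as the main obstacle requires no exponential-diagram combinatorics: since $\iota$ is the inclusion of the summand $\Delta$ in $G/H\times G/H=\Delta\amalg C$, you have $T_\iota(x)=(x,0)$, and functoriality of $N$ applied to $\mathrm{pr}=\nabla\circ(\mathrm{pr}|_\Delta\amalg\mathrm{pr}|_C)$ gives $N_{\mathrm{pr}}(x,0)=x\cdot N_{\mathrm{pr}|_C}(0)=0$, which is precisely the paper's factorization $N_{\ast\amalg T\to\ast}=\mu\circ\big(Id\times N_{T\to\ast}\big)=0$.
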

\begin{proof}
Since the underlying product is zero in ordinary group-like commutative monoids, the only possible norms that we would have are those of the form
\[
N_H^K=N_{G/H\to G/K}.
\]
This map is determined by the norm in $N_H^K=N_{K/H\to K/K}$ in $i_K^\ast\m{B}$, so it suffices to assume that $K=G$. We therefore have to show that for any $a\in \m{B}(G/H)$, $N_H^G(a)=0$.

Consider the map of Tambara functors
\[
F(G/H,\m{B})\xrightarrow{\mTr_H^G}\m{B}.
\]
By Corollary~\ref{cor:TransfersAreMorphisms}, this is the Mackey refinement of the ordinary transfer on $I(\m{B})$. In particular, at level $G/H$, the map is surjective. However, in $F(G/H,\m{B})$, the map $N_H^G$ is identically zero:
\[
N_H^G=N_{G/H\to\ast}:=N_{i_H^\ast G/H\to\ast}=N_{\ast\amalg T\to\ast}=\mu\circ Id\times N_{T\to\ast}=0,
\]
where $\mu$ is the multiplication, where $T=(i_H^\ast G-H)/H$, and where we have used that the underlying Green functor has trivial products.
\end{proof}

\begin{corollary}
The functors
\[
I\colon \Mackey_{/\m{R}} \rightleftarrows \m{R}\mhyphen\Mod\colon \m{R}\semidirect(-)
\]
are inverse equivalences of categories.
\end{corollary}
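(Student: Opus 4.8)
The plan is to exhibit the two natural isomorphisms witnessing that $I$ and $\m{R}\semidirect(-)$ are mutually inverse. One composite is essentially formal: by the very construction of the square-zero extension, the augmentation $\m{R}\semidirect\mM\to\m{R}$ is the projection with kernel $\mM$, so $I\big(\m{R}\semidirect\mM\big)\cong\mM$ naturally in $\mM$, and this identification manifestly respects the $\m{R}$-module structure. The content is therefore entirely in the other composite: for $\m{B}\in\Mackey_{/\m{R}}$ I must produce a natural isomorphism $\m{R}\semidirect I(\m{B})\xrightarrow{\ \cong\ }\m{B}$ of $\m{S}$-Tambara functors over $\m{R}$.

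First I would extract the splitting. As a group-like $G$-commutative monoid, $\m{B}$ carries a unit for its underlying monoid structure, whose monoidal unit is the terminal object $\m{R}$; this unit is thus a morphism $e\colon\m{R}\to\m{B}$ in $\STamb_{/\m{R}}$, and composing with the augmentation recovers the identity, so $e$ is a Tambara section of $\m{B}\to\m{R}$. Since Mackey functors form an abelian category, this splits the short exact sequence $0\to I(\m{B})\to\m{B}\to\m{R}\to 0$ and yields an isomorphism of underlying Mackey functors $\m{B}\cong\m{R}\oplus I(\m{B})$. Writing $\iota\colon I(\m{B})\to\m{B}$ for the inclusion, the candidate comparison is
\[
\varphi\colon \m{R}\semidirect I(\m{B})\to\m{B},\qquad (r,m)\mapsto e(r)+\iota(m),
\]
which is a level-wise bijection and a map of Mackey functors because $e$ and $\iota$ are. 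Morphisms in $\Mackey_{/\m{R}}$ preserve the monoid unit, so $e$, hence $\varphi$, is natural in $\m{B}$; and since $e$ is a map of $\m{S}$-Tambara functors we have $\operatorname{str}_{\m{B}}=e\circ\operatorname{str}_{\m{R}}$, so $\varphi$ intertwines the two $\m{S}$-structures.

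It remains to check that $\varphi$ is multiplicative and commutes with norms, and this is the crux. For products, the square-zero multiplication $(r_1,m_1)(r_2,m_2)=(r_1r_2,\,r_1 m_2+r_2 m_1)$ matches the product in $\m{B}$ precisely because $e$ is a ring map, $e(r)\iota(m)=\iota(r\cdot m)$ is the definition of the $\m{R}$-action on $I(\m{B})$, and $\iota(m_1)\iota(m_2)=0$ by the vanishing of products in $I(\m{B})$ from the preceding theorem. The norms are the genuinely equivariant point. Here I would invoke the universal norm-of-a-sum formula of Proposition~\ref{prop:NormofSum}: applied to $e(r)+\iota(m)$ it expresses $N_H^G\big(e(r)+\iota(m)\big)$ as $T_h\circ N_g\circ R_f$ of the pair, an expression built solely from restrictions, norms, and transfers, the ring and module structures, and the identities $N\big(e(r)\big)=e\big(N(r)\big)$ together with the vanishing of all norms and products of ideal elements; the general $N_H^K$ then follows by restricting to $K$. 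Every one of these ingredients holds identically in $\m{B}$ and in $\m{R}\semidirect I(\m{B})$ — the norm vanishing on the ideal coming from the preceding theorem on one side and from the construction on the other — so the two sides are computed by one and the same recipe and therefore agree. I expect this norm compatibility to be the main obstacle, since it is exactly where Strickland's abelian group objects fail: for a mere abelian group object only the products in the ideal are forced to vanish, whereas the norm-of-a-sum formula additionally requires $N\big(\iota(m)\big)=0$, and it is precisely the Mackey functor object hypothesis — via the preceding theorem — that supplies this vanishing. Granting it, $\varphi$ is an isomorphism of $\m{S}$-Tambara functors over $\m{R}$, natural in $\m{B}$, completing the proof that $I$ and $\m{R}\semidirect(-)$ are inverse equivalences.
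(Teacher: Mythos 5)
Your overall strategy is exactly the one the paper intends: the corollary appears there with no separate proof, as an immediate consequence of the preceding theorem, and your write-up correctly fleshes out both composites --- the formal identification $I(\m{R}\semidirect\mM)\cong\mM$, and the comparison $\varphi\colon\m{R}\semidirect I(\m{B})\to\m{B}$ built from the unit section $e$, with multiplicativity and norm-compatibility reduced, via the norm-of-a-sum decomposition, to the vanishing of norms and products on $I(\m{B})$ supplied by that theorem.

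There is, however, one step you never address, and it is not vacuous in this setting: you verify that $\varphi$ is an isomorphism in $\STamb_{/\m{R}}$, but the corollary asserts an equivalence with $\Mackey_{/\m{R}}$, whose morphisms must in addition commute with the $G$-commutative monoid structure, and $\Mackey_{/\m{R}}$ is \emph{not} a full subcategory of $\STamb_{/\m{R}}$. This non-fullness is precisely the phenomenon the next section of the paper is devoted to: a map of Tambara functors over $\m{R}$ into a square-zero extension can carry a nontrivial derivation component, and such a map does not respect the Mackey functor object structure. So you still owe the check that $\varphi$ intertwines the units, the additions, and the external transfer maps. The fix is available from the paper's own toolkit: $\varphi$ preserves units by construction, since $\varphi(r,0)=e(r)$; once the unit is fixed, the addition on a Tambara functor over $\m{R}$ with square-zero ideal is forced level-wise by the classical unit-axiom argument; and the external transfers are unique by Lemma~\ref{lem:MackeyUniqueness} (applied through the represented Mackey functors, cf.\ Corollary~\ref{cor:TransfersAreMorphisms}), so transporting the structure of $\m{R}\semidirect I(\m{B})$ along $\varphi$ must reproduce the given structure on $\m{B}$. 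With that observation added, your argument is complete and coincides with the proof the paper leaves implicit.
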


\section{Genuine Derivations}

\begin{definition}\label{def:GenuineDerivation}
Let $\m{S}$ and $\m{R}$ be Tambara functors, $\eta\colon\m{S}\to\m{R}$ a map of Tambara functors, and let $\mM$ be an $\m{R}$-module. We say that a map
\[
d\colon\m{R}\to\mM
\]
is a {\defemph{genuine $\m{S}$-derivation}} if 
\begin{enumerate}
\item for all finite $G$-sets $T$ and all $r_1, r_2\in \m{R}(T)$, we have 
\[
d(r_1\cdot r_2)=r_1\cdot d(r_2)+d(r_1)\cdot r_2\in\m{M}(T),
\]
\item for all $a\in\m{R}(G/H)$, 
\[
d(N_H^Ka)=tr_H^K N_{d_2}R_{d_1}(a)\cdot d(a),
\]
where $d_i$ is the restriction of the projection onto the $i$th factor of the complement of the diagonal in $K/H\times K/H$, and
\item $d\circ \eta=0$.
\end{enumerate}

Let $\Der_{\m{S}}^{1,G}(\m{R},\mM)$ be the set of all genuine $\m{S}$-derivations from $\m{R}$ to $\mM$.
\end{definition}

The intuition here is that just as an ordinary derivation turns ordinary multiplications into sums, a genuine derivation turns twisted multiplications (norms) into twisted sums (transfers).  

The following is immediate from the definitions.
\begin{proposition}\label{prop:Naturality}
Let $d\colon\m{R}\to\mM$ be a genuine derivation. 
\begin{enumerate}
\item If $\iota\colon \m{S}\to\m{R}$ is a map of Tambara functors, then $d\circ \iota$ is a genuine derivation, where $\mM$ is viewed as an $\m{S}$-module via $\iota$.
\item If $f\colon \mM\to\mM'$ is a map of $\m{R}$-modules, then $f\circ d$ is a genuine derivation.
\end{enumerate}
\end{proposition}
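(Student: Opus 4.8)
The statement is pure naturality, so my plan is simply to verify that $d\circ\iota$ and $f\circ d$ each satisfy the three conditions of Definition~\ref{def:GenuineDerivation}, exploiting in each case that the map we compose with respects exactly the structure appearing in those conditions. For part (1) I would use throughout that $\iota$ is a map of Tambara functors, so that it commutes with restrictions $R_f$, norms $N_f$, transfers $T_f$, and ordinary products, together with the fact that the $\m{S}$-module structure on $\mM$ is by definition the $\m{R}$-action precomposed with $\iota$. Condition (1) is then the one-line computation $(d\circ\iota)(s_1 s_2)=d\big(\iota(s_1)\iota(s_2)\big)=\iota(s_1)\cdot d(\iota(s_2))+d(\iota(s_1))\cdot\iota(s_2)$ for $s_1,s_2\in\m{S}(T)$, which is the Leibniz rule for $d\circ\iota$ once the products $\iota(s_i)\cdot(-)$ are reinterpreted through the pulled-back $\m{S}$-action. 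For condition (2), since $\iota$ commutes with $N_H^K$ I have $(d\circ\iota)(N_H^K a)=d(N_H^K\iota(a))$, and applying condition (2) for $d$ yields $\tr_H^K N_{d_2}R_{d_1}(\iota a)\cdot d(\iota a)$; because $\iota$ also commutes with $N_{d_2}$ and $R_{d_1}$, the coefficient $N_{d_2}R_{d_1}(\iota a)$ equals $\iota\big(N_{d_2}R_{d_1}(a)\big)$ and acts on $d(\iota a)$ precisely through the pulled-back module structure, giving the required formula. Condition (3) is the compatibility of augmentations.

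For part (2) I would instead use that $f$ is a map of $\m{R}$-modules, hence additive, $\m{R}$-linear, and, crucially, a morphism of the underlying Mackey functors, so that it commutes with the external transfer $\tr_H^K$. Condition (1) follows since $f$ is additive and $\m{R}$-linear; condition (3) is immediate from $f(0)=0$ together with condition (3) for $d$. The only point that actually invokes the Mackey structure of $f$ is condition (2): applying $f$ to the identity $d(N_H^K a)=\tr_H^K N_{d_2}R_{d_1}(a)\cdot d(a)$ and commuting $f$ past both the transfer $\tr_H^K$ and the $\m{R}$-linear multiplication by $N_{d_2}R_{d_1}(a)$ produces $\tr_H^K N_{d_2}R_{d_1}(a)\cdot (f\circ d)(a)$, as needed.

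I expect the norm-to-transfer condition (2) to be the only real obstacle in either part, since it is the genuinely equivariant axiom and the only one that simultaneously involves norms, restrictions, and transfers. In part (1) the crux is that $\iota$ is a full Tambara map, not merely a Green- or Mackey-functor map, so that it commutes with the norm $N_{d_2}$ and the restriction $R_{d_1}$ appearing in the coefficient; in part (2) the crux is that $f$, being a morphism of $\m{R}$-modules, is a morphism of Mackey functors and therefore commutes with $\tr_H^K$ while remaining $\m{R}$-linear. Everything else is formal, which is why the statement can be asserted as immediate from the definitions.
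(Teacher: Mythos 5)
Your verification is correct, and it is exactly the argument the paper has in mind: the paper states this proposition with no proof at all ("immediate from the definitions"), and your direct check of the three axioms of Definition~\ref{def:GenuineDerivation} — using that $\iota$ commutes with $R_f$, $N_f$, $T_f$ and products in part (1), and that an $\m{R}$-module map is an $\m{R}$-linear Mackey functor map, hence commutes with $\tr_H^K$, in part (2) — is precisely the unpacking of that claim. Nothing is missing; you have simply written out what the paper leaves implicit.
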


\begin{proposition}
If $\m{R}$ is an $\m{S}$-Tambara functor, $\m{M}$ is an $\m{R}$-module, and $d\colon\m{R}\to\mM$ is a genuine $\m{S}$-derivation, then $\ker(d)$ is a sub-Tambara functor of $\m{R}$.
\end{proposition}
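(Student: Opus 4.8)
The plan is to show that the levelwise kernels
\[
\ker(d)(T)=\ker\big(d_T\colon\m{R}(T)\to\mM(T)\big)
\]
assemble into a sub-Tambara functor by checking closure under each piece of the Tambara structure in turn. Since $d$ is in particular a map of the underlying Mackey functors, it is additive and commutes with restrictions and transfers; consequently $\ker(d)$ is automatically a sub-Mackey functor. Indeed, each $\ker(d_T)$ is a subgroup of $\m{R}(T)$, and if $d_T(x)=0$ then $d(R_f x)=R_f(d_T x)=0$ and $d(T_f x)=T_f(d_T x)=0$. It therefore remains only to check closure under the multiplicative structure: products, the unit, and the norms.

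For products I would use the Leibniz rule of Definition~\ref{def:GenuineDerivation}(1): if $x,y\in\ker(d)(T)$, then
\[
d(x\cdot y)=x\cdot d(y)+d(x)\cdot y=0,
\]
so $x\cdot y\in\ker(d)(T)$. The multiplicative unit lies in the kernel as well, since $d(1)=d(1\cdot 1)=2\,d(1)$ forces $d(1)=0$ (equivalently, $1=\eta(1)$ and condition (3) gives $d\circ\eta=0$). Thus $\ker(d)$ is a sub-Green functor of $\m{R}$.

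The step requiring the most care --- and the heart of the argument --- is closure under norms, where I would invoke Definition~\ref{def:GenuineDerivation}(2). For $a\in\ker(d)(G/H)$ we have
\[
d(N_H^K a)=\tr_H^K\big(N_{d_2}R_{d_1}(a)\cdot d(a)\big).
\]
The correction factor $N_{d_2}R_{d_1}(a)$ depends on $a$ alone, while $d(a)$ enters the right-hand side as a single factor, multiplied in via the $\m{R}$-module structure on $\mM$ and then transferred. Because both the module action and the transfer are additive, the hypothesis $d(a)=0$ makes the entire right-hand side vanish, so $N_H^K a\in\ker(d)(G/K)$. By the presentation of a Tambara functor as a commutative Green functor equipped with the norm maps $N_H^K$, a sub-Green functor that is closed under these norms is a sub-Tambara functor, so the verifications above complete the argument.

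The only point that genuinely demands attention is this last one: one must read off from the norm formula that its right-hand side is linear in $d(a)$, so that it vanishes on the kernel, and recall that the $N_H^K$ together with the Green structure generate all of the twisted products. Neither presents a real obstacle, so I expect the argument to be short.
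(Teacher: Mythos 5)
Your proof is correct and follows essentially the same route as the paper: the paper likewise observes that $\ker(d)$ is a sub-Green functor because $d$ is an ordinary derivation, and then applies the genuine-derivation formula $d\big(N_H^Ka\big)=\tr_H^K\big(N_{d_2}R_{d_1}(a)\cdot d(a)\big)=0$ for $a\in\ker(d)(G/H)$ to conclude closure under all norms. Your write-up merely expands the sub-Green-functor step (Mackey structure, Leibniz rule, unit) that the paper dispatches in one sentence.
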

\begin{proof}
Since $d$ is an ordinary derivation, $\ker(d)$ is a sub-Green functor of $\m{R}$. If $a\in\ker(d)(G/H)$, then since $d$ is a genuine $\m{S}$-derivation, 
\[
d\big(N_H^Ka\big)=\tr_H^K \big(N_{d_2} R_{d_1}(a)\cdot d(a)\big)=0,
\]
showing that for all $H\subset K\subset G$, $N_H^K(a)$ is again in the kernel. Thus the kernel is also closed under all norm maps, making it a sub-Tambara functor.
\end{proof}
\begin{remark}
Without the ``genuine'' part for a genuine derivation, we could only conclude that the kernel of a derivation was a sub-Green functor.
\end{remark}

We connect now derivations and square zero extensions, showing that the usual results apply with this definition. For this, we need a refinement of Proposition~\ref{prop:NormofSum} describing the norm of a sum, building an increasingly refined series of equations writing norm of a sum as a sum of transfers of norms. 

\begin{definition}
There is a natural grading on $F(G/H,\{0,1\})$ given by 
\[
\deg(f):=\sum_{gH\in G/H} f(gH).
\]
For each $0\leq k\leq [G:H]$, let
\[
T_k:=\{f\in F(G/H,\{0,1\}) | \deg(f)=k\}.
\]
\end{definition}

\begin{proposition}
For each $0\leq k\leq [G:H]$, the subsets $T_k\subset F(G/H,\{0,1\})$ are equivariant subsets, inducing a coproduct decomposition
\[
T_0\amalg\dots\amalg T_{[G:H]}\cong F(G/H,\{0,1\}).
\]
Moreover, the map $G/H\times F(G/H,\{0,1\})\to F(G/H,\{0,1\})$ respects this decomposition in the sense that $G/H\times T_i$ maps to $T_i$.
\end{proposition}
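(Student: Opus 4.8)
The plan is to derive both assertions directly from the defining formula $\deg(f)=\sum_{gH\in G/H}f(gH)$, the key point being that the degree is a $G$-invariant of $f$. First I would pin down the $G$-action on $F(G/H,\{0,1\})$: since $\{0,1\}$ carries the trivial action, this is $(\gamma\cdot f)(xH)=f(\gamma^{-1}xH)$. I would then compute
\[
\deg(\gamma\cdot f)=\sum_{xH\in G/H}f(\gamma^{-1}xH)=\sum_{yH\in G/H}f(yH)=\deg(f),
\]
the middle equality being the reindexing $yH=\gamma^{-1}xH$, which is legitimate because left translation by $\gamma^{-1}$ is a bijection of $G/H$. Hence $\deg$ is constant on $G$-orbits, so each level set $T_k=\deg^{-1}(k)$ is a $G$-subset. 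Since every $f$ has exactly one degree, lying in $\{0,\dots,[G:H]\}$, the sets $T_0,\dots,T_{[G:H]}$ partition $F(G/H,\{0,1\})$, which is precisely the asserted coproduct decomposition.

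For the second assertion I would first identify the map in question. In the exponential diagram of Proposition~\ref{prop:NormofSum}, the map $G/H\times F(G/H,\{0,1\})\to F(G/H,\{0,1\})$ is the structure map $g$, that is, the projection (over $\ast$) onto the second factor $\prod_\pi\nabla\cong F(G/H,\{0,1\})$. Thus $g(xH,f)=f$, and if $f\in T_i$ then $g(xH,f)=f\in T_i$, so $g$ carries $G/H\times T_i$ into $T_i$, as claimed.

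The only substantive computation is the $G$-invariance of the degree, and that in turn reduces to the observation that the $G$-action merely permutes the index set $G/H$ of the defining sum, so I foresee no real obstacle. The one thing to be careful about is the bookkeeping identification of the relevant map as the second-factor projection $g$ rather than the evaluation map $\epsilon$; once that is settled the compatibility with the decomposition is formal.
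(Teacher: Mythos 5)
Your proof is correct and follows essentially the same route as the paper's: the degree is $G$-invariant because the action merely permutes the index set of the defining sum, the level sets of the degree then partition $F(G/H,\{0,1\})$ into equivariant pieces, and the second claim holds because the map $g$ in the exponential diagram is just the projection onto the second factor. Your version simply writes out the reindexing computation and the identification of $g$ more explicitly than the paper does.
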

\begin{proof}
Since the degree is defined by summing together all values of $f$ and the $G$-action is given by pre-composition, we have $\deg(f)=\deg(g\cdot f)$ for all $f\in F(G/H,\{0,1\})$ and $g\in G$. In particular, these are equivariant subsets. The decomposition in question then follows from the observation that these are disjoint and that the degree of any function is between $0$ and $[G:H]$. The second part is obvious from the fact that the map in question is just the projection onto the second factor.
\end{proof}

In light of this, we have the following formula which is true for any Tambara functor.

\begin{proposition}\label{prop:NormofSumII}
Let $\m{R}$ be a Tambara functor and let $a, b\in\m{R}(G/H)$. For each $0\leq k\leq [G:H]$, let $f_k\colon T_k\to\ast$ and $g_k\colon G/H\times T_k\to T_k$ be the projections, let $h_k\colon G/H\times T_k\to G/H\amalg G/H$ be the restriction of $\epsilon$ to $T_k$. Then
\[
N_H^G(a+b)=\sum_{k=0}^{[G:H]} T_{f_k} N_{g_k} R_{h_k}(a,b).
\]
\end{proposition}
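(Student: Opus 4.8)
The plan is to take the single-term formula of Proposition~\ref{prop:NormofSum},
\[
N_H^G(a+b)=T_h N_g R_\epsilon(a,b),
\]
and feed the coproduct decomposition $F(G/H,\{0,1\})\cong T_0\amalg\cdots\amalg T_{[G:H]}$ through each of the three structure maps in turn. The entire content is that $\m{R}$ is product preserving, so this coproduct becomes the product $\m{R}\big(F(G/H,\{0,1\})\big)\cong\prod_k\m{R}(T_k)$, and that $R_\epsilon$, $N_g$, and $T_h$ are each compatible with this splitting in a manner dictated by their variance. The compatibility needed is exactly what the preceding proposition records: $g$ carries $G/H\times T_k$ into $T_k$, and $\epsilon$ and $h$ respect the decomposition as well.

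Writing $\iota_k\colon T_k\hookrightarrow F(G/H,\{0,1\})$ for the component inclusions, I would first use contravariance of $R$: the $k$-th component of $R_\epsilon(a,b)$ is $R_{\id\times\iota_k}R_\epsilon(a,b)=R_{\epsilon\circ(\id\times\iota_k)}(a,b)=R_{h_k}(a,b)$, since $h_k$ is by definition the restriction of $\epsilon$ to $G/H\times T_k$. For the norm, the key observation is that the square with corners $G/H\times T_k$, $G/H\times F(G/H,\{0,1\})$, $T_k$, $F(G/H,\{0,1\})$ is a pullback, precisely because $g$ sends $G/H\times T_k$ into $T_k$, so that $G/H\times T_k$ is the fiber of $g$ over the component $T_k$. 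The base-change relation $R_g\circ N_f=N_{f'}\circ R_{g'}$ then gives $R_{\iota_k}\circ N_g=N_{g_k}\circ R_{\id\times\iota_k}$, so $N_g$ acts componentwise, with $k$-th component $N_{g_k}R_{h_k}(a,b)$. Finally, the Mackey identity $\sum_k T_{\iota_k}R_{\iota_k}=\id$ on $\m{R}\big(F(G/H,\{0,1\})\big)$, together with covariance of $T$ in the form $T_h T_{\iota_k}=T_{h\circ\iota_k}=T_{f_k}$, lets me write $T_h=\sum_k T_{f_k}R_{\iota_k}$, which sums the components after applying each $T_{f_k}$.

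Assembling the three steps yields
\[
N_H^G(a+b)=T_h N_g R_\epsilon(a,b)=\sum_{k=0}^{[G:H]}T_{f_k}N_{g_k}R_{h_k}(a,b),
\]
as claimed. The one step with genuine content is the middle one: a norm is a multiplicative transfer and has no reason to distribute over a decomposition in general. The reason it does here is exactly the compatibility $g(G/H\times T_k)\subseteq T_k$ from the preceding proposition, which turns the relevant square into a pullback and lets base change apply; everything else is bookkeeping with the functoriality of $R$ and $T$ and the product-preservation of $\m{R}$.
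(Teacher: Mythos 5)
Your proof is correct and is essentially the argument the paper intends: the paper states this proposition without proof, offering only ``in light of this'' after the decomposition proposition, and your writeup supplies exactly that implicit argument---split the exponential-diagram formula $N_H^G(a+b)=T_hN_gR_\epsilon(a,b)$ of Proposition~\ref{prop:NormofSum} along the grading decomposition, using base change for $N_g$ and additivity of transfers. Your identification of the pullback square as the one point of genuine content (why a multiplicative transfer distributes over the decomposition) is exactly right.
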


Proposition~\ref{prop:NormofSumII} allows us to restrict attention to each homogeneous piece. To get our desired result, we need a more explicit formula for $N_{g_k}\circ R_{h_k}$.

%\todo[inline, color=green]{Here is the result that I want to show}
\begin{proposition}\label{prop:CoveringSpaces}
%For each orbit $G\cdot f\subset T_k$, let 
%\[
%T_f:=\epsilon^{-1}(G/H\times\{1\})\cap G/H\times G\cdot f.
%\]
%Then $T_f\to G\cdot f$ is a $k$-to-$1$-map.
Let $T'_k\subset G/H\times T_k$ be $\epsilon^{-1}(G/H\times\{1\})\cap (G/H\times T_k)$. Then $T'_k\to T_k$ is a $k$-fold covering map.
\end{proposition}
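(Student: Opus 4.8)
The plan is to make the subset $T'_k$ completely explicit and then read the fiber cardinalities off directly. First I would unwind the definition of $\epsilon$. Under the identification $G/H\amalg G/H\cong G/H\times\{0,1\}$, the subset $G/H\times\{1\}$ is exactly the locus where the second coordinate equals $1$, and since $\epsilon(gH,f)=(gH,f(gH))$, a pair $(gH,f)$ lands in $G/H\times\{1\}$ precisely when $f(gH)=1$. Hence
\[
T'_k=\{(gH,f)\in G/H\times T_k \mid f(gH)=1\},
\]
and the map to $T_k$ in question is the restriction of the projection $g_k$, namely $(gH,f)\mapsto f$.

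Next I would verify that $T'_k$ is genuinely a sub-$G$-set of $G/H\times T_k$, so that the assertion lives in $\Set^G$ at all. With the action $\gamma\cdot(gH,f)=(\gamma gH,\gamma\cdot f)$, where $(\gamma\cdot f)(xH)=f(\gamma^{-1}xH)$, one computes $(\gamma\cdot f)(\gamma gH)=f(gH)$, so the defining condition $f(gH)=1$ is $G$-invariant; thus $T'_k$ is closed under the action and the projection to $T_k$ is equivariant. The core computation is then immediate: the fiber of $T'_k\to T_k$ over a point $f\in T_k$ is
\[
\{gH\in G/H \mid f(gH)=1\}=f^{-1}(1),
\]
and since $f$ takes values in $\{0,1\}$ with $\deg(f)=\sum_{gH}f(gH)=k$, this fiber has exactly $k$ elements.

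Finally I would note that because $T_k$, $G/H$, and hence $G/H\times T_k$ are finite discrete $G$-sets, there is no local-triviality condition to check beyond constancy of the fiber cardinality: a morphism of finite $G$-sets all of whose fibers have cardinality $k$ is by definition a $k$-fold covering map, so the previous display finishes the argument. There is essentially no obstacle here; the only points requiring care are the bookkeeping of which copy of $G/H$ in $G/H\amalg G/H$ corresponds to the value $1$ (so that the fiber counts ones rather than zeros) and the equivariance check making $T'_k$ a legitimate object of $\Set^G$. I would emphasize that the degree decomposition of the preceding proposition is precisely what forces the fiber cardinality to be constant on each piece $T_k$, which is exactly why the covering is $k$-fold rather than merely having fibers of varying size.
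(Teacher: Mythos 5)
Your proof is correct and follows the same route as the paper's: identify $T'_k$ explicitly as $\{(gH,f)\mid f(gH)=1\}$ and observe that the fiber over $f\in T_k$ is $f^{-1}(1)$, which has cardinality $k$ by the definition of $T_k$. The extra details you supply (unwinding $\epsilon$, the equivariance check, the remark that constant fiber cardinality suffices for finite $G$-sets) are fine but are left implicit in the paper's two-line argument.
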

\begin{proof}
The $G$-set $T'_k$ is 
\[
T'_k=\{(gH,f) | f(gH)=1\}\subset G/H\times T_k,
\]
so by construction, the fiber over a map $f\in T_k$ has cardinality exactly $k$.
\end{proof}

\begin{theorem}
Let $\m{R}$ be a Tambara functor, let $\mM$ be an $\m{R}$-module, and let $\m{C}\xrightarrow{\epsilon}\m{R}$ be an $\m{S}$-Tambara functor augmented to $\m{R}$. Let $d\colon\m{C}\to\mM$ be a map of Mackey functors. Then
\[
s=\epsilon\semidirect d\colon\m{C}\to\m{R}\semidirect\mM
\]
is a map of $\m{S}$-Tambara functors if and only if $d$ is a genuine $\m{S}$-derivation.
\end{theorem}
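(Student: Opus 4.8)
The plan is to reduce the statement to a level-wise comparison and then check the multiplicative structure piece by piece. Since $\epsilon$ and $d$ are maps of Mackey functors, so is $s=\epsilon\semidirect d$, and hence $s$ automatically commutes with all restrictions and transfers. Being a map of $\m{S}$-Tambara functors therefore amounts to three further conditions: that $s$ is a ring homomorphism at each level, that $s$ commutes with all norm maps $N_H^K$ (which, by Mazur's presentation, together with the Green functor structure suffice), and that $s$ respects the augmentations from $\m{S}$. I would dispatch the first and third immediately. Writing $s(c)=(\epsilon(c),d(c))$ and expanding the product in $\m{R}\semidirect\mM$ (where $\mM\cdot\mM=0$ and $\mM$ is a $\m{C}$-module via $\epsilon$), the identity $s(c_1 c_2)=s(c_1)s(c_2)$ holds if and only if $d(c_1 c_2)=c_1 d(c_2)+d(c_1)c_2$, which is precisely condition (1) of Definition~\ref{def:GenuineDerivation}. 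Similarly, if $\sigma\colon\m{S}\to\m{C}$ is the structure map, then $\epsilon\circ\sigma$ is the given $\eta\colon\m{S}\to\m{R}$, so $s\circ\sigma=\eta\semidirect(d\circ\sigma)$ agrees with the canonical map $\m{S}\to\m{R}\semidirect\mM$ exactly when $d\circ\sigma=0$, which is condition (3).

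The heart of the proof is the norm condition, and this is where I expect the real work to lie. It suffices to treat $N_H^G$, since $N_H^K=N_{G/H\to G/K}$ is computed from $N_{K/H\to K/K}$ after applying $i_K^\ast$, and restriction carries the whole setup to the analogous one for the group $K$. Fix $c\in\m{C}(G/H)$ and compare $s(N_H^G c)$ with $N_H^G(s(c))$. Since $\epsilon$ is a map of Tambara functors, the $\m{R}$-components already agree: both are $N_H^G(\epsilon(c))$. Everything therefore comes down to computing the $\mM$-component of $N_H^G(s(c))$ and matching it with $d(N_H^G c)$.

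To compute $N_H^G(s(c))$ I would write $s(c)=(\epsilon(c),0)+(0,d(c))$ and apply the norm-of-a-sum formula of Proposition~\ref{prop:NormofSumII}, so that
\[
N_H^G(s(c))=\sum_{k=0}^{[G:H]} T_{f_k} N_{g_k} R_{h_k}\big((\epsilon(c),0),(0,d(c))\big).
\]
The decisive simplification is that the square-zero relation $\mM\cdot\mM=0$ annihilates every summand with $k\geq 2$: by Proposition~\ref{prop:CoveringSpaces} the term indexed by $k$ involves a norm over the $k$-fold cover $T'_k\to T_k$ of an element supported in $\mM$, and for $k\geq 2$ this is a fiberwise product of at least two elements of $\mM$, hence zero. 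The $k=0$ term contributes only the $\m{R}$-part already accounted for. The key computation is the surviving $k=1$ term: here $T_1\cong G/H$, the cover $T'_1\to T_1$ is an isomorphism, and its complement $T''_1\subset G/H\times T_1$ is exactly the complement of the diagonal in $G/H\times G/H$, with the two projections restricting to the maps $d_1,d_2$ of Definition~\ref{def:GenuineDerivation}. Expanding $N_{g_1}R_{h_1}$ over the factorization $G/H\times T_1=T''_1\amalg T'_1$ and using that $\epsilon$ commutes with norms and restrictions, the $k=1$ term becomes $\big(0,\ \tr_H^G\big(N_{d_2}R_{d_1}(\epsilon(c))\cdot d(c)\big)\big)$. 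Thus the $\mM$-component of $N_H^G(s(c))$ equals $\tr_H^G\big(N_{d_2}R_{d_1}(c)\cdot d(c)\big)$, and $s$ commutes with $N_H^G$ if and only if this equals $d(N_H^G c)$, which is condition (2). The main obstacle is precisely this bookkeeping: correctly identifying $T''_1$ with the off-diagonal of $G/H\times G/H$ and tracking the factorization of the norm over the covering decomposition, so that the square-zero truncation produces exactly the single transfer-of-norm term appearing in the definition.
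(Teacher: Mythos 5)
Your proposal is correct and takes essentially the same route as the paper's proof: dispatch the Green functor and augmentation conditions by the classical square-zero argument, reduce the norm condition to $N_H^G$ by restricting to $K$, expand the norm of $s(c)$ via Proposition~\ref{prop:NormofSumII}, use Proposition~\ref{prop:CoveringSpaces} together with the square-zero relation to annihilate all terms with $k\geq 2$, and identify the surviving $k=1$ term as $\tr_H^G\big(N_{d_2}R_{d_1}(c)\cdot d(c)\big)$. The only differences are notational (you keep $\epsilon$ explicit where the paper suppresses it), so there is nothing further to add.
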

\begin{proof}
For notational ease, we suppress explicit mention of $\epsilon$: $\m{R}$ and $\mM$ become $\m{C}$-modules via $\epsilon$ and we use the ordindary notation for such.

Since $d$ is a map of Mackey functors and since Mackey functors form an additive category, $s$ is necessarily a map of Mackey functors. Since the underlying Green functor multiplication is square-zero, the classical argument shows that $s$ is map of Green functors if and only if $d$ is a derivation. We therefore need only show that for all $H\subset K$ and $a\in\m{C}(G/H)$,
\begin{equation}\label{eqn:SufficientNorm}
N_H^K\big(a+d(a)\big)=N_H^K(a)+d\big(N_H^K(a)\big)
\end{equation}
if and only if $d$ is a genuine derivation. By replacing $\m{C}$ with $i_K^\ast\m{C}$, we see that it suffices to verify this for $K=G$.

By Proposition~\ref{prop:NormofSumII}, the left-hand side is
\[
N_H^G\big(a+d(a)\big)=\sum_{k=0}^{[G:H]} T_{f_k} N_{g_k} R_{h_k}\big(a,d(a)\big)=N_H^G(a)+\sum_{k=1}^{[G:H]} T_{f_k}N_{g_k} R_{h_k}\big(a,d(a)\big),
\]
where here $\big(a,d(a)\big)\in\m{R}(G/H)\times\mM(G/H)$. In particular, we conclude that Equation~\ref{eqn:SufficientNorm} holds if and only if
\[
d\big(N_H^G(a)\big)=\sum_{k=1}^{[G:H]} T_{f_k}N_{g_k} R_{h_k}\big(a,d(a)\big).
\]
By Proposition~\ref{prop:CoveringSpaces}, for all $k>1$, on each summand of $T_k$ the map $g_k$ is $k$-to-$1$. In particular, it is a surjective map which is not an isomorphism. The corresponding norm is then necessarily zero on the $\mM$ summand, and hence the product of all of these with terms coming from $\m{R}$ is still zero.  Thus Equation~\ref{eqn:SufficientNorm} holds if and only if
\[
d\big(N_H^G(a)\big)=T_{f_1} N_{g_1} R_{h_1}\big(a,d(a)\big).
\]
The functions $f_1$, $g_1$, and $h_1$ are also easy to understand, since $T_1\cong G/H$, generated by the function which sends $eH$ to $1$ and all other cosets to $0$. The map
\[
h_1\colon G/H\times T_1\to G/H\times\{0,1\}
\]
is then isomorphic to
\[
(G/H\times G/H-\Delta)\amalg G/H\cong G\times_H((i_H^\ast G-H)/H\amalg G/H\to G/H\times\{0,1\}.
\]
This gives us 
\[
R_{h_1}\big(a,d(a)\big)=\big(R_{d_1}(a),d(a)\big).
\]
The map $g_1$ is just the projection onto the second factor $G/H\times G/H\to G/H$. With respect to the decomposition used above, this just becomes
\[
(G/H\times G/H-\Delta)\amalg G/H\to G/H,
\]
where on the first summand, we use the projection onto the second factor and where on the second summand we use the identity. Thus
\[
N_{g_1} R_{h_1}\big(a,d(a)\big)=N_{d_2}R_{d_1}(a)\cdot d(a).
\]
Since $f_1$ is the quotient map $G/H\to\ast$, the associated transfer is just $tr_H^G$. Putting this together shows that Equation~\ref{eqn:SufficientNorm} holds if and only if
\[
d\big(N_H^G(a)\big)=tr_H^G\big(N_{d_2}R_{d_1}(a)\cdot d(a)\big),
\]
which is the definition of $d$ being a genuine derivation.

Since the map $\eta\colon\m{S}\to\m{R}\semidirect\mM$ giving the $\m{S}$-Tambara functor structure factors as the composite 
\[
\m{S}\to\m{R}\xrightarrow{Id\semidirect 0}\m{R}\semidirect\mM,
\] 
we see that $d\circ\eta=0$, automatically.
\end{proof}
%
%\begin{theorem}
%If $s\colon\m{R}\to \m{R}\semidirect\mM$ is a section, then $d=\pi\circ s\colon \m{R}\to \mM$ is a genuine derivation.
%\end{theorem}
%\begin{proof}
%It suffices to show that $\pi\colon\m{R}\semidirect\mM\to\mM$ is a genuine derivation, where $\mM$ is an $\m{R}\semidirect\mM$-module via $\pi$, since the section is a map of Tambara functors by assumption. 
%\end{proof}

\section{K\"ahler Differentials}

%One of the nice consequences of the external formulation of Tambara functors as the $G$-commutative monoids in the category of Mackey functors \cite{MazurArxiv}, \cite{HoyerThesis} is that the category of modules over a Tambara functor has a natural $G$-symmetric monoidal structure.
%
%\begin{proposition}[{\cite[]{HHLocalization}}]
%If $\m{R}$ is a Tambara functor, then the category $\Mods{\m{R}}$ has internal norm maps given by
%\[
%{}_{\m{R}} N_H^G(\mM):= \m{R}\Boxover{N_H^Gi_H^\ast\m{R}} N_H^G\mM,
%\]
%where the map $N_H^Gi_H^\ast \m{R}\to\m{R}$ is simply the counit of the norm-forget adjunction.
%\end{proposition}
%
%In particular, it makes sense to talk about Tambara functors and non-unital Tambara functors in the category of $\m{R}$-modules, as these are just the $G$-commutative monoids and non-unital $G$-commutative monoids in this category. 

One of the tricky parts of generalizing the notion of K\"ahler differentials is finding the right way to work with ideals in the context of Tambara functors. Work of Nakaoka describes the right version of Tambara ideals, and we build on that here \cite{NakaokaIdeals}. In the language of Definition~\ref{def:NonUnitalRAlgebra}, if $\m{R}$ is a Tambara functor, then a Tambara ideal is simply a sub-non-unital $\m{R}$-Tambara functor.

\begin{definition}
Let $\m{R}$ be a Tambara functor and let $\m{I}$ be a non-unital $\m{R}$-Tambara functor. Let
\[
\m{I}^{>1}:=\sum_{H\subset G, T\in\Set^H, |T|>1} \Ind_H^G N^T i_H^\ast \m{I}\subset \m{I},
\]
where here $\Ind_H^GN^T i_H^\ast\m{I}$ stands for the image of the corresponding structure map.

We call this the \defemph{submodule of genuine equivariant decomposable elements}.
\end{definition}
%For defining the K\"ahler differentials in a Tambara functor, it is helpful to work more generally, defining various forms of K\"ahler differentials for $\cO$-Tambara functors \cite{BHIncomplete}.
%
%\begin{definition}
%If $\m{I}$ is a non-unital $\cO$-Tambara functor, then let 
%\[
%\m{I}^{>1}_{\mathcal O}=\sum_{H\subset G, T\in\cO(G/H), |T|>1} \Ind_H^G N^T i_H^\ast \m{I}\subset \m{I}
%\]
%be the sub-module of $\cO$-decomposable elements.
%\end{definition}
%
\begin{proposition}
For any non-unital Tambara functor $\m{I}$ in $\m{R}$-modules, $\m{I}^{>1}$ is a Tambara ideal of $\m{I}$.
%
%Let $\cO$ have the additional feature that if $H/K$ is an admissible $H$-set for $\cO$ then $J/K$ is an admissible $J$-set for all $H\subset J\subset G$. Then $\m{I}^{>1}_{\cO}$ is a Tambara ideal of $\m{I}$.
\end{proposition}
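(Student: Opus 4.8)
The plan is to verify directly that $\m{I}^{>1}$ is closed under each of the operations defining a sub-non-unital $\m{R}$-Tambara functor: the $\m{R}$-module structure, the restriction and transfer maps of the underlying Mackey functor, and the norm maps $N_f$ for surjections $f$. Two of these are essentially built into the definition. Because $\m{I}^{>1}$ is defined as a sum of images of $\m{R}$-module maps (equivalently, it is the $\m{R}$-submodule generated by the genuine norms), closure under the $\m{R}$-action is immediate; this point is not vacuous, since the additive span of the generators alone need not be an $\m{R}$-submodule. Closure under transfers is equally formal: the generators are themselves images of induction, so a transfer of a generator is again induced, by transitivity $\Ind_K^G\Ind_H^K=\Ind_H^G$ together with the fact that the defining sum ranges over all subgroups $H$; the transfer of an $\m{R}$-multiple of a generator is then handled by Frobenius reciprocity $a\cdot T_f(b)=T_f(R_f(a)\cdot b)$, reducing it to a transfer of a generator at a smaller level.

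The first genuine computation is closure under restriction. Here I would take a generating element, of the form $r\cdot\tr_H^G N^T(x)$, and restrict it. The double-coset (Mackey) formula rewrites the restriction of the transfer $\tr_H^G$ as a sum of transfers of conjugates of restrictions, so it suffices to understand how restriction interacts with a genuine norm $N^T$. This is exactly governed by the pullback formula $R_g\circ N_f=N_{f'}\circ R_{g'}$: the restriction of $N^T$ is again a norm, indexed by the base-changed set. Since base change preserves the cardinality of fibers, the index still has more than one element, so a genuine norm restricts to a genuine norm; reassembling, the restriction of a generator lands back in $\m{I}^{>1}$.

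The main obstacle is closure under the norm maps. Given a surjection $f\colon V\to W$ and a generator $r\cdot\tr_H^G N^T(x)$, the multiplicativity relation $N_f(r\cdot y)=N_f(r)\cdot N_f(y)$ strips off the scalar $N_f(r)\in\m{R}(W)$ and reduces the problem to showing that $N_f$ of a (transferred) genuine norm is again genuine. For a pure norm this is just the statement that $N_f\circ N^T$ is the norm along a composite map, whose fibers are assembled from those of $f$ and of $T$ and hence still have size $>1$. The subtle case is $N_f\circ\tr_H^G$, which must be expanded using the exponential-diagram formula $N_g\circ T_h=T_{h'}\circ N_{g'}\circ R_{f'}$. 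This rewrites the norm of a transferred norm as a transfer of a norm of a restriction of $N^T(x)$, and the heart of the argument is the bookkeeping showing that genuineness survives: even on the summands where the outer norm index degenerates to an isomorphism, the inner index $T$ with $|T|>1$ persists through the restriction, so every surviving term is a transfer of a genuine norm. I expect this fiber-cardinality analysis through the exponential diagram — separating the summands that remain genuine from those that collapse — to be the crux, exactly as the analogous term-by-term analysis controlled the norm-of-a-sum computations earlier in the paper.
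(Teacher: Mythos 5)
Your overall strategy --- element-wise verification of closure under the $\m{R}$-action, transfers, restrictions, and norms --- is the same in spirit as the paper's (very terse) proof, and most of the individual pieces are sound: the sum-of-images definition gives the module and transfer closure, base change preserves fiber cardinality so a restriction of a genuine norm is again a genuine norm, and for a single generator the multiplicativity relation, the composite-norm observation, and the exponential-diagram formula together control $N_f$. (One small wobble: Frobenius reciprocity rewrites an $\m{R}$-multiple of a transfer as a transfer of an $\m{R}$-multiple, not the reverse reduction you describe; the cleaner statement is that each image $\Ind_H^G N^T i_H^\ast\m{I}\to\m{I}$ is simultaneously a sub-Mackey functor and an $\m{R}$-submodule, so the sum of images is too.)

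The genuine gap is in the norm step: you only verify that $N_f$ carries \emph{generators} of $\m{I}^{>1}$ back into $\m{I}^{>1}$. But $\m{I}^{>1}$ is by definition a sum of images, so a general element is a finite sum $y_1+\dots+y_n$ of such generators, and the norm maps are not additive: $N_f(y_1+\dots+y_n)$ is not $N_f(y_1)+\dots+N_f(y_n)$. Closure on generators therefore does not imply closure on $\m{I}^{>1}$, and this non-additivity is precisely the characteristic subtlety of ideals in Tambara functors. To finish, you must invoke the universal formula for the norm of a sum (Proposition~\ref{prop:NormofSum}, refined in Proposition~\ref{prop:NormofSumII}): it expands $N_f(y_1+\dots+y_n)$ into a sum of transfers of norms of restrictions of the $y_i$, and one then checks that every cross-term retains at least one factor which is (a restriction of) a genuine norm, hence lies in $\m{I}^{>1}$ by the closure under restrictions, products, and transfers already established. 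This is exactly the step the paper's proof is pointing at when it says that the universal formulae for norms of sums and of transfers preserve the underlying cardinality of the exponents, so that ``linear combinations are also still in this collection.'' Ironically, you cite the norm-of-a-sum analysis as an analogy for your norm-of-transfer bookkeeping, but it is needed as an actual step of the argument, not just an analogy; with it added, your proof becomes a correct and considerably more detailed version of the paper's.
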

\begin{proof}
Interpreting the norm as a generalized product over a possibly non-trivial $G$-set, we see that $\m{I}^{> 1}$ is the sub-Mackey functor generated by possible products with more than one factor. This is visibly closed under products by elements in $\m{I}$ and by products in itself. The universal formulae for norms of sums and of transfers also preserves the underlying cardinality of the exponents, showing that linear combinations are also still in this collection.
%
%The latter condition is a simple restatement of the fact that for any $H\subset J$, if $N_K^H(a)\in\m{I}^{>1}_{\cO}$, then 
%\[
%N_H^J\big(N_K^H(a)\big)=N_K^J(a)
%\]
%is again in $\m{I}^{>1}_{\cO}$, showing that this is closed under arbitrary norms.
\end{proof}

\begin{proposition}
If $f\colon\m{B}\to\m{R}$ is a map of Tambara functors, then the kernel of $f$ is a non-unital Tambara functor.
\end{proposition}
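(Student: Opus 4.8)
The plan is to realize $\m{K}:=\ker(f)$ as a product-preserving subfunctor of the restriction of $\m{B}$ to $\cP^{G}_{Epi}$, which is exactly the data of a non-unital (semi-)Tambara functor; group-completeness is then automatic because each $\m{K}(T)$ is a subgroup of the abelian group $\m{B}(T)$. First I would set $\m{K}(T)=\ker\big(f_T\colon\m{B}(T)\to\m{R}(T)\big)$ at every finite $G$-set $T$. Since $f$ is in particular a map of Mackey functors and Mackey functors form an abelian category, $\m{K}$ is a sub-Mackey functor of $\m{B}$: it is closed under the restriction maps $R_f$ and the transfer maps $T_h$, since these are additive and commute with $f$, hence carry $0$ to $0$. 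Next, $\m{K}$ is an ideal for the level-wise multiplication, because $f_T(a)=0$ forces $f_T(a\cdot b)=f_T(a)f_T(b)=0$ for every $b$; so $\m{K}$ inherits a non-unital Green functor structure, with Frobenius reciprocity coming for free from $\m{B}$. Finally $\m{K}$ preserves coproducts, $\m{K}(T_1\amalg T_2)\cong\m{K}(T_1)\times\m{K}(T_2)$, since kernels commute with finite products.

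The one point requiring a genuine argument, and the main obstacle, is closure under the norm maps $N_g$ indexed by an epimorphism $g\colon T\to T'$. Because $f$ respects norms, for $a\in\m{K}(T)$ we have $f_{T'}(N_g a)=N_g(f_T a)=N_g(0)$, so everything reduces to showing $N_g(0)=0$ in $\m{R}$ whenever $g$ is surjective. Decomposing the target into orbits and using transitivity reduces this to the assertion $N_H^K(0)=0$ for $H\subset K$. I would establish this by induction on the order of the group, using the norm-of-a-sum formula of Proposition~\ref{prop:NormofSumII}: setting one input equal to $0$ and cancelling the term $N_H^K(a)$ leaves the identity $0=N_H^K(0)+\sum_{k=1}^{[K:H]-1}T_{f_k}N_{g_k}R_{h_k}(a,0)$. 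For $0<k<[K:H]$ no function in $T_k$ is $K$-fixed, so each summand is a transfer from a \emph{proper} subgroup of a norm containing a zero factor, which vanishes by the inductive hypothesis; hence $N_H^K(0)=0$. Conceptually this is just the statement that a norm along a surjection is a product over its (nonempty) fibers, and a product with a zero factor vanishes.

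I would then emphasize that surjectivity is essential here: for a non-surjective $g$ the empty fibers contribute the empty product $1$, so $N_g$ does not preserve the kernel---indeed $1\notin\m{K}$ as soon as $f$ is unital. This is precisely why $\m{K}$ is only a \emph{non-unital} Tambara functor and why the natural domain is $\cP^{G}_{Epi}$ rather than all of $\cP^{G}$. Having verified closure under $R_f$, $T_h$, the norms $N_g$ along epimorphisms, and products, $\m{K}$ is a product-preserving, group-complete subfunctor of $\m{B}|_{\cP^{G}_{Epi}}$; the relations defining a non-unital Tambara functor, including the universal formulae for norms of sums and of transfers, then hold automatically in $\m{K}$ because they hold in $\m{B}$ and every term of each formula already lies in $\m{K}$. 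This completes the argument.
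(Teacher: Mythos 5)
Your proof is correct, but it takes a genuinely different route from the paper's. The paper's argument is purely categorical and takes two lines: the zero map $\m{B}\to\m{R}$ is a map of non-unital Tambara functors, the kernel of $f$ is the equalizer of $f$ and the zero map, and the category of non-unital Tambara functors is complete, so this equalizer is again a non-unital Tambara functor. You instead build the structure on $\ker(f)$ by hand, and the computational heart of your argument---that $N_g(0)=0$ in $\m{R}$ for every epimorphism $g$---is precisely the fact that the paper absorbs, in a single unproved sentence, into the assertion that the zero map is a morphism of non-unital Tambara functors. Your version therefore surfaces content the paper leaves implicit; in particular, your closing observation about empty fibers (for non-surjective $g$ the empty fibers contribute empty products equal to $1$) is the real explanation of why the kernel carries operations only from $\cP^{G}_{Epi}$, i.e., why one gets a non-unital Tambara functor and nothing more. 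One simplification is available to you: rather than proving $N_H^K(0)=0$ by induction on the group order via Proposition~\ref{prop:NormofSumII} (which does work, but requires restricting each intermediate term to orbits $K/L$, noting that $L$ is a proper stabilizer, and factoring the resulting norm in $i_L^\ast\m{R}$ so that the inductive hypothesis applies), observe that $0\in\m{R}(S)$ is the transfer along $\emptyset\to S$ of the unique element of $\m{R}(\emptyset)=\ast$, and that the dependent product $\prod_g\emptyset$ is the complement of the image of $g$, hence empty exactly when $g$ is surjective; the exponential-diagram axiom then rewrites $N_g\circ T_{\emptyset\to S}$ as a transfer along $\prod_g\emptyset\to T$, giving $N_g(0)=T_{\emptyset\to T}(\ast)=0$ in one line. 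That computation is the precise form of your slogan that a norm along a surjection is a product over nonempty fibers, and it replaces the entire inductive step.
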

\begin{proof}
The zero-map is a map of non-unital Tambara functors. Since the kernel is the equalizer of $f$ and the zero map and since the category of non-unital Tambara functors is complete, the kernel is a non-unital Tambara functor. 
\end{proof}

\begin{definition}\label{def:GenuineKahler}
Let $\m{S}$ be a Tambara functor and let $\m{R}$ be a Tambara functor under $\m{S}$. Let $\m{I}$ denote the kernel of the multiplication map
\[
\m{R}\Boxover{\m{S}} \m{R}\to\m{R}.
\]

The $\m{R}$-module
\[
\OmegaOneG:= \m{I}/\m{I}^{>1},
\]
is defined to be the module of {\defemph{genuine K\"{a}hler differentials}}, and let 
\[
d\colon \m{R}\to \OmegaOneG
\]
be the difference between the left and right inclusions $\m{R}\to\m{R}\Boxover{\m{S}}\m{R}$.
\end{definition}

%As $\cO$ varies, the assignment 
%\[
%\cO\mapsto\OmegaOneO
%\]
%gives a functor from the poset of indexing systems to the category of $\m{R}$-modules and surjections. In particular, there are two special cases we single out.
%
%\begin{definition}
%If $\cO=\cO^{tr}$, then we will call $\OmegaOneO$ the module of {\defemph{Green K\"{a}hler differentials}}.
%
%If $\cO=\cO^{all}$, then we will call $\OmegaOneO$ the modules of {\defemph{Genuine K\"{a}hler differentials}}. We will also denote this  $\OmegaOneG$.
%\end{definition}
%
%The Green K\"ahler differentials are initial amongst all $\OmegaOneO$, while the genuine ones are terminal. The following is immediate from the construction.
%
%\begin{proposition}
%The module of Green K\"{a}hler differentials for $\m{R}$ over $\m{S}$ is the ordinary module of K\"{a}hler differentials for the underlying Green functors.
%\end{proposition}

\begin{proposition}
The $\m{R}$-module $\OmegaOneG$ is generated by the image of $d$.
\end{proposition}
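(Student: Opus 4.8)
The plan is to transcribe the classical proof that $I/I^2$ is generated as an $R$-module by the differentials $db=1\otimes b-b\otimes 1$, while bookkeeping the transfer and norm structure. Write $\lambda,\rho\colon\m{R}\to\m{R}\Boxover{\m{S}}\m{R}$ for the left and right inclusions, let $m$ be the multiplication, and set $\tilde d=\rho-\lambda$; since $m\lambda=m\rho=\id$, this lifts $d$ to a map landing in $\m{I}=\ker(m)$. Let $\m{K}\subseteq\OmegaOneG$ be the sub-$\m{R}$-module generated by the image of $d$. A sub-$\m{R}$-module of a Mackey functor equals the whole functor precisely when it does so at every level, so it suffices to prove that each $\OmegaOneG(G/H)$ lies in $\m{K}(G/H)$; equivalently, that every element of $\m{I}(G/H)$ lies, modulo $\m{I}^{>1}(G/H)$, in the sub-$\m{R}$-module generated by the image of $\tilde d$.

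First I would use that, regarded as a Tambara functor under $\m{R}$ via $\lambda$, the coproduct $\m{R}\Boxover{\m{S}}\m{R}$ is generated by the image of $\rho$: this is the Tambara-functor analogue of ``$R\otimes_S R$ is generated as an $R$-algebra by $1\otimes R$.'' Combined with the commutation relations among restriction, norm, and transfer recalled in Section~2, every element of $(\m{R}\Boxover{\m{S}}\m{R})(G/H)$ can be put in a normal form as a sum of transfers of norms of products of restricted $\lambda$- and $\rho$-generators. This normal form is what lets me analyze an arbitrary element of $\m{I}(G/H)$ one generator-type at a time.

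The core of the argument is to check that, modulo $\m{I}^{>1}$, each type of generator contributes only $\m{R}$-multiples and transfers of elements $\tilde d(b)$. The norm generators are exactly where $\m{I}^{>1}$ does its work: any norm $N^T$ with $|T|>1$ of an element of $\m{I}$ lies in $\Ind_H^G N^T i_H^\ast\m{I}\subseteq\m{I}^{>1}$ and hence dies in $\OmegaOneG$; in particular the case $T=\ast\amalg\ast$ shows products of two elements of $\m{I}$ vanish, so $\OmegaOneG$ is square-zero. The product generators are handled by the classical identity, which holds exactly in $\m{R}\Boxover{\m{S}}\m{R}$: if $\sum_i\lambda(a_i)\rho(b_i)\in\m{I}(G/H)$, so that $\sum_i a_ib_i=0$, then because $\bigl(\sum_i a_ib_i\bigr)\otimes 1=0$ one has $\sum_i\lambda(a_i)\rho(b_i)=\sum_i\lambda(a_i)\,\tilde d(b_i)$, an $\m{R}$-linear combination of $\tilde d$-images. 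Finally, the transfer generators are controlled by Frobenius reciprocity: the external transfers are $\m{R}$-module maps and $\m{K}$ is by construction closed under them, so any term that has been rewritten inside $\m{K}$ at a lower level remains in $\m{K}$ after transferring to $G/H$. Assembling the three cases shows $\m{I}(G/H)\subseteq\m{K}(G/H)+\m{I}^{>1}(G/H)$, which is the desired claim.

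The main obstacle I anticipate is not any single computation but the compatibility of the level-wise classical identity with the Day-convolution structure of the box product. Concretely, a general element of $\m{I}(G/H)$ in normal form is a sum of transfers $\tr_K^H(z_K)$ together with an untransferred product part, and the kernel condition $\sum a_ib_i=0$ is imposed on the total sum at level $G/H$, not on each $z_K$ individually; so the classical rewriting cannot be applied summand-by-summand without first reorganizing. The delicate step is therefore to push the $|T|>1$ norm terms into $\m{I}^{>1}$ and then force the surviving product-and-transfer terms to meet the kernel condition level-by-level, so that the exact identity for $\tilde d$ can be invoked within each transfer. I expect this bookkeeping at the interface of the classical level-wise argument with the genuinely equivariant transfer and norm operations to be the crux.
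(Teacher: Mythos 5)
Your overall strategy---transcribe the classical generation argument levelwise and let $\m{I}^{>1}$ absorb the genuinely equivariant terms---is in the spirit of the paper's proof, but the write-up leaves the actual content undone, and both places where you defer are places where the argument as you state it does not go through. First, the norm terms: you propose to kill them because $N^T$ of an element of $\m{I}$ lands in $\m{I}^{>1}$, but in your normal form the arguments of the norms are products of $\lambda$- and $\rho$-generators, which are generally \emph{not} in $\m{I}$, so that observation does not apply to them. This is fixable: since $\lambda$ and $\rho$ are maps of Tambara functors and norms are multiplicative, $N_K^H\bigl(\lambda(a)\rho(b)\bigr)=\lambda(N_K^H a)\,\rho(N_K^H b)$, so norms can be absorbed into the generators and never appear. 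The paper sidesteps this entirely: for \emph{additive} generation one only needs that the levelwise tensor products $\m{R}(G/K)\otimes_{\m{S}(G/K)}\m{R}(G/K)$ generate $\m{R}\Box_{\m{S}}\m{R}$ as a Mackey functor (the Day-convolution description), i.e., generation under transfers alone, with no norms in sight.

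Second, and more seriously, the step you yourself flag as the crux---that the kernel condition $\sum_i a_ib_i=0$ holds only for the total element at level $G/H$ and not for the individual transferred summands $z_K$---is never resolved, and ``force the surviving product-and-transfer terms to meet the kernel condition level-by-level'' is a restatement of the problem, not an argument. The resolution is to prove a statement about all of $\m{R}\Box_{\m{S}}\m{R}$ rather than about $\m{I}$, which is what ``copy the classical proof'' amounts to here. Writing each elementary tensor as $a\otimes b=\lambda(ab)+\lambda(a)\tilde d(b)$ shows every elementary tensor lies in $\lambda(\m{R})+\m{K}'$, where $\m{K}'\subseteq\m{I}$ is the $\m{R}$-submodule of $\m{R}\Box_{\m{S}}\m{R}$ generated by the image of $\tilde d$; since $\lambda(\m{R})$ and $\m{K}'$ are sub-Mackey functors they are closed under transfers, so $\m{R}\Box_{\m{S}}\m{R}=\lambda(\m{R})+\m{K}'$ at every level. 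Now given $x\in\m{I}(G/H)$, write $x=\lambda(r)+k$ with $k\in\m{K}'(G/H)$ and apply the multiplication map $m$: since $m\lambda=\mathrm{id}$ and $\m{K}'\subseteq\m{I}$, we get $0=m(x)=r$, hence $x=k\in\m{K}'(G/H)$. (Equivalently, in your normal form, replace each summand $z_K$ by $z_K-\lambda\bigl(m(z_K)\bigr)$; this is the reorganization you were missing.) Note this proves the stronger statement that $\m{I}$ itself is generated over $\m{R}$ by the image of $\tilde d$, so the passage to the quotient by $\m{I}^{>1}$ (or by $\m{I}^2\subseteq\m{I}^{>1}$, as in the paper's reduction) is then immediate and your careful tracking of which terms die in $\OmegaOneG$ becomes unnecessary.
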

\begin{proof}
It suffices to prove the simpler, Green functor version of this statement, where we let $\m{I}^2$ simply be the usual box-square of $\m{I}$ and show that $\m{I}/\m{I}^2$ is generated by the corresponding image of $d$. Since $\m{I}^2\subset\m{I}^{>1}$, this implies our result.

Here, we copy the classical argument. The collection $\m{R}(G/H)\otimes_{\m{S}(G/H)} \m{R}(G/H)$ for all $H\subset G$ generates $\m{R}\Box_{\m{S}}\m{R}$ as a Mackey functor. The map $\m{R}\Boxover{\m{S}}\m{R}\to\m{R}$ is a map of Tambara functors, and 
\[
\m{R}\xrightarrow{\eta_L-\eta_R}\m{R}\Boxover{\m{S}}\m{R}
\]
is a map of Mackey functors. Since the ordinary tensor products generate as Mackey functors, we can simply copy the classical proof, giving the result.
\end{proof}

\begin{lemma}\label{lem:UniversalDerivation}
The map $d\colon \m{R}\to\OmegaOneG$ is a genuine $\m{S}$-derivation.
\end{lemma}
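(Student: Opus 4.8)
The plan is to verify the three defining conditions of a genuine $\m{S}$-derivation (Definition~\ref{def:GenuineDerivation}) for the map $d$, which is the reduction modulo $\m{I}^{>1}$ of the Mackey functor map $\eta_L-\eta_R\colon\m{R}\to\m{R}\Boxover{\m{S}}\m{R}$. Throughout I will use that both inclusions $\eta_L,\eta_R\colon\m{R}\to\m{R}\Boxover{\m{S}}\m{R}$ are maps of Tambara functors and that, by construction of the relative box product, they agree on the image of $\m{S}$. Conditions (1) and (3) are formal; condition (2), the norm rule, is the heart of the argument and runs exactly parallel to the square-zero extension theorem of the previous section.

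The unit condition $d\circ\eta=0$ is immediate: since $\eta_L\circ\eta=\eta_R\circ\eta$ as maps $\m{S}\to\m{R}\Boxover{\m{S}}\m{R}$, their difference vanishes. For the Leibniz rule, arguing as in the proof of the preceding proposition the identity may be checked modulo $\m{I}^2\subseteq\m{I}^{>1}$, i.e.\ at the level of the underlying Green functors. There I copy the classical computation: expanding the product $\big(\eta_L(r_1)-\eta_R(r_1)\big)\cdot\big(\eta_L(r_2)-\eta_R(r_2)\big)\in\m{I}^2$ and reducing modulo $\m{I}^2$ yields $d(r_1 r_2)=r_1\,d(r_2)+d(r_1)\,r_2$, using that the left and right $\m{R}$-module structures on $\m{I}/\m{I}^2$ coincide because this module is annihilated by $\m{I}$.

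For the norm rule I first reduce to $K=G$ exactly as in the square-zero theorem, replacing $\m{R}$ by $i_K^\ast\m{R}$ so that $N_H^K$ becomes a top norm $N_H^G$. Because $\eta_L$ and $\eta_R$ are Tambara maps, $d(N_H^G a)$ is the image in $\OmegaOneG$ of $N_H^G\big(\eta_L(a)\big)-N_H^G\big(\eta_R(a)\big)$. Writing the lift of $d(a)$ as $\delta:=\eta_L(a)-\eta_R(a)\in\m{I}(G/H)$, so that $\eta_L(a)=\eta_R(a)+\delta$, I apply the norm-of-a-sum formula (Proposition~\ref{prop:NormofSumII}) to the two summands $\eta_R(a)$ and $\delta$:
\[
N_H^G\big(\eta_R(a)+\delta\big)=\sum_{k=0}^{[G:H]} T_{f_k} N_{g_k} R_{h_k}\big(\eta_R(a),\delta\big).
\]
The $k=0$ term is $N_H^G(\eta_R(a))$ and cancels. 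For $k\geq 2$, Proposition~\ref{prop:CoveringSpaces} shows that $g_k$ is $k$-to-one on each summand, so $N_{g_k}R_{h_k}$ produces a norm over a set of cardinality $\geq 2$ of a collection of factors including at least two copies of $\delta\in\m{I}$; such an element is a genuine equivariant decomposable and therefore lies in $\m{I}^{>1}$. Hence all terms with $k\geq 2$ die in the quotient $\OmegaOneG$, and only the $k=1$ term survives.

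It then remains to identify the surviving $k=1$ term, which is the computation already carried out in the square-zero theorem: with $T_1\cong G/H$ one finds
\[
T_{f_1} N_{g_1} R_{h_1}\big(\eta_R(a),\delta\big)=\tr_H^G\big(N_{d_2}R_{d_1}(\eta_R(a))\cdot\delta\big).
\]
Since $\eta_R$ is a map of Tambara functors, $N_{d_2}R_{d_1}(\eta_R(a))=\eta_R\big(N_{d_2}R_{d_1}(a)\big)$, and because $\OmegaOneG$ is annihilated by $\m{I}$ its $\eta_R$-module structure agrees with the ambient $\m{R}$-module structure; projecting to $\OmegaOneG$ therefore gives $\tr_H^G\big(N_{d_2}R_{d_1}(a)\cdot d(a)\big)$, which is precisely condition (2). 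The main obstacle is the bookkeeping in the previous paragraph: verifying that every $k\geq 2$ term genuinely lands in $\m{I}^{>1}$ — the submodule of genuine equivariant decomposables is defined exactly so that this holds, playing here the role that the vanishing of norms in a square-zero ideal played before — and confirming that the reduction to $K=G$ is compatible with the formation of $\m{I}^{>1}$. Alternatively, one could identify $(\m{R}\Boxover{\m{S}}\m{R})/\m{I}^{>1}$ with the square-zero extension $\m{R}\semidirect\OmegaOneG$ and deduce the lemma directly from the square-zero theorem, but this merely repackages the same computation.
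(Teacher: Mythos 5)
Your proposal is correct, but it takes a genuinely different route from the paper, whose proof is essentially two lines: the left unit splits the sequence $0\to\m{I}\to\m{R}\Boxover{\m{S}}\m{R}\to\m{R}\to 0$, this splitting identifies $(\m{R}\Boxover{\m{S}}\m{R})/\m{I}^{>1}\cong\m{R}\semidirect\OmegaOneG$ as Tambara functors augmented over $\m{R}$, and then the square-zero extension theorem of the previous section applies: the right unit is a Tambara functor map into a square-zero extension whose $\m{R}$-component is the identity, so its $\OmegaOneG$-component, which is $\pm d$, is a genuine $\m{S}$-derivation (and $d\circ\eta=0$ because the two units agree on $\m{S}$). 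In other words, the ``alternative'' you dismiss in your final sentence as mere repackaging is exactly the paper's argument. What you do instead is inline the computation from the proof of that theorem, rerunning Propositions~\ref{prop:NormofSumII} and~\ref{prop:CoveringSpaces} inside $\m{R}\Boxover{\m{S}}\m{R}$, with ``the $k\geq 2$ terms are transfers of norms of $\delta\in\m{I}$ along $k$-fold covers, hence genuine equivariant decomposables lying in $\m{I}^{>1}$'' playing the role that the vanishing of such norms on a square-zero module played before; this is sound (it uses that $\m{I}^{>1}$ is a Tambara ideal, so closed under ambient multiplication and transfers), and it has the merit of exhibiting concretely why the ideal of genuine decomposables $\m{I}^{>1}$, rather than the ordinary square $\m{I}^2$, is what must be killed. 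What the paper's route buys is that both bookkeeping points you flag at the end evaporate: the $k$-indexed analysis is done once, inside the square-zero theorem, and the reduction to $K=G$ happens there too, where it only needs the evident identification $i_K^\ast(\m{R}\semidirect\mM)\cong i_K^\ast\m{R}\semidirect i_K^\ast\mM$ rather than compatibility of $i_K^\ast$ with the formation of $\m{I}^{>1}$ — a real (if routine, via the double coset formula) verification that your version leaves open.
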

\begin{proof}
The sequence of $\m{R}$-modules
\[
0\to\m{I}\to \m{R}\Boxover{\m{S}}\m{R}\to\m{R}\to 0
\]
is split by the left unit. This splitting gives an identification
\[
(\m{R}\Boxover{\m{S}}\m{R})/\m{I}^{>1}\cong\m{R}\semidirect\OmegaOneG
\]
of Tambara functors augmented over $\m{R}$. The map $d$ is the difference between the left and right units, and since both the left and right units are maps of Tambara functors, $d$ is a genuine derivation. Since the box product is over $\m{S}$, both the left and right units agree on $\m{S}$, and hence $d$ is a genuine $\m{S}$-derivation.
\end{proof}

\begin{theorem}
If $\mM$ is an $\m{R}$-module, then there is a natural isomorphism
\[
\Der^{1,G}_{\m{S}}(\m{R},\mM)\cong \Hom_{\m{R}}(\OmegaOneG,\mM).
\]
\end{theorem}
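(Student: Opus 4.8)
The plan is to reproduce the classical universal-property bijection, building its two directions respectively from the universal derivation $d\colon\m{R}\to\OmegaOneG$ of Lemma~\ref{lem:UniversalDerivation} and from the theorem identifying maps into a square-zero extension with genuine derivations. In the forward direction, given an $\m{R}$-module map $\phi\colon\OmegaOneG\to\mM$ I set $\phi\mapsto\phi\circ d$. By Lemma~\ref{lem:UniversalDerivation} the map $d$ is a genuine $\m{S}$-derivation, and by part (2) of Proposition~\ref{prop:Naturality} post-composing with the $\m{R}$-module map $\phi$ again yields a genuine $\m{S}$-derivation, so this is a well-defined natural map $\Hom_{\m{R}}(\OmegaOneG,\mM)\to\Der^{1,G}_{\m{S}}(\m{R},\mM)$.

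For the inverse I would start from a genuine $\m{S}$-derivation $\partial\colon\m{R}\to\mM$. By the square-zero theorem above (applied with $\m{C}=\m{R}$ and $\epsilon=\id$), the Mackey map $\id\semidirect\partial\colon\m{R}\to\m{R}\semidirect\mM$ is a map of $\m{S}$-Tambara functors; the canonical inclusion $\iota=\id\semidirect 0$ is a second such map. Since $\m{R}\Boxover{\m{S}}\m{R}$ is the coproduct of two copies of $\m{R}$ in the category of $\m{S}$-Tambara functors, these assemble into a single map of $\m{S}$-Tambara functors $\Phi\colon\m{R}\Boxover{\m{S}}\m{R}\to\m{R}\semidirect\mM$ with $\Phi\circ\eta_L=\id\semidirect\partial$ and $\Phi\circ\eta_R=\iota$. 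Composing $\Phi$ with the augmentation $\m{R}\semidirect\mM\to\m{R}$ recovers the multiplication map, because both units become the identity; hence $\Phi$ restricts on kernels to a map of non-unital Tambara functors $\m{I}\to\mM$.

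The crux is to show this restriction annihilates $\m{I}^{>1}$, so that it descends to a map $\bar\Phi\colon\OmegaOneG=\m{I}/\m{I}^{>1}\to\mM$. This is exactly where the genuine (norm) structure enters: the augmentation ideal $\mM$ of the square-zero extension carries trivial products and trivial norms, so each generator $\Ind_H^G N^T i_H^\ast(x)$ of $\m{I}^{>1}$ with $|T|>1$ is sent by the non-unital Tambara map $\Phi|_{\m{I}}$ into $\Ind_H^G N^T i_H^\ast\bigl(\Phi(x)\bigr)=0$. For $\m{R}$-linearity of $\bar\Phi$ I would argue on the generating image of $d$ (using the proposition that $\OmegaOneG$ is $\m{R}$-generated by $d(\m{R})$): from $d(a)=[\eta_L(a)-\eta_R(a)]$ and the ring-map property of $\Phi$ one computes $\bar\Phi(d(a))=\partial(a)$, and the derivation identity then forces $\bar\Phi(r\cdot d(a))=r\cdot\partial(a)$, as required.

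Finally I would verify that the two assignments are mutually inverse and natural in $\mM$. One composite is immediate from $\bar\Phi\circ d=\partial$; for the other, any $\m{R}$-module map out of $\OmegaOneG$ is determined by its values on $d(\m{R})$ by the same generation proposition, and $\bar\Phi\circ d=\phi\circ d$ then forces $\bar\Phi=\phi$. Naturality in $\mM$ is formal from the functoriality of the square-zero extension and of $\Phi$. I expect the single genuine obstacle to be the vanishing on $\m{I}^{>1}$ — that is, confirming that quotienting by the genuine equivariant decomposables, rather than merely by the box-square $\m{I}^2$, is precisely what makes $\Phi$ factor; this is the equivariant phenomenon that distinguishes the present setting from the classical one, and everything else is a direct transcription of the classical argument.
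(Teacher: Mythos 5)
Your proposal is correct and takes essentially the same route as the paper: one direction by composing $\m{R}$-module maps with the universal derivation $d$ (via Lemma~\ref{lem:UniversalDerivation} and Proposition~\ref{prop:Naturality}), and the other by using the square-zero theorem to produce a map $\m{R}\Boxover{\m{S}}\m{R}\to\m{R}\semidirect\mM$, which kills $\m{I}^{>1}$ because $\mM$ has trivial norms and products, and whose induced map out of $\OmegaOneG$ is unique by the generation statement. The only cosmetic difference is that you invoke the coproduct property of $\Boxover{\m{S}}$ where the paper phrases the same construction as extension of scalars along $\m{S}\to\m{R}$.
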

\begin{proof}
By Corollary~\ref{lem:UniversalDerivation}, the map $d\colon\m{R}\to\OmegaOneG$ is a genuine derivation. Proposition~\ref{prop:Naturality} shows then that given any map of $\m{R}$-modules $\OmegaOneG\to\mM$, we can compose with $d$ to get a derivation into $\mM$.

For the other direction, let $d$ be a genuine derivation $\m{R}\to\mM$. Then $d$ induces a map of Tambara functors
\[
\m{R}\xrightarrow{Id\semidirect d} \m{R}\semidirect\mM
\]
augmented over $\m{R}$. By extending scalars over $\m{S}$ back to $\m{R}$, we get a map
\[
\m{R}\Boxover{\m{S}}\m{R}\to\m{R}\semidirect\mM
\]
of Tambara functors augmented over $\m{R}$, where the source is augmented by the multiplication map. In particular, the augmentation ideal $\m{I}$ maps to $\mM$. Since $\mM$ is equivariantly square zero, this map descends to a map 
\[
(\m{R}\Boxover{\m{S}}\m{R})/\m{I}^{>1}\cong \m{R}\semidirect\OmegaOneG\to \m{R}\semidirect\mM
\]
of Tambara functors augmented over $\m{R}$. This gives us a map of $\m{R}$-modules
\[
\Omega^{1,G}_{\m{R}/\m{S}}\to\mM.
\]
Since $\OmegaOneG$ is generated by the image of $d\colon \m{R}\to\OmegaOneG$, we know that this map is unique.
\end{proof}

\begin{corollary}
For any Tambara functor $\m{R}$ and any $\m{R}$-module $\mM$, the set $\Der_{\m{S}}^{1,G}(\m{R},\mM)$ has a natural extension to a Mackey functor whose value at $G/H$ is
\[
\Der_{i_H^\ast\m{S}}^{1,H}(i_H^\ast\m{R},i_H^\ast\mM).
\]
\end{corollary}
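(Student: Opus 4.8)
The plan is to promote the isomorphism of the preceding theorem to one of Mackey functors and then read off its levels. The category $\mRMod$ is closed symmetric monoidal under $\Boxover{\m{R}}$, so $\Hom_{\m{R}}(\OmegaOneG,\mM)$ is the top level of an internal mapping object $\underline{\Hom}_{\m{R}}(\OmegaOneG,\mM)$, which is again an $\m{R}$-module and hence a Mackey functor. Its restriction maps are induced by $i_H^\ast$, while its transfer maps exist because, as in the proof of Lemma~\ref{lem:MackeyUniqueness}, induction and coinduction agree for Mackey functors. Because restriction is closed symmetric monoidal it commutes with the internal hom, and evaluating at the top level $H/H$, where an internal hom computes ordinary module maps, gives the natural level formula
\[
\underline{\Hom}_{\m{R}}(\OmegaOneG,\mM)(G/H)\cong\Hom_{i_H^\ast\m{R}}\big(i_H^\ast\OmegaOneG,\,i_H^\ast\mM\big).
\]

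Thus it suffices to (a) identify $i_H^\ast\OmegaOneG$ with the genuine K\"ahler differentials of the restricted data and (b) apply the preceding theorem one subgroup down. For (b), note that $i_H^\ast\m{R}$ is an $i_H^\ast\m{S}$-Tambara functor and $i_H^\ast\mM$ an $i_H^\ast\m{R}$-module, so the theorem yields a natural isomorphism $\Hom_{i_H^\ast\m{R}}(\Omega^{1,H}_{i_H^\ast\m{R}/i_H^\ast\m{S}},i_H^\ast\mM)\cong\Der^{1,H}_{i_H^\ast\m{S}}(i_H^\ast\m{R},i_H^\ast\mM)$. Combined with the level formula this produces exactly the asserted value, so everything hinges on (a).

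For (a) I would show that $i_H^\ast$ commutes with each ingredient of Definition~\ref{def:GenuineKahler}. Restriction is strong symmetric monoidal for the box product and preserves relative box products, giving a natural isomorphism $i_H^\ast(\m{R}\Boxover{\m{S}}\m{R})\cong i_H^\ast\m{R}\Boxover{i_H^\ast\m{S}}i_H^\ast\m{R}$ compatible with the multiplication maps; since $i_H^\ast$ is exact, it carries the kernel $\m{I}$ to the augmentation ideal of the restricted multiplication. The real content is the identification $i_H^\ast\m{I}^{>1}=(i_H^\ast\m{I})^{>1}$ of decomposable submodules, and this is the main obstacle.

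For the inclusion $(i_H^\ast\m{I})^{>1}\subseteq i_H^\ast\m{I}^{>1}$, every subgroup $H'\subseteq H$ and every $H'$-set $T$ with $|T|>1$ already indexes a generator of $\m{I}^{>1}$, so the $H$-decomposable generators are among the restricted $G$-decomposable ones. For the reverse inclusion I would rewrite $i_H^\ast$ of a generating image $\Ind_{K}^G N^T i_K^\ast\m{I}$ using the double coset formula for the restriction of an induced Mackey functor together with the interchange relations between norms and restrictions, expressing it as a sum of terms $\Ind_{H'}^H N^{T'} i_{H'}^\ast$ with $H'\subseteq H$. The essential point, exactly the cardinality-preservation used in the proof that $\m{I}^{>1}$ is a Tambara ideal, is that restriction does not change the underlying cardinality of the indexing set: if $|T|>1$ then every $T'$ arising still has $|T'|>1$, so each summand lands in $(i_H^\ast\m{I})^{>1}$. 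Hence the two submodules agree, and exactness of $i_H^\ast$ gives $i_H^\ast\OmegaOneG\cong i_H^\ast\m{I}/(i_H^\ast\m{I})^{>1}\cong\Omega^{1,H}_{i_H^\ast\m{R}/i_H^\ast\m{S}}$, establishing (a). Finally, the naturality of all isomorphisms used ensures that these level identifications are compatible with the restriction and transfer maps, so they assemble into the desired Mackey functor structure on $\Der_{\m{S}}^{1,G}(\m{R},\mM)$.
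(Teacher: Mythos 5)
Your proof is correct in outline, but it takes a genuinely different route from the paper's. The paper obtains this corollary as an immediate formal consequence of its earlier machinery: by the preceding theorem, genuine $\m{S}$-derivations $\m{R}\to\mM$ are exactly maps of $\m{S}$-Tambara functors $\m{R}\to\m{R}\semidirect\mM$ augmented to $\m{R}$; since $\m{R}\semidirect(-)$ is strong $G$-symmetric monoidal, $\m{R}\semidirect\mM$ is a Mackey functor object, so the enriched hom $\m{S}\mhyphen\m{\Tamb}_{/\m{R}}(\m{R},\m{R}\semidirect\mM)$ carries a Mackey functor structure whose value at $G/H$ is, by the very definition of the enrichment and the restriction--coinduction adjunction (using $F(G/H,\m{R}\semidirect\mM)\cong\m{R}\semidirect\CoInd_H^G i_H^\ast\mM$), the set of $i_H^\ast\m{S}$-Tambara maps $i_H^\ast\m{R}\to i_H^\ast\m{R}\semidirect i_H^\ast\mM$ over $i_H^\ast\m{R}$; applying the square-zero theorem for the group $H$ identifies this set with $\Der^{1,H}_{i_H^\ast\m{S}}(i_H^\ast\m{R},i_H^\ast\mM)$, with no mention of differentials. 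You instead route everything through $\Hom_{\m{R}}(\OmegaOneG,\mM)$ and the closed structure on $\m{R}$-modules, which forces you to prove the extra compatibility $i_H^\ast\OmegaOneG\cong\Omega^{1,H}_{i_H^\ast\m{R}/i_H^\ast\m{S}}$, and in particular $i_H^\ast\m{I}^{>1}=(i_H^\ast\m{I})^{>1}$. That identification does hold, and your argument for it is the right one: the double coset formula expresses the restriction of each generating image $\Ind_K^G N^T i_K^\ast\m{I}$ as a sum of images $\Ind_{H'}^H N^{T'} i_{H'}^\ast$, and restriction of finite sets preserves cardinality, so $|T'|=|T|>1$ --- the same cardinality-preservation principle the paper invokes to show $\m{I}^{>1}$ is a Tambara ideal. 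But this is substantially more work than the paper needs, and it is the one step of your argument that remains a sketch (the interchange of norms with restriction across double cosets deserves a careful statement). What your route buys is a result of independent interest --- restriction commutes with genuine K\"ahler differentials, which would be needed anyway for base-change statements --- while the paper's route buys immediacy: the Mackey structure and the level formula come for free from representability, since the enrichment is literally defined levelwise by restriction to subgroups.
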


\begin{definition}
A map $\m{S}\to\m{R}$ of Tambara functors is {\defemph{formally \'etale}} if $\OmegaOneG=0$ and $\m{R}$ is a flat $\m{S}$-module.
\end{definition}

Just as classically, localizations are formally \'etale. Here, we can invert a set of elements that come from the value of the Tambara functor at various $G$-sets $T$ \cite{BHIncomplete}. We first show that localizations in Tambara functors are flat. 

\begin{proposition}\label{prop:LocalizationisFlat}
Let $S$ be a collection of elements from $\m{R}$. Then $\m{R}[S^{-1}]$ is a flat $\m{R}$-module.
\end{proposition}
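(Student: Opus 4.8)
The plan is to reduce flatness to the formal fact that a filtered colimit of free modules is flat in a Grothendieck abelian category, using the construction of the localization from \cite{BHIncomplete} to present $\m{R}[S^{-1}]$ as such a colimit.

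First I would recall that localizing a Tambara functor is not a level-wise operation: to invert $s\in\m{R}(G/H)$ while retaining the norm maps one must also invert its norms, since for $N_H^K$ to descend the element $N_H^K(s)$ must become a unit. Following \cite{BHIncomplete}, inverting $S$ is therefore the same as inverting the set of global elements $\bar S=\{N_H^G(s)\mid s\in S\cap\m{R}(G/H)\}\subset\m{R}(G/G)$ together with the multiplicative monoid they generate; since the double-coset formula for $\res^G_H N_H^G(s)$ exhibits $s$ as one of its factors, inverting $\bar S$ does invert all of $S$, so that $\m{R}[S^{-1}]\cong\m{R}[\bar S^{-1}]$. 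The point of passing to global elements is that multiplication by $u\in\m{R}(G/G)$ is a map of $\m{R}$-modules: at level $G/H$ it is multiplication by $\res^G_H(u)$, which commutes with restrictions on the nose and with transfers by Frobenius reciprocity. Ordering the monoid generated by $\bar S$ by divisibility, I would then identify $\m{R}[\bar S^{-1}]$ with the filtered colimit $\varinjlim_{u}\m{R}$ whose transition maps are these multiplication-by-$u$ module maps, each copy being the free $\m{R}$-module of rank one.

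It remains to assemble flatness formally. The free module $\m{R}$ is flat because $-\Box_{\m{R}}\m{R}$ is the identity functor. As $\Box_{\m{R}}$ is a left adjoint it commutes with the colimit, giving $(-)\Box_{\m{R}}\m{R}[\bar S^{-1}]\cong\varinjlim_{u}\big((-)\Box_{\m{R}}\m{R}\big)$; since the category of $\m{R}$-modules in Mackey functors is Grothendieck abelian its filtered colimits are exact, and a filtered colimit of the exact functors $(-)\Box_{\m{R}}\m{R}$ is again exact. Hence $\m{R}[S^{-1}]$ is flat.

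The genuinely equivariant input, and the main obstacle, is the reduction of the preceding paragraph: verifying through the localization theory of \cite{BHIncomplete} that the Tambara-functor localization coincides with the localization at the global norms $\bar S$ and is computed by this filtered colimit of free modules via module maps. Once that identification is secured the conclusion is formal, and the pitfall I must avoid is arguing level-wise, since level-wise localization does not by itself control the box product $\m{R}[S^{-1}]\Box_{\m{R}}(-)$, whose value at $G/H$ is a coend over the orbit category rather than a level-wise tensor product.
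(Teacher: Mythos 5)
Your architecture is the same as the paper's (reduce to global elements via the double coset formula, then present the localization as a filtered colimit of rank-one free $\m{R}$-modules along multiplication by global elements, so flatness follows formally), and both your reduction to $\bar S\subset\m{R}(G/G)$ and your concluding colimit-of-frees argument are fine. The gap is the middle identification: it is \emph{not} true in general that the Tambara-functor localization $\m{R}[\bar S^{-1}]$ is the module colimit $\varinjlim_u\m{R}$ over the multiplicative monoid generated by $\bar S$. The universal property forces more than $\bar S$ to become invertible: if $u$ becomes a unit then so does $\res_H^G(u)$, and since norms are multiplicative with $N(1)=1$, the elements $N_H^G\res_H^G(u)$ must become units as well, for every $H$; this is also exactly what is needed for the norm maps to descend to the level-wise localization. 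These norm classes need not divide any power of elements of the monoid generated by $\bar S$. Concretely, take $G=C_2$, $\m{R}=\m{A}$ the Burnside Tambara functor, and $\bar S=\{1+\omega\}$ where $\omega=[C_2]$, $\omega^2=2\omega$. Embedding $\m{A}(C_2/C_2)\hookrightarrow\Z\times\Z$ by $a+b\omega\mapsto(a,a+2b)$, your colimit $\m{A}\xrightarrow{\cdot(1+\omega)}\m{A}\to\cdots$ inverts $(1,3)$, but $N_e^{C_2}\res_e^{C_2}(1+\omega)=N_e^{C_2}(3)=3+3\omega=(3,9)$ is not invertible there, since $(3,9)$ would have to divide a power of $(1,3)$ and this fails in the first coordinate. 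So your colimit does not even carry norms extending those of $\m{A}$, and it is strictly smaller than the actual localization $\m{A}[(1+\omega)^{-1}]$, which must invert $3+3\omega$ too.

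This is precisely why the paper's transition maps are multiplication by the classes $N_e^G(\res_e^G(s))$ rather than by $s$ itself: the filtered system computing the Tambara localization must be indexed by a multiplicative subset of $\m{R}(G/G)$ that is closed under $u\mapsto N_H^G\res_H^G(u)$. (The paper is itself terse here; the honest statement is the smallest multiplicative set containing $\bar S$ and closed under norms of restrictions, since for example inverting only $N_e^{C_2}\res_e^{C_2}(\omega)=2+\omega$ would not invert $\omega$.) The repair preserves your structure: every element of this norm-closure is still a global element, multiplication by it is still an $\m{R}$-module map by Frobenius reciprocity, and then your final paragraph applies verbatim to the enlarged filtered system. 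But as written, the step you yourself flagged as ``the main obstacle'' is where the proof fails, and the fix requires the norm-closure, which is the genuinely equivariant content of the proposition.
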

\begin{proof}
If all elements of $S$ come from $\m{R}(G/G)$, then the localization $\m{R}[S^{-1}]$ can be formed as a filtered colimit of copies of $\m{R}$ along maps of the form $N_e^G(\res_e^G(s))$, where $s\in S$. In particular, this is flat. 

More generally, since we are forming the localization in Tambara functors, inverting any $s\in\m{R}(G/H)$ also inverts $N_H^G(s)$, and by the multiplicative double coset formula, inverting $N_H^G(s)$ also inverts $s$. In particular, it suffices to consider only localizations at a set of elements in $\m{R}(G/G)$ and the result follows.
\end{proof}

\begin{remark}
It was essential here that we could write any localization as a filtered colimit of free modules which in turn required that we could write any localization as one which inverts a collection of elements in $\m{R}(G/G)$. For any arbitrary Green or incomplete Tambara functor, this is no longer the case, so it is not obvious that localization is a flat operation here.
\end{remark}

\begin{remark}
One of the surprising consequences of the proof of Proposition~\ref{prop:LocalizationisFlat} is that the basic Zariski open sets in Nakaoka's spectrum of a Tambara functor arise by inverting elements in $\m{R}(G/G)$, rather than in any other level of the Tambara functor \cite{NakaokaCyclic}. This suggests a much more rigid behavior than initially expected.
\end{remark}

\begin{proposition}
If $S$ is a multiplicative subset in $\m{R}$, then $\m{R}\to \m{R}[S^{-1}]$ is formally \'etale.
\end{proposition}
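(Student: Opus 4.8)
The plan is to verify the two defining conditions separately. Flatness of $\m{R}[S^{-1}]$ as an $\m{R}$-module is exactly Proposition~\ref{prop:LocalizationisFlat}, so the real content is to show that the genuine K\"ahler differentials $\Omega^{1,G}_{\m{R}[S^{-1}]/\m{R}}$ vanish. For this I would use the universal property: since this module is generated by the image of the universal genuine $\m{R}$-derivation $d\colon\m{R}[S^{-1}]\to\Omega^{1,G}_{\m{R}[S^{-1}]/\m{R}}$ (the proposition just before Lemma~\ref{lem:UniversalDerivation}, together with Lemma~\ref{lem:UniversalDerivation} itself), it suffices to prove that this $d$ is identically zero, i.e.\ that $\ker(d)=\m{R}[S^{-1}]$.

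The key structural input is that $\ker(d)$ is a sub-Tambara functor of $\m{R}[S^{-1}]$, by the proposition asserting that the kernel of a genuine derivation is a sub-Tambara functor; in particular $\ker(d)$ is a sub-Green functor closed under all restriction maps. I would then exhibit enough elements in $\ker(d)$ to force it to be everything. First, $\ker(d)$ contains the image of $\m{R}$: this is condition (3), $d\circ\eta=0$, in Definition~\ref{def:GenuineDerivation}. Second, $\ker(d)$ contains the inverses of the localized elements: for $s\in S$ the ordinary Leibniz rule (condition (1)) applied to $s\cdot s^{-1}=1$ gives $0=d(1)=s\cdot d(s^{-1})+d(s)\cdot s^{-1}=s\cdot d(s^{-1})$, using $d(s)=0$; multiplying by $s^{-1}$ in the $\m{R}[S^{-1}]$-module $\Omega^{1,G}_{\m{R}[S^{-1}]/\m{R}}$ yields $d(s^{-1})=0$.

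To finish, I would invoke the reduction used in the proof of Proposition~\ref{prop:LocalizationisFlat}: by the multiplicative double coset formula, inverting any $s\in\m{R}(G/H)$ is the same as inverting $N_H^G(s)\in\m{R}(G/G)$, so there is no loss in assuming $S\subset\m{R}(G/G)$. With this reduction $\m{R}[S^{-1}]$ is a filtered colimit of copies of $\m{R}$ along multiplication maps, so every element of $\m{R}[S^{-1}](G/H)$ has the form $x\cdot\res_H^G(u^{-1})$ with $x\in\m{R}(G/H)$ and $u$ a finite product of elements of $S$. Each such element lies in $\ker(d)(G/H)$: the factor $x$ is in the image of $\m{R}$, the factor $\res_H^G(u^{-1})$ is the restriction of $u^{-1}\in\ker(d)(G/G)$ and so lies in $\ker(d)$ because that subfunctor is a sub-Green functor closed under restriction, and the product of two elements of the subring $\ker(d)(G/H)$ remains in it. Hence $\ker(d)=\m{R}[S^{-1}]$ and $d=0$.

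I expect the main obstacle to lie not in any single computation but in the equivariant bookkeeping of this last step: one must be certain that the image of $\m{R}$ together with the bottom-level inverses generates every level of the localized Tambara functor using only restrictions and products, with no residual genuine norm or transfer contributions. This is exactly what the reduction to inverting elements of $\m{R}(G/G)$, and the resulting description of the localization as a colimit of free modules, is engineered to guarantee; without it one would be forced to engage the norm-compatibility condition (2) of Definition~\ref{def:GenuineDerivation} head on, which is substantially more delicate.
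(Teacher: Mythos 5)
Your proof is correct, but it takes a genuinely different route from the paper's. The paper disposes of the differentials in one stroke: $\m{R}[S^{-1}]$ and $\m{R}[S^{-1}]\Boxover{\m{R}}\m{R}[S^{-1}]$ satisfy the same universal property (each is the universal $\m{R}$-Tambara functor in which the elements of $S$ become invertible), so the multiplication map is an isomorphism and the augmentation ideal $\m{I}$ is already zero; hence $\Omega^{1,G}_{\m{R}[S^{-1}]/\m{R}}=\m{I}/\m{I}^{>1}$ vanishes before one ever mentions derivations, norms, or the structure of the localization level by level. This is a strictly stronger conclusion ($\m{I}=0$, not merely its indecomposable quotient) obtained with no equivariant bookkeeping at all. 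Your argument is instead the equivariant promotion of the classical ``derivations annihilate localizations'' computation: you combine the generation of $\OmegaOneG$ by the image of $d$, the fact that $\ker(d)$ is a sub-Green functor, the Leibniz rule applied to $s\cdot s^{-1}=1$, and the reduction to $S\subset\m{R}(G/G)$ with its filtered-colimit description to sweep out every level of $\m{R}[S^{-1}]$. Each step is sound -- in particular you correctly observe that only the Green-functor part of the kernel proposition is needed, so condition (2) of Definition~\ref{def:GenuineDerivation} never has to be engaged -- and your diagnosis that the colimit description is what makes the level-by-level generation work is accurate. What your route buys is an explicit, element-level picture of why the universal derivation dies; what it costs is a dependence of the differentials computation on Proposition~\ref{prop:LocalizationisFlat} (the paper uses that proposition only for the flatness half) and a longer chain of supporting results where the paper needs only the universal property of localization.
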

\begin{proof}
Both $\m{R}[S^{-1}]$ and its box-square over $\m{R}$ satisfy the same universal property, so we conclude that the multiplication map
\[
\m{R}[S^{-1}]\Boxover{\m{R}} \m{R}[S^{-1}]\to\m{R}[S^{-1}]
\]
is an isomorphism. In particular, $\m{I}$ defined above is itself zero.
\end{proof}

\bibliographystyle{plain}

\bibliography{Omega1G}

\end{document}